\newcommand{\leqnomode}{\tagsleft@true\let\veqno\@@leqno}
\newcommand{\reqnomode}{\tagsleft@false\let\veqno\@@eqno}
\newcommand{\mylabel}[2]{#2\def\@currentlabel{#2}\label{#1}}
\setlist[description]{leftmargin=*}
\newcolumntype{L}{>{\RaggedRight\arraybackslash}X}
\definecolor{newblue}{RGB}{0,102,204}
\definecolor{newred}{RGB}{206,32,41}
\newtheorem{theorem}{Theorem}[section]
\newtheorem{proposition}[theorem]{Proposition}
\newtheorem{lemma}[theorem]{Lemma}
\newtheorem{corollary}[theorem]{Corollary}
\newtheorem*{A}{Main Theorem}
\theoremstyle{definition}
\newtheorem{remark}[theorem]{Remark}
\newtheorem{example}[theorem]{Example}
\newtheorem{definition}[theorem]{Definition}
\renewcommand{\setminus}{-}
\newcommand{\mZ}{\mathbb{Z}}
\renewcommand{\to}{\longrightarrow}
\renewcommand{\t}{\mathsf{t}}
\renewcommand{\r}{\mathsf{r}}
\newcommand{\z}{\mathsf{z}}
\newcommand{\x}{\mathsf{x}}
\newcommand{\y}{\mathsf{y}}
\renewcommand{\S}{\mathfrak{S}}
\newcommand{\bt}{\mathsf{\bar{t}}}
\newcommand{\br}{\mathsf{\bar{r}}}
\newcommand{\bz}{\mathsf{\bar{z}}}
\lstdefinestyle{customc}{
  belowcaptionskip=1\baselineskip,
  breaklines=true,
  %frame=L,
  xleftmargin=\parindent,
  language=GAP,
  showstringspaces=false,
  basicstyle=\footnotesize\ttfamily,
  keywordstyle=\bfseries\color{green!40!black},
  commentstyle=\itshape\color{purple!40!black},
  identifierstyle=\color{blue},
  stringstyle=\color{olive},
}
\title[Extra-special quotients of surface braid groups and double Kodaira fibrations]{Extra-special quotients of surface braid groups and double Kodaira fibrations with small signature}
\date{}
\author[Francesco Polizzi]{Francesco Polizzi $^{*}$} 
\address{Dipartimento di Matematica e Informatica
  \newline\indent
  Universit\`a della Calabria
  \newline\indent
Ponte Pietro Bucci 30B, I-87036 Arcavacata di Rende, Cosenza, Italy}
\email{polizzi@mat.unical.it}
\author[Pietro Sabatino]{Pietro Sabatino}
\address{Via Val Sillaro 5 \\ 00141 Roma, Italy}
\email{pietrsabat@gmail.com}
\thanks{\emph{2010 Mathematics Subject
Classification.} 14J29, 14J25, 20D15}
\keywords{Surface braid groups, extra-special $p$-groups, Kodaira fibrations}
\begin{document}

%\epigraph{\itshape Here we insert a nice citation, if
%we %find one.}{Author, \textit{citation}}

\begin{abstract}
  We study some special systems of generators on finite groups, introduced
  in previous work by the first author and called \emph{diagonal double
  Kodaira structures}, in order to investigate finite non-abelian
  quotients of
  the pure braid group on two strands $\mathsf{P}_2(\Sigma_b)$, where
  $\Sigma_b$ is a closed
  Riemann surface of genus $b$.
  In particular, we prove that, if a finite group $G$ admits a diagonal
  double Kodaira structure, then $|G|\geq 32$, and
  equality holds if and only if $G$ is extra-special. In the last section,
  as a geometrical application of our algebraic results, we construct two
  $3$-dimensional families of double Kodaira fibrations having signature $16$. Such surfaces are different from the ones recently constructed by Lee, L\"{o}nne and Rollenske and, as far as we know, they provide the first examples of positive-dimensional families of double Kodaira fibrations with small signature.
\end{abstract}

\maketitle

%\tableofcontents

%\epigraph{\itshape Here we insert a nice citation, if we find one.}
%{Author, \textit{citation}}

\setcounter{section}{-1}

\section{Introduction} \label{sec:intro}
A \emph{Kodaira fibration} is a smooth, connected holomorphic fibration
$f_1 \colon S \to B_1$, where $S$ is a compact complex surface and $B_1$
is a compact closed curve, which is not isotrivial (this means that not all
fibres are biholomorphic each other). The genus $b_1:=g(B_1)$ is called
the \emph{base genus} of the fibration, and the genus $g:=g(F)$, where
$F$ is any fibre, is called the \emph{fibre genus}. A surface $S$ that is
the total space of a Kodaira fibration is called a \emph{Kodaira fibred
surface}. For every Kodaira fibration, we have $b_1 \ge 2$ and $g \geq 3$,
see \cite[Theorem 1.1]{Kas68}. Since the fibration is smooth, the condition
on the base genus implies that $S$ contains no rational or elliptic curves;
hence $S$ is minimal and, by the sub-additivity of the Kodaira dimension,
it is of general type, hence algebraic.

An important topological invariant of a Kodaira fibred surface $S$ is its
\emph{signature} $\sigma(S)$, namely the signature of the intersection
form on the middle cohomology group $H^2(S, \, \mathbb{R})$. Actually, the
first examples of Kodaira fibrations (see \cite{Kod67}) were constructed
in order to show that $\sigma$ is not multiplicative for fibre bundles. In
fact, $\sigma(S)>0$ for every Kodaira fibration (see the introduction
to \cite{LLR17}), whereas $\sigma(B_1)=\sigma(F)=0$, hence $\sigma(S)
\neq \sigma(B_1)\sigma(F)$; by \cite{CHS57}, this in turn means that the
monodromy action of $\pi_1(B)$ on the rational cohomology ring $H^*(S, \,
\mathbb{Q})$ is non-trivial.

Every Kodaira fibred surface $S$ has the structure of a real surface
bundle over a smooth real surface, and so $\sigma(S)$ is divisible
by $4$, see \cite{Mey73}.  If, in addition, $S$ has a spin structure,
i.e. its canonical class is $2$-divisible in $\operatorname{Pic}(S)$,
then $\sigma(S)$ is a positive multiple of $16$ by Rokhlin's theorem,
and examples with $\sigma(S)=16$ are constructed in \cite{LLR17}. It is
not known whether there exists a Kodaira fibred surface with $\sigma(S)
\leq 12$.

Kodaira fibred surfaces are a source of fascinating ad deep questions at the
cross-road between the algebro-geometric properties of a compact, complex
surface
and the topological properties of the underlying closed, oriented
$4$-manifold. In fact, they can be studied by using, besides the usual
algebro-geometric methods, techniques borrowed from geometric topology such
as the Meyer signature formula,
the Birman-Hilden relations in the mapping class group and the subtraction
of Lefchetz fibrations, see \cite{En98, EKKOS02, St02, L17}. We refer
the reader to the survey paper \cite{Cat17} and the references contained
therein for further details.

The original examples by Kodaira (see for instance \cite[Chapter V, Section 14]{BHPV03}) and its variants described in \cite{At69,
Hir69} are obtained by taking cyclic covers of a product of curves $C \times D$, branched over a smooth divisor which is the disjoint union of a finite number of graphs of regular maps $C \to D$.
Thus, they come with two distinct Kodaira fibrations, namely the pull-backs of the two natural fibrations in $C \times D$ (followed by a Stein factorization, if necessary).  This leads to the following definition
of ``double" Kodaira fibration, see \cite{Zaal95, LeBrun00, BDS01, BD02,
CatRol09, Rol10, LLR17}:
\begin{definition}
  A \emph{double Kodaira surface} is a compact, complex surface $S$, endowed
  with a \emph{double Kodaira fibration}, namely a surjective, holomorphic map
  $f \colon S \to B_1 \times B_2$ yielding, by composition with the natural
  projections, two Kodaira fibrations $f_i \colon S \to B_i$, $i=1, \, 2$.
\end{definition}

In the sequel, we will describe our approach to the construction of double
Kodaira
fibrations based on the techniques introduced in \cite{CaPol19, Pol20},
and present our results. The main step is to ``detopologize" the problem,
by transforming it into a purely algebraic one. This will be done in the
particular case of \emph{diagonal} double Kodaira fibrations, namely,
Stein factorizations of finite Galois covers
\begin{equation} \label{eq:intro-diagonal}
  \mathbf{f} \colon S \to \Sigma_b \times \Sigma_b,
\end{equation}
branched with order $n \geq 2$ over the diagonal $\Delta \subset \Sigma_b
\times \Sigma_b$, where $\Sigma_b$ is a closed Riemann surface of genus
$b$. By Grauert-Remmert's extension theorem and Serre's GAGA,
the existence of a $G$-cover $\mathbf{f}$ as in \eqref{eq:intro-diagonal}, up
to cover isomorphisms, is equivalent to the existence of a group epimorphism
\begin{equation} \label{eq:intr-varphi}
  \varphi \colon \pi_1(\Sigma_b \times \Sigma_b - \Delta) \to G,
\end{equation}
up to automorphisms of $G$. Furthermore, the condition that $\mathbf{f}$
is branched of order $n$ over $\Delta$
is rephrased by  asking that $\varphi(\gamma_{\Delta})$ has order $n$
in $G$, where $\gamma_{\Delta}$ is the homotopy class in $\Sigma_b \times
\Sigma_b - \Delta$ of a loop in $\Sigma_b \times \Sigma_b$ that ``winds once"
around $\Delta$. The requirement $n \geq 2$ means that $\varphi$ does not
factor through $\pi_1(\Sigma_b \times \Sigma_b)$; it also implies that $G$
is non-abelian, because $\gamma_{\Delta}$ is a non-trivial commutator in
$\pi_1(\Sigma_b \times \Sigma_b - \Delta)$. An epimorphism (or quotient) of type \eqref{eq:intr-varphi} such that $\varphi(\gamma_{\Delta})$ is non-trivial will be called \emph{admissible}.

Recall now that the group $\pi_1(\Sigma_b \times \Sigma_b-\Delta)$ is
isomorphic to
$\mathsf{P}_2(\Sigma_b)$, the pure braid group of genus $b$ on two strands, which 
admits a finite geometric presentation with $4b+1$ generators, see  \cite[Theorem
7]{GG04}. Taking the images of these generators via an admissible group epimorphism, we get an ordered set
\begin{equation} \label{eq:intro-ddks}
  \mathfrak{S}=(\r_{11}, \, \t_{11}, \ldots, \r_{1b}, \, \t_{1b}, \, \r_{21},
  \, \t_{21}, \ldots, \r_{2b}, \, \t_{2b}, \, \z)
\end{equation}
of $4b+1$ generators of $G$, such that $o(\z)=n$ and subject to a suitable finite set of relations. This will be called
a \emph{diagonal double Kodaira structure} of type $(b, \, n)$ on $G$,
see Definition \ref{def:dks}. Thus, the 
geometric problem of constructing  an admissible $G$-cover is translated into the combinatorial-algebraic
problem of finding a diagonal double Kodaira structure of type $(b, \, n)$
in $G$.

It turns out that the $G$-cover $\mathbf{f}$ is a diagonal double Kodaira
fibration (namely, the two surjective maps $f_i \colon S \to \Sigma_b$,
obtained as composition with the natural projections, have connected fibres)
if and only if the related structure $\mathfrak{S}$ is \emph{strong}, an additional condition
introduced in  Definition \ref{def:strongdks}; furthermore, the algebraic
signature $\sigma(\mathfrak{S})$, introduced in Definition \ref{def:signature-ddks},
equals the geometric signature $\sigma(S)$. 

Note that not every double Kodaira fibration is of diagonal type. In fact,
  one proves that  if $S$
  is of diagonal type then its slope satisfies $\nu(S)=2+s$, where
  $s$ is rational and	$0 <s<6-4\sqrt{2}$, and that there exist examples whose high slope violates  this inequality (for instance, Catanese-Rollenske's example with $\nu(S)=2+2/3$); see  \cite[Section 4]{Pol20}. For more details on the construction of diagonal double Kodaira fibrations, we refer the reader to Section \ref{sec:ddkf}.

In the light of the previous considerations, classifying diagonal double Kodaira fibrations is equivalent to
describing finite groups which admit a diagonal double Kodaira structure. Our key result in this direction is the following: 

\begin{A}[see Propositions \ref{prop:less-32-no-ddks},
		\ref{prop:32-no-extra-special-no-structure} and Theorem
\ref{thm:main-algebraic}]
  Let $G$ be a finite group admitting a diagonal double Kodaira
  structure. Then  $|G| \geq 32$, with equality if and only if $G$
  is extra-special $($see $\operatorname{Section}$
  $\operatorname{\ref{sec:CCT} }$ for the definition$)$. Moreover, the
  following holds.
  \begin{itemize}
    \item[$\boldsymbol{(1)}$] Both extra-special groups $G$ of order $32$
      admit $2211840=1152 \cdot 1920$ diagonal double Kodaira structures of
      type $(b, \, n)=(2, \, 2)$. Every such a structure
      $\mathfrak{S}$ is strong and satisfies $\sigma(\mathfrak{S})=16$.
    \item[$\boldsymbol{(2)}$] If $G=G(32, \, 49)=\mathsf{H}_5(\mathbb{Z}_2)$,
      these structures form $1920$ orbits under the action of
      $\operatorname{Aut}(G)$.
    \item[$\boldsymbol{(3)}$] If $G=G(32, \, 50)=\mathsf{G}_5(\mathbb{Z}_2)$,
      these structures form $1152$ orbits under the action of
      $\operatorname{Aut}(G)$.
  \end{itemize}
\end{A}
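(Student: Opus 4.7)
The theorem splits into the universal lower bound on $|G|$ together with the extra-special characterization at equality, and the refined enumerative statements $(1)$--$(3)$ for the two extra-special groups of order $32$. My plan is to dispose of the structural claims first by a direct analysis of the defining relations of $\mathsf{P}_2(\Sigma_b)$ recalled in Section \ref{sec:ddks}, and then to settle the enumerative statements by a combination of an automorphism argument and a \texttt{GAP} computation.

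For the lower bound I would fix a diagonal double Kodaira structure $\mathfrak{S}$ of type $(b,n)$ on $G$ and single out the ``surface relation'' of $\mathsf{P}_2(\Sigma_b)$, which expresses $\z=\varphi(A_{12})$ as an explicit product of commutators of the $\r_{1i},\t_{1i}$ (and, in parallel, of the $\r_{2i},\t_{2i}$). Since $o(\z)=n\ge 2$, this element is a non-trivial commutator, so $\z\in G'$ is non-identity and $G$ is non-abelian. The goal is to bootstrap from this, combined with the mixed braid relations between the two rows, to the estimate $|G|\ge 32$. I would attempt this by passing to the nilpotency class two quotient and showing that the images of the $4b$ elements $\r_{ij},\t_{ij}$ span a subgroup of $\mathbb{F}_2$-rank at least $4$ in $G/G'$, giving $|G/G'|\ge 2^4$ and hence $|G|\ge 32$. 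At equality, all estimates must be tight, which forces $o(\z)=2$, $\langle\z\rangle=G'=\Phi(G)=Z(G)$ and $G/Z(G)\cong(\mathbb{Z}/2\mathbb{Z})^4$; this is precisely the defining property of an extra-special group of order $32$.

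For parts $(1)$--$(3)$ the plan is computational: in each of the two extra-special groups of order $32$, enumerate via \texttt{GAP} all $9$-tuples satisfying the finitely many relations of a diagonal double Kodaira structure of type $(2,2)$, and verify independently the total count $2211840=1152\cdot 1920$. This enumeration also establishes existence of such a structure on each of $G(32,49)$ and $G(32,50)$, closing the converse direction at equality. The orbit statements in $(2)$ and $(3)$ then follow from the observation that the action of $\operatorname{Aut}(G)$ on the set of structures is free: since the components of $\mathfrak{S}$ generate $G$, any automorphism fixing each of them must be the identity. Combining this with the \texttt{SmallGroups} library values $|\operatorname{Aut}(G(32,49))|=1152$ and $|\operatorname{Aut}(G(32,50))|=1920$ yields the claimed $1920$ and $1152$ orbits, respectively. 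Finally, that every such $\mathfrak{S}$ is strong with $\sigma(\mathfrak{S})=16$ can be verified on one representative per orbit by directly evaluating Definitions \ref{def:strongdks} and \ref{def:signature-ddks}.

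The main obstacle is the lower bound itself: extracting a clean rank estimate on $G/G'$ from the intricate surface-type relation is not routine, since the commutators $[\r_{1i},\t_{1i}]$ and the mixed ones between the two rows may conspire inside $G$. A careful inspection of the nilpotency class two quotient should suffice, but some casework over small Sylow structures---or, pragmatically, a direct \texttt{GAP} scan over all groups of order below $32$---will likely be unavoidable.
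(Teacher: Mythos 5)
Your plan for parts $(1)$--$(3)$ is sound in outline: the paper likewise uses the free action of $\operatorname{Aut}(G)$ on structures (each structure generates $G$) together with $|\operatorname{Aut}(G(32,49))|=1152$, $|\operatorname{Aut}(G(32,50))|=1920$ to pass from the total count to the orbit counts. However, the paper obtains the total $2211840$ not by machine enumeration but by reducing to symplectic linear algebra on $V=G/Z(G)\cong(\mathbb{Z}_2)^4$: a structure is determined by a generating $8$-tuple of $V$ satisfying explicit pairing conditions, counted as $2\cdot|\mathsf{Sp}(4,\mathbb{Z}_2)|\cdot|\mathsf{GL}(2,\mathbb{Z}_2)|$ configurations, times $2^8$ central corrections. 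A naive scan over all $9$-tuples in a group of order $32$ is on the order of $10^{12}$ cases, so some such reduction is needed even for a computational verification; this also yields strongness and $\sigma(\mathfrak{S})=16$ uniformly rather than orbit by orbit.

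The genuine gap is in the lower bound, which is the heart of the theorem. The rank estimate you propose --- that the images of the $\r_{ij},\t_{ij}$ span a subgroup of $\mathbb{F}_2$-rank at least $4$ in $G/[G,G]$ --- cannot be extracted the way you suggest: modulo $[G,G]$ every conjugacy-type relation in Definition \ref{def:dks} trivializes and the surface relations reduce to $\bar{\z}=1$, so the abelianized structure carries no information beyond the fact that the images generate $G/[G,G]$. Worse, even if the estimate held, tightness would not characterize the equality case: $G(32,46)=(\mathbb{Z}_2)^2\times\mathsf{D}_8$ has commutator subgroup of order $2$ and abelianization $(\mathbb{Z}_2)^4$, yet is not extra-special, so ``all estimates tight'' does not force $[G,G]=Z(G)$ cyclic of order $2$. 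The paper's actual mechanism is entirely different and is the idea your proposal is missing: Proposition \ref{prop:no-structure-CCT} shows that no CCT-group admits even a prestructure, because relations $(\mathrm{R6})$, $(\mathrm{T1})$, $(\mathrm{T3})$ force $[\r_{12},\t_{22}]=1$ by transitivity of commutation among non-central elements, contradicting $(\mathrm{R8})$; Proposition \ref{prop:small-CCT} then kills every non-abelian group of order less than $32$ except $\mathsf{S}_4$ and every group of order $32$ except seven, and the survivors are eliminated by ad hoc arguments (the $\mathsf{S}_4$ analysis and Proposition \ref{prop:no-structure-CCT-enhanced}). Your closing remark that ``some casework or a \texttt{GAP} scan will likely be unavoidable'' is, in effect, a concession that the central step of the theorem is not yet proved.
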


Our Main Theorem should be compared with previous results, obtained by the
first author in collaboration with A. Causin, regarding the construction of
diagonal double Kodaira structures on some extra-special groups of order at
least $2^7=128$, see \cite{CaPol19, Pol20}. However, even if the definition of diagonal double Kodaira structure and the construction of the corresponding diagonal double Kodaira fibration  presented in Sections \ref{sec:ddks} and \ref{sec:ddkf} closely follow the ones in \cite{Pol20}, the examples constructed here are really new, in the sense that they cannot be obtained
as images of structures on extra-special groups of larger order (Remark \ref{rmk:comparison-minimal-order}). It is precisely the original part of this paper, namely the subtle group theoretical analysis developed in Sections \ref{sec:CCT} and \ref{sec:DDKS} and used in the proof of the Main Theorem, which allows us to pass from $|G|=128$ to $|G|=32$.

\smallskip

The interpretation of the Main Theorem in terms of admissible epimorphisms from surface braid groups
to finite groups  is  given in Corollary \ref{cor:quotient-of-P2}. As a consequence, we can describe all diagonal double Kodaira fibrations associated with structures of type $(2, \, 2)$ on extra-special groups of order $32$ (Theorem \ref{thm:main-geometric}), showing that they provide the sharp lower bound  
$\sigma(S) \geq 16$ for the signature of a diagonal double Kodaira fibration (Corollary \ref{cor:bound-signature}).

\smallskip

These results yield, as a by-product, new ``double solutions'' to a problem
(stated by G. Mess) from Kirby's problem list in low-dimensional topology
\cite[Problem 2.18 A]{Kir97}, asking what is the smallest number $b$ for
which there exists a real surface bundle over a real surface with base genus
$b$ and non-zero signature. We actually have $b=2$, also for double Kodaira
fibrations, as shown in \cite[Proposition 3.19]{CaPol19} and \cite[Theorem
4.6]{Pol20} by using double Kodaira structures of type $(2, \, 3)$ on
extra-special groups of order $3^5$. Those fibrations had signature $144$
and fibre genera $325$; the new examples presented here substantially lower both these
values, in fact they have signature $16$ and fibre genera $41$ (Theorem \ref{thm:new-Kirby}).

%In fact, we may ask whether $16$ and $41$ are the minimum possible values for the
%signature and the fibre genus of a (non necessarily diagonal) double Kodaira
%fibration $f \colon S \to
%\Sigma_2 \times \Sigma_2$, cf. Corollary \ref{cor:min-genus-signature}.

\smallskip

We believe that the results described above are significant for at least
two reasons.
\begin{itemize}
  \item[$\boldsymbol{(i)}$] Although we know that $\mathsf{P}_2(\Sigma_b)$
    is residually $p$-finite for all prime number $p \geq 2$, see
    \cite[pp. 1481-1490]{BarBel09}, so far there has been no systematic
    work aimed to describe its admissible finite quotients. The first results in
    this direction were those of A. Causin and the first author, who showed
    that both extra-special groups of order $p^{4b+1}$
    appear as admissible quotients of $\mathsf{P}_2(\Sigma_b)$ for all $b
    \geq 2$ and
    all prime numbers $p \geq 5$; moreover, if $p$ divides $b+1$, then both
    extra-special groups of order $p^{2b+1}$ appear as admissible quotients,
    too. 
    %As we said before, the smallest adimissible quotients detected in
    %\cite{CaPol19} and \cite{Pol20}, corresponding to the case $(b, \,
    %p)=(3, \, 2)$, have order $2^7=128$. 
    Our work sheds some new light on this problem, by providing a
    sharp lower
    bound for the order of an admissible quotient. Moreover, for both
    extra-special groups
    of order $32$ (namely, the ones for which the bound is attained) we are able to compute the number of admissible epimorphisms
    $\varphi \colon \mathsf{P}_2(\Sigma_2) \to G$,
    and the number of their equivalence classes up to the natural action of
    $\operatorname{Aut}(G)$.
  \item[$\boldsymbol{(ii)}$] Constructing (double) Kodaira fibrations with
    small signature is
    a rather difficult problem, and there are few examples in the literature (\cite{BD02, LLR17}). As far as we know, the present paper provides the first positive-dimensional families of such examples, see Remark \ref{rmk:comparison-with-loenne} for more details.
\end{itemize}

\medskip
Let us now describe how this paper is organized. In Section \ref{sec:CCT}
we introduce some algebraic preliminaries, in particular we discuss
the so-called CCT-groups (Definition \ref{def:CCT}), namely, finite
non-abelian groups in which commutativity is a transitive relation on the
set of non-central elements. These groups are of historical importance
in the context of classification of finite simple groups, see Remark
\ref{rmk:CCT-historical-importance}, and they play a relevant role in
this paper. It turns out that there are precisely
eight groups $G$
with $|G| \leq 32$ that are not CCT-groups, namely $\mathsf{S}_4$ and
seven groups of order $32$, see Corollary \ref{cor:small-CCT}, Proposition
\ref{prop:24-no-CCT} and Proposition \ref{prop:CCT-minore-32}.

In Section \ref{sec:ddks} we define diagonal double Kodaira structures
 and we explain the relation with their counterpart in
geometric topology, namely admissible group epimorphisms from  pure surface
braid groups to finite groups.

Section \ref{sec:DDKS} is devoted to the study of diagonal double Kodaira
structures in groups of order at most $32$. One crucial technical result
is Proposition \ref{prop:no-structure-CCT}, stating that there are no such
structures on CCT-groups. Thus, in order to prove the first part of the Main Theorem, we only need to exclude the existence of
diagonal double Kodaira structures on $\mathsf{S}_4$ and on the five
non-abelian, non-CCT groups
of order $32$; this is done in	Proposition \ref{prop:less-32-no-ddks}
and Proposition \ref{prop:32-no-extra-special-no-structure}, respectively.
The second part of the Main Theorem, i.e. the computation of number of structures in
each case, is obtained by using some techniques borrowed from \cite{Win72};
more precisely, we exploit the fact that  $V=G/Z(G)$ is a symplectic vector
space of dimension $4$ over $\mathbb{Z}_2$,  and that $\operatorname{Out}(G)$
embeds in $\mathrm{Sp}(4, \, \mathbb{Z}_2)$ as the orthogonal group
associated with the quadratic form $q \colon V \to \mathbb{Z}_2$ related
to the symplectic form $(\cdot \;, \cdot)$ by $q(\overline{\mathsf{x}}\,
\overline{\mathsf{y}})=q(\overline{\mathsf{x}}) + q(\overline{\mathsf{y}})+
(\overline{\mathsf{x}}, \, \overline{\mathsf{y}})$.

Finally, in Section \ref{sec:ddkf} we establish the relation between our
algebraic
results and  the existence of diagonal double Kodaira fibrations,
and we prove the consequences of the Main Theorem in this geometrical framework. 

The paper ends with an Appendix, where we collect the
presentations for the non-abelian groups of order $24$ and $32$ that we
used in our calculations.

\bigskip

$\mathbf{Notation \; and \; conventions}$. If $S$ is a complex, non-singular
projective surface, then $c_1(S)$, $c_2(S)$ denote the first and second
Chern class of its tangent bundle $T_S$, respectively. 

%$p_g(S)=h^0(S,
%\, K_S)$ is the \emph{geometric genus}, $q(S)=h^1(S, \, K_S)$ is the
%\emph{irregularity} and $\chi(\mathcal{O}_S)=1-q(S)+p_g(S)$ is the
%\emph{Euler characteristic}.

Throughout the paper we use the following notation for
groups:
\begin{itemize}
  \item $\mZ_n$: cyclic group of order $n$.
  \item $G=N \rtimes Q$: semi-direct product of $N$ and $Q$, namely, split
    extension of $Q$ by $N$, where $N$ is normal in $G$.
  \item $G=N.Q$: non-split extension of $Q$ by $N$.
  \item $\operatorname{Aut}(G)$: the automorphism group of $G$.
  \item $\mathsf{D}_{p, \, q, \, r}=\mathbb{Z}_q \rtimes \mathbb{Z}_p=
    \langle
    x, \, y \; |
    \; x^p=y^q=1, \; xyx^{-1}=y^r \rangle$: split metacyclic
    group of order
    $pq$. The group
    $\mathsf{D}_{2, \, n, \, -1}$ is the dihedral group of
    order $2n$ and will
    be denoted by $\mathsf{D}_{2n}$.
  \item If $n$ is an integer greater or equal to $4$, we denote by
    $\mathsf{QD}_{2^n}$ the quasi-dihedral group of order
    $2^n$, having
    presentation
    \begin{equation}
      \mathsf{QD}_{2^n} := \langle x, \, y \mid
      x^2=y^{2^{n-1}} = 1, \; xyx^{-1}
      = y^{2^{n-2} - 1} \rangle.
    \end{equation}
  \item The generalized quaternion group of order $4n$ is denoted by
    $\mathsf{Q}_{4n}$ and is presented as
    \begin{equation}
      \mathsf{Q}_{4n} = \langle x, \,y, \, z \mid x^n =
      y^2 = z^2 = xyz \rangle.
    \end{equation}
    For $n=2$ we obtain the usual quaternion group $\mathsf{Q}_8$,
    for which we
    adopt the classical presentation
    \begin{equation}
      \mathsf{Q}_{8}=\langle i,\,j,\,k \mid i^2 = j^2 =
      k^2 = ijk\rangle,
    \end{equation}
    denoting by $-1$ the unique element of order $2$.
  \item $\mathsf{S}_n, \;\mathsf{A}_n$: symmetric, alternating group
    on $n$
    symbols. We write the
    composition of permutations from the right to the left;
    for instance,
    $(13)(12)=(123)$.
%\item	$\mathsf{Mat}(n, \, \mathbb{F}_q)$: vector space of $n
 %   \times n$ matrices over a field with $q$ elements.
  \item $\mathsf{GL}(n, \, \mathbb{F}_q), \, \mathsf{SL}(n, \,
    \mathbb{F}_q), \, \mathsf{Sp}(n, \, \mathbb{F}_q)$: general linear
    group, special linear group and symplectic group of $n
    \times n$ matrices over a field with $q$ elements.
  \item The order of a finite group $G$ is denoted by $|G|$. If $x
    \in G$,
    the order of
    $x$ is denoted by $o(x)$ and its centralizer in $G$
    by $C_G(x)$.
  \item If $x, \, y \in G$, their commutator is defined as
    $[x,\, y]=xyx^{-1}y^{-1}$.
  \item The commutator subgroup of $G$ is denoted by $[G, \, G]$,
    the center
    of $G$ by $Z(G)$.
  \item If $S= \{s_1, \ldots, s_n \} \subset G$, the subgroup generated
    by $S$ is denoted by $\langle S \rangle=\langle s_1,\ldots,
    s_n \rangle$.
  \item $\mathrm{IdSmallGroup}(G)$ indicates the label of the group
    $G$ in
    the  \verb|GAP4| database of small groups. For instance
    $\mathrm{IdSmallGroup}(\mathsf{D}_4)=G(8, \, 3)$ means
    that $\mathsf{D}_4$
    is the third in the
    list of groups of order $8$.
  \item If $N$ is a normal subgroup of $G$ and $g \in G$, we denote by
  $\bar{g}$
    the image of $g$ in the quotient group $G/N$.
\end{itemize}

\section{Group-theoretical preliminaries: CCT-groups and extra-special groups}
\label{sec:CCT}

\begin{definition} \label{def:CCT}
  A finite non-abelian group $G$ is said to be a
  \emph{center commutative-transitive group}
  $($or a CCT-\emph{group}, for short$)$ if commutativity is a transitive
  relation on the set on non-central elements of $G$. In other words, if $x,
  \, y, \, z	\in G \setminus Z(G)$  and $[x, \, y]=[y, \, z]=1$, 
  then $[x, \, z]=1$.
\end{definition}
Other characterizations of CCT-groups are provided in the statement below, whose proof is straightforward.

\begin{proposition} \label{prop:CCT}
  For a finite group $G$, the following properties are equivalent.
  \begin{itemize}
    \item[$\boldsymbol{(1)}$] $G$ is a \emph{CCT}-group.
    \item[$\boldsymbol{(2)}$] For every pair $x, \, y $ of
      non-central elements in $G$, the relation $[x, \, y]=1$ implies
      $C_G(x)=C_G(y)$.
    \item[$\boldsymbol{(3)}$] For every non-central element $x
      \in G$, the centralizer $C_G(x)$ is abelian.
  \end{itemize}
\end{proposition}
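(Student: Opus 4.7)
The plan is the standard cyclic chain of implications $(1) \Rightarrow (2) \Rightarrow (3) \Rightarrow (1)$. None of the steps should be hard; the key observation is that the only subtle point is the handling of central elements, which lie in every centralizer and so never obstruct the argument.

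For $(1) \Rightarrow (2)$, I would take non-central $x,y$ with $[x,y]=1$ and show $C_G(x) \subseteq C_G(y)$ (the reverse inclusion follows by symmetry). Given $z \in C_G(x)$, split on whether $z$ lies in $Z(G)$: if it does, then trivially $z \in C_G(y)$; if it does not, then $x,y,z$ are all non-central with $[x,y]=[x,z]=1$, so by CCT (applied with $x$ in the middle) we deduce $[y,z]=1$, i.e., $z \in C_G(y)$.

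For $(2) \Rightarrow (3)$, fix a non-central $x$ and take arbitrary $u,v \in C_G(x)$; I need $[u,v]=1$. If either $u$ or $v$ is central the commutator is trivially $1$, so assume both are non-central. Since $[x,u]=1$, hypothesis (2) gives $C_G(x)=C_G(u)$, and in particular $v \in C_G(u)$, i.e., $[u,v]=1$. Thus $C_G(x)$ is abelian.

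For $(3) \Rightarrow (1)$, take non-central $x,y,z$ with $[x,y]=[y,z]=1$. Then $x,z \in C_G(y)$, which by (3) is abelian; hence $[x,z]=1$, which is exactly the CCT condition. The only thing to be careful about throughout is remembering that central elements must be treated separately in (1) $\Rightarrow$ (2), since the CCT hypothesis applies only to non-central triples; once that case-split is in place everything else is a one-line deduction, so I do not expect any real obstacle.
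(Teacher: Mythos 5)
Your proof is correct and follows essentially the same route as the paper: the same cyclic chain $(1)\Rightarrow(2)\Rightarrow(3)\Rightarrow(1)$, with the same case split on whether $z$ is central in the first implication. The only cosmetic difference is in $(2)\Rightarrow(3)$, where you apply hypothesis $(2)$ once to get $C_G(x)=C_G(u)$ while the paper applies it to both elements to get $C_G(y)=C_G(z)$; these are trivially equivalent.
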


\begin{remark} \label{rmk:CCT-historical-importance}
  CCT-groups are of historical importance in the context of classification of
  finite simple groups, see for instance \cite{Suz61}, where they are called
  CA-\emph{groups}. Further references are  \cite{Schm70},
  \cite{Reb71}, \cite{Rocke73}, \cite{Wu98}.
\end{remark}

\begin{lemma} \label{lem:G/Z(G) cyclic}
  If $G$ is a finite group such that $G/Z(G)$ is cyclic, then $G$
  is abelian.
\end{lemma}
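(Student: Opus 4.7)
The plan is to pick a generator of the cyclic quotient $G/Z(G)$ and use the corresponding coset decomposition to show directly that any two elements of $G$ commute. Concretely, I would choose $g \in G$ such that $gZ(G)$ generates $G/Z(G)$, and observe that every coset of $Z(G)$ in $G$ is of the form $g^i Z(G)$ for some integer $i$.

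Given this, any element $x \in G$ can be written as $x = g^i z_1$ for some $i \in \mathbb{Z}$ and some $z_1 \in Z(G)$, and similarly any $y \in G$ can be written as $y = g^j z_2$ with $z_2 \in Z(G)$. The key step is then to compute $xy$ and $yx$ and exploit the fact that $z_1$ and $z_2$, being central, commute with every element of $G$, in particular with $g$ and with each other. This yields
\[
xy = g^i z_1 g^j z_2 = g^{i+j} z_1 z_2 = g^{j+i} z_2 z_1 = g^j z_2 g^i z_1 = yx,
\]
so $G$ is abelian (and in particular $Z(G) = G$, meaning that the only way $G/Z(G)$ can be cyclic is by being trivial).

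There is no real obstacle here: the whole argument is a direct manipulation of cosets, once one has fixed a lift $g$ of a generator of $G/Z(G)$. The one conceptual point worth emphasizing is that centrality of $z_1$ and $z_2$ is exactly what licenses moving them past the powers of $g$, so the noncommutativity of $G$ is forced to collapse into the trivially commutative algebra of powers of a single element.
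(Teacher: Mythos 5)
Your proof is correct and follows exactly the same route as the paper: lift a generator of $G/Z(G)$ to an element $g \in G$, write arbitrary elements as $g^i z$ with $z \in Z(G)$, and commute them using centrality. The paper merely states this in one line; you have spelled out the computation, but there is no difference in substance.
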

\begin{proof}
This is a standard exercise, cf. \cite[Problem 12 p. 77]{Her64}. 
\end{proof}

\begin{proposition} \label{prop:small-CCT}
  Let $G$ be a finite non-abelian group.
  \begin{itemize}
    \item[$\boldsymbol{(1)}$] If $|G|$ is the product of at
      most three prime
      factors $($non necessarily distinct$)$, then $G$
      is a \emph{CCT}-group.
    \item[$\boldsymbol{(2)}$] If $|G|=p^4$, with $p$ prime,
      then $G$ is a
      \emph{CCT}-group.
    \item[$\boldsymbol{(3)}$] If $G$ contains an abelian normal
      subgroup of
      prime index, then $G$ is a \emph{CCT}-group.
  \end{itemize}
\end{proposition}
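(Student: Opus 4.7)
My plan is to verify in each of the three cases the equivalent characterization from Proposition~\ref{prop:CCT}$\boldsymbol{(3)}$: $C_G(x)$ is abelian for every non-central $x \in G$. The running observation is that $\langle x, Z(G)\rangle \subseteq Z(C_G(x))$, since $x$ together with every central element commutes with all of $C_G(x)$; combined with Lemma~\ref{lem:G/Z(G) cyclic}, this inclusion will force $C_G(x)$ to be abelian as soon as its order is sufficiently controlled.

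For $\boldsymbol{(3)}$, I would split on whether $x \in N$. If $x\in N$, then $N\subseteq C_G(x)\subsetneq G$, and the primality of $[G:N]=p$ forces $C_G(x)=N$, which is abelian. If $x\notin N$, then $G=N\langle x\rangle$, and a direct computation shows that $n x^k$ centralizes $x$ exactly when $n\in C_N(x)$, so $C_G(x)=C_N(x)\langle x\rangle$ is abelian because its two factors commute element-wise.

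For $\boldsymbol{(1)}$ I would split on the prime factorization $|G|\in\{pq,\,p^3,\,p^2q,\,pqr\}$, which exhausts the non-abelian possibilities. In every case $C_G(x)$ is a proper subgroup, so $|C_G(x)|$ is a product of at most two primes (with multiplicity). Orders that are a prime or equal $p^2$ give an abelian centralizer automatically. The delicate case is $|C_G(x)|=pq$ with distinct primes: a non-abelian group of order $pq$ has trivial center, which contradicts $1\neq x\in Z(C_G(x))$, so $C_G(x)$ must be abelian. For $\boldsymbol{(2)}$, with $|G|=p^4$ non-abelian, Lemma~\ref{lem:G/Z(G) cyclic} forces $|Z(G)|\in\{p,p^2\}$, whence $|C_G(x)|\in\{p^2,p^3\}$ for non-central $x$. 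Order $p^2$ is abelian; for order $p^3$, the inclusion $\langle x,Z(G)\rangle\subseteq Z(C_G(x))$ gives $|Z(C_G(x))|\geq p^2$, so $C_G(x)/Z(C_G(x))$ is cyclic and Lemma~\ref{lem:G/Z(G) cyclic} once more makes $C_G(x)$ abelian. The main obstacle is the $pq$-subcase in $\boldsymbol{(1)}$, which is not settled by a cardinality bound alone but requires combining the size information with the fact that $x$ lies non-trivially in $Z(C_G(x))$.
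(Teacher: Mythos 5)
Your proof is correct. For parts $\boldsymbol{(1)}$ and $\boldsymbol{(3)}$ you follow essentially the paper's route: in $\boldsymbol{(1)}$ both arguments come down to the fact that $C_G(x)$ has order a product of at most two primes and non-trivial center (it contains $x$), so $C_G(x)/Z(C_G(x))$ is cyclic and Lemma~\ref{lem:G/Z(G) cyclic} applies --- your explicit split into the subcases $|C_G(x)|$ prime, $p^2$, or $pq$ merely spells this out; in $\boldsymbol{(3)}$ the two-case analysis according to whether $x\in N$ is identical. Part $\boldsymbol{(2)}$ is where you genuinely diverge. The paper argues by contradiction: a failure of transitivity yields a non-abelian proper subgroup $N=\langle x,\,y,\,z\rangle$ of order $p^3$ with $Z(N)=\langle y\rangle$; being maximal in the nilpotent group $G$, the subgroup $N$ is normal, and the induced conjugation homomorphism $G\to\operatorname{Aut}(Z(N))\simeq\mathbb{Z}_{p-1}$ must be trivial, forcing the non-central witness $y$ into $Z(G)$. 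You instead verify criterion $\boldsymbol{(3)}$ of Proposition~\ref{prop:CCT} directly: from $|Z(G)|\in\{p,\,p^2\}$ and the inclusion $\langle x,\,Z(G)\rangle\subseteq Z(C_G(x))$ you get $p^2\leq |C_G(x)|\leq p^3$ and $|Z(C_G(x))|\geq p^2$, so the quotient of $C_G(x)$ by its center is cyclic and Lemma~\ref{lem:G/Z(G) cyclic} finishes. Your version is more elementary --- no appeal to nilpotency, normality of maximal subgroups, or automorphism groups --- and it treats all three parts uniformly through the centralizer criterion; the paper's version costs more machinery but pinpoints the concrete obstruction, namely that the middle element of a non-transitive triple would be forced to be central.
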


\begin{proof}
  $\boldsymbol{(1)}$ Assume that $|G|$ is the product of at most
  three prime
  factors, and take a non-central element $y$. Then the centralizer
  $C_G(y)$
  has non-trivial center, because $1 \neq y \in C_G(y)$, and its
  order is the
  product of at most two primes. Therefore the quotient of  $C_G(y)$
  by its
  center is cyclic, hence $C_G(y)$ is abelian by Lemma \ref{lem:G/Z(G)
  cyclic}.

  $\boldsymbol{(2)}$ Assume $|G|=p^4$ and suppose by contradiction
  that there
  exist three elements $x, \, y, \, z \in G\setminus Z(G)$ 
  such that $[x, \, y]=[y, \, z]=1$
  but $[x,
  \, z] \neq 1$. They generate a non-abelian subgroup $N=\langle x, \, y, \,
  z \rangle$, which is not the whole of $G$ since $y \in Z(N)$ but $y \notin
  Z(G)$. It
  follows that $N$ has order $p^3$ and so, by Lemma
  \ref{lem:G/Z(G) cyclic}, its center is cyclic of order $p$,
  generated by
  $y$. The group $G$ is a finite $p$-group, hence a nilpotent group;
  being a
  proper subgroup of maximal order in a nilpotent group, $N$ is normal
  in $G$
  (see \cite[Corollary 5.2]{Mac12}), so we have a conjugacy
  homomorphism $G
  \to \mathrm{Aut}(N)$, that in turn induces a conjugacy homomorphism
  $G \to
  \mathrm{Aut}(Z(N)) \simeq \mathbb{Z}_{p-1}$. The image of such
  a homomorphism
  must have order dividing both $p^4$ and $p-1$, hence it is
  trivial. In other
  words, the conjugacy action of $G$ on $Z(N)=\langle y \rangle$
  is trivial,
  hence $y$ is central in $G$, contradiction.

  $\boldsymbol{(3)}$ Let $N$ be an abelian normal subgroup of $G$ such
  that $G/N$
  has prime order $p$. As $G/N$ has no non-trivial proper subgroups, it
  follows that
  $N$ is
  a maximal subgroup of $G$. Let $x$ be any non-central element
  of $G$,
  so that $C_G(x)$ is a proper subgroup of $G$; then there are two
  possibilities. \\ \\
  \underline{Case 1}: $x \in N$. Then $N \subseteq C_G(x)$
  and so, by the
  maximality of $N$, we get  $C_G(x)=N$, which is
  abelian. \\
  \underline{Case 2}: $x \notin N$. Then the image of $x$
  generates $G/N$, and
  so every element $y \in G$ can be written in the
  form $y=ux^r$, where $u \in
  N$ and $0 \leq r \leq p-1$. In particular, if $y
  \in C_G(x)$, the condition
  $[x,\, y]=1$ yields $[x, \, u]=1$, namely $u \in
  N \cap C_G(x)$. Since $N$
  is abelian, it follows that $C_G(x)$
  is abelian, too.

\end{proof}

We now want to classify non-abelian,  non-CCT groups of order at most
$32$. First of all,
as an immediate consequence of parts $\boldsymbol{(1)}$ and $\boldsymbol{(2)}$
of Proposition \ref{prop:small-CCT}, we have the following
\begin{corollary}  \label{cor:small-CCT}
  Let  $G$ be a finite non-abelian group such that $|G| \leq
  32$. If $G$
  is not a \emph{CCT}-group, then either $|G|=24$ or $|G|=32$.
\end{corollary}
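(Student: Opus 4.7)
The plan is to argue by checking all possible orders $n \leq 32$ for a non-abelian group, and showing that every such $n$ except $24$ and $32$ falls under the hypothesis of part $\boldsymbol{(1)}$ or part $\boldsymbol{(2)}$ of Proposition \ref{prop:small-CCT}, which would force $G$ to be a CCT-group. So the contrapositive gives the claim.

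More concretely, first I would list the integers $n$ with $6 \leq n \leq 32$ that admit a non-abelian group, namely
\begin{equation}
  n \in \{6, 8, 12, 16, 18, 20, 21, 24, 27, 28, 30, 32\}.
\end{equation}
(The remaining $n \leq 32$ are either primes, twice a prime, or of the form $pq$ with $p < q$ and $p \nmid q-1$, in which case every group of order $n$ is abelian and there is nothing to prove.) Then I would split this list into two sub-lists: the orders that are products of at most three prime factors, and the orders of the form $p^4$.

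Specifically, the orders $6 = 2 \cdot 3$, $8 = 2^3$, $12 = 2^2 \cdot 3$, $18 = 2 \cdot 3^2$, $20 = 2^2 \cdot 5$, $21 = 3 \cdot 7$, $27 = 3^3$, $28 = 2^2 \cdot 7$, and $30 = 2 \cdot 3 \cdot 5$ are all products of at most three (not necessarily distinct) primes, so any non-abelian group of one of these orders is a CCT-group by Proposition \ref{prop:small-CCT}$\boldsymbol{(1)}$. The only remaining order satisfying the hypothesis of Proposition \ref{prop:small-CCT}$\boldsymbol{(2)}$ is $16 = 2^4$, which also yields CCT-groups. The two orders $24 = 2^3 \cdot 3$ and $32 = 2^5$ are the only ones in our list that escape both criteria. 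Hence, if $G$ is a non-abelian, non-CCT group with $|G| \leq 32$, then necessarily $|G| \in \{24, \, 32\}$, as claimed.

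There is no real obstacle here: this is a direct numerical verification combined with the two sufficient conditions already proved in Proposition \ref{prop:small-CCT}. The only thing to be careful about is to make sure the enumeration of possible orders is complete, which is a routine check.
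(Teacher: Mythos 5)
Your approach is exactly the paper's: the corollary is stated there as an immediate consequence of Parts $\boldsymbol{(1)}$ and $\boldsymbol{(2)}$ of Proposition \ref{prop:small-CCT}, and your case check correctly identifies $24=2^3\cdot 3$ and $32=2^5$ as the only orders $\leq 32$ that are neither a product of at most three primes nor of the form $p^4$. One small factual slip: your parenthetical claim that every group of order twice a prime is abelian is false --- the dihedral groups of orders $10$, $14$, $22$, $26$ are non-abelian, so these orders do admit non-abelian groups and belong in your list. This does not damage the argument, since each such order is a product of two primes and is therefore still covered by Part $\boldsymbol{(1)}$ of Proposition \ref{prop:small-CCT}; but the cleaner statement is simply that every $n\leq 32$ other than $24$ and $32$ satisfies one of the two hypotheses of that proposition, without trying to decide which orders admit non-abelian groups at all.
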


Let us start by considering the case $G=24$.

\begin{proposition} \label{prop:24-no-CCT}
  Let $G$ be a finite non-abelian group such that $|G|=24$ and $G$
  is not a \emph{CCT}-group. Then $G=\mathsf{S}_4$.
\end{proposition}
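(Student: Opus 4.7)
The plan is to use Sylow theory to reduce to a small number of cases, then apply Proposition \ref{prop:small-CCT}(3) or a direct centralizer computation.

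Since $|G|=24=2^{3}\cdot 3$, the number $n_{3}$ of Sylow $3$-subgroups satisfies $n_{3}\in\{1,\,4\}$, and I would treat the two cases separately. If $n_{3}=1$, let $P_{3}$ be the normal Sylow $3$-subgroup and $P_{2}$ any Sylow $2$-subgroup. The centralizer $C_{G}(P_{3})$, being the kernel of the conjugation map $G\to\operatorname{Aut}(P_{3})\cong\mathbb{Z}_{2}$, has index $1$ or $2$ in $G$. When the index is $1$, $P_{3}$ is central and $G=P_{3}\times P_{2}$ as an internal direct product; since $G$ is non-abelian, $P_{2}$ is non-abelian of order $8$ and so contains an abelian subgroup of index $2$, producing an abelian normal subgroup of order $12$ in $G$. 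When the index is $2$, $C_{G}(P_{3})$ itself has order $12$ and contains $P_{3}$ centrally, so $C_{G}(P_{3})=P_{3}\times Q$ with $|Q|=4$, which is abelian and normal of index $2$. In either subcase Proposition \ref{prop:small-CCT}(3) forces $G$ to be a CCT-group, contradicting the hypothesis.

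I would then treat $n_{3}=4$ by considering the conjugation action of $G$ on the four Sylow $3$-subgroups, which gives a homomorphism $\varphi\colon G\to\mathsf{S}_{4}$. The kernel $K$ cannot contain any Sylow $3$-subgroup, for otherwise, being normal, it would contain all four and hence have order at least $9$, leaving the image too small to act transitively on four points; combined with transitivity of $\operatorname{Im}\varphi$ this gives $|K|\in\{1,\,2\}$. If $|K|=1$ then $\varphi$ is injective and $G\cong\mathsf{S}_{4}$, the desired conclusion. If $|K|=2$, then $K\subseteq Z(G)$ and $G/K\cong\mathsf{A}_{4}$ (the unique transitive subgroup of $\mathsf{S}_{4}$ of order $12$), so $G$ is one of the two central extensions $\mathsf{A}_{4}\times\mathbb{Z}_{2}$ or $\mathsf{SL}(2,\,3)$, and both must be ruled out.

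The group $\mathsf{A}_{4}\times\mathbb{Z}_{2}$ is immediate: its Sylow $2$-subgroup $V_{4}\times\mathbb{Z}_{2}\cong(\mathbb{Z}_{2})^{3}$ is abelian and normal of index $3$, so Proposition \ref{prop:small-CCT}(3) applies. The main obstacle is $\mathsf{SL}(2,\,3)$, which has no abelian subgroup of prime index: its abelianization is $\mathbb{Z}_{3}$, so it admits no index-$2$ subgroup, and its unique Sylow $2$-subgroup is the non-abelian $\mathsf{Q}_{8}$. Here I would instead verify the CCT condition by a direct centralizer computation, using Proposition \ref{prop:CCT}(3). The six elements of order $4$ form a single conjugacy class, giving $|C_{G}(x)|=4$ and hence $C_{G}(x)=\langle x\rangle\cong\mathbb{Z}_{4}$; for $x$ of order $3$ or $6$ the image $\bar{x}\in G/Z(G)\cong\mathsf{A}_{4}$ has cyclic centralizer $\mathbb{Z}_{3}$, so $C_{G}(x)/Z(G)$ embeds into $\mathbb{Z}_{3}$ and $C_{G}(x)$ is abelian of order $6$. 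This shows $\mathsf{SL}(2,\,3)$ is a CCT-group and concludes the argument, leaving $\mathsf{S}_{4}$ as the only possibility.
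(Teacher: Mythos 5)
Your argument is correct, but it reaches the conclusion by a genuinely different route from the paper. The paper's proof runs through the explicit list of all non-abelian groups of order $24$ (Table \ref{table:24-nonabelian}), exhibiting in each case other than $\mathsf{SL}(2, \, \mathbb{F}_3)$ and $\mathsf{S}_4$ a concrete abelian subgroup of prime index and then invoking Proposition \ref{prop:small-CCT}$(3)$; you instead avoid the classification entirely, using Sylow theory: when $n_3=1$ you produce an abelian normal subgroup of index $2$ or $3$ directly from $C_G(P_3)$, and when $n_3=4$ the conjugation action on Sylow $3$-subgroups pins $G$ down to $\mathsf{S}_4$ or a central extension of $\mathsf{A}_4$ by $\mathbb{Z}_2$. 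Both treatments converge on the same ad hoc step for $\mathsf{SL}(2, \, \mathbb{F}_3)$ --- the one group of order $24$ with no abelian subgroup of prime index --- and your centralizer computation there ($C_G(x)\simeq\mathbb{Z}_4$ for $o(x)=4$, $C_G(x)\simeq\mathbb{Z}_6$ for $o(x)\in\{3,6\}$) is equivalent to the paper's. Your approach buys independence from the table of presentations and is more portable; the paper's buys uniformity with the order-$32$ case, where the same table-driven method is reused. Two small remarks. First, the one step you assert without proof is that $\mathsf{A}_4\times\mathbb{Z}_2$ and $\mathsf{SL}(2, \, \mathbb{F}_3)$ are the only central extensions of $\mathsf{A}_4$ by $\mathbb{Z}_2$; this is true (the Schur multiplier of $\mathsf{A}_4$ is $\mathbb{Z}_2$), but if you want to keep the argument self-contained you can instead observe that the Sylow $2$-subgroup $P_2$ is normal (being the preimage of $\mathsf{V}_4\trianglelefteq\mathsf{A}_4$): if $P_2$ is abelian you are done at once by Proposition \ref{prop:small-CCT}$(3)$, while if $P_2$ is non-abelian the constraint $P_2/K\simeq\mathsf{V}_4$ with $K$ central forces $P_2\simeq\mathsf{D}_8$ or $\mathsf{Q}_8$, and only $\mathsf{Q}_8$ admits an order-$3$ automorphism, yielding $\mathsf{SL}(2, \, \mathbb{F}_3)$ as the sole case needing the centralizer check. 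Second, the paper's proof also records that $\mathsf{S}_4$ itself is \emph{not} a CCT-group; this is not logically required by the implication as stated, but it is what makes the proposition sharp, and you may want to include the one-line witness $[(1234),(13)(24)]=[(13)(24),(12)(34)]=1$ with $[(1234),(12)(34)]\neq 1$.
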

\begin{proof}
  We start by observing that $\mathsf{S}_4$ is not a CCT-group. In fact,
  $(1234)$  commutes to its square $(13)(24)$, which commutes to
  $(12)(34)$, but $(1234)$ and $(12)(34)$ do not commute.

  We are left to show that the remaining non-abelian groups of order
  $24$ are all CCT-groups; we will do a case-by-case analysis, referring
  the reader to the presentations given in Table \ref{table:24-nonabelian}
  of Appendix A. Apart
  from $G=G(24, \, 3)= \mathsf{SL}(2, \, \mathbb{F}_3)$, for which we give
  an ad-hoc proof, we will show that all these groups contain an abelian
  subgroup $N$ of prime index, so that we can conclude by using part
  $\boldsymbol{(3)}$ of Proposition \ref{prop:small-CCT}.
  \begin{itemize}
    \item $G=G(24, \, 1).$ Take $N=\langle x^2y \rangle \simeq
      \mathbb{Z}_{12}$.
    \item $G=G(24, \, 3).$ The action of $\mathrm{Aut}(G)$
      has five orbits,
      whose representative elements are $\{1, \, x, \,
      x^2, \, z, \, z^2\}$, see
      \cite{SL(23)}. We have $\langle z^2 \rangle = Z(G)$
      and so, since $C_G(x)
      \subseteq C_G(x^2)$, it suffices to show that the
      centralizers of $x^2$
      and $z$ are both abelian. In fact, we have
      \begin{equation}
	C_G(x^2)=\langle x \rangle \simeq
	\mathbb{Z}_6, \quad C_G(z)=\langle z
	\rangle \simeq \mathbb{Z}_4.
      \end{equation}
    \item $G=G(24, \, 4).$ Take $N=\langle x \rangle \simeq
      \mathbb{Z}_{12}$.
    \item $G=G(24, \, 5).$ Take $N=\langle y \rangle \simeq
      \mathbb{Z}_{12}$.
    \item $G=G(24, \, 6).$ Take $N=\langle y \rangle \simeq
      \mathbb{Z}_{12}$.
    \item $G=G(24, \, 7).$ Take $N=\langle z, \, x^2y \rangle
      \simeq \mathbb{Z}_6
      \times \mathbb{Z}_2$.
    \item $G=G(24, \, 8).$ Take $N=\langle y, \, z, \, w
      \rangle \simeq
      \mathbb{Z}_6 \times \mathbb{Z}_2$.
    \item $G=G(24, \, 10).$ Take $N=\langle z, \, y \rangle \simeq
      \mathbb{Z}_{12}$.
    \item $G=G(24, \, 11).$ Take $N=\langle z, \, i \rangle \simeq
      \mathbb{Z}_{12}$.
    \item $G=G(24, \, 13).$ Take $N=\langle z \rangle \times
      \mathsf{V}_4\simeq
      (\mathbb{Z}_2)^3$, where $\mathsf{V}_4 = \langle (1\,2)(3\,4), \;
      (1\, 3)(2\, 4) \rangle$ is the Klein
      subgroup.
    \item $G=G(24, \, 14).$ Take $N=\langle z, \, w \rangle
      \times \langle (123)
      \rangle \simeq \mathbb{Z}_6 \times \mathbb{Z}_2$.
  \end{itemize}
  This completes the proof.
\end{proof}

The next step is to classify non-abelian, non-CCT groups $G$
with $|G| = 32$; it will turn out that there are
precisely seven of them, see Proposition \ref{prop:CCT-minore-32}. Before
doing
this, let us introduce the following classical definition, see for instance
\cite[p. 183]{Gor07} and \cite[p. 123]{Is08}.

\begin{definition} \label{def:extra-special}
  Let $p$ be a prime number. A finite $p$-group $G$ is called
  \emph{extra-special} if its center $Z(G)$ is cyclic of order $p$
  and the
  quotient $V=G/Z(G)$ is a non-trivial, elementary abelian $p$-group.
\end{definition}

An elementary abelian $p$-group is a finite-dimensional vector space over
the field $\mathbb{Z}_p$, hence it is of the form $V=(\mathbb{Z}_p)^{\dim
V}$ and $G$ fits into a short exact sequence
\begin{equation} \label{eq:extension-extra}
  1 \to \mathbb{Z}_p \to G \to V \to 1.
\end{equation}
Note that, $V$ being abelian, we must have $[G, \, G]=\mathbb{Z}_p$, namely
the commutator subgroup of $G$ coincides with its center. Furthermore,
since the extension \eqref{eq:extension-extra} is central, it cannot be
split, otherwise $G$ would be isomorphic to the direct product of the two
abelian groups $\mathbb{Z}_p$ and $V$, which is impossible because $G$
is non-abelian.

If $G$ is extra-special, then we can define a map $\omega \colon V \times
V \to \mathbb{Z}_p$ as follows: for every $v_1, \, v_2 \in V$, we set
$\omega(v_1, \, v_2)=[g_1, \, g_2]$, where $g_i$ is any lift of $v_i$
in $G$. This turns out to be a symplectic form on $V$, hence
$\dim V$ is even and $|G|=p^{\dim V +1}$ is an odd power of $p$.

For every prime number $p$, there are precisely two isomorphism classes
$M(p)$, $N(p)$ of non-abelian groups of order $p^3$, namely
\begin{equation*}
  \begin{split}
    M(p)& = \langle \r, \, \t, \, \z \; | \; \r^p=\t^p=1, \,
    \z^p=1, [\r, \,
    \z]=[\t,\, \z]=1, \, [\r, \, \t]=\z^{-1} \rangle \\
    N(p)& = \langle \r, \, \t, \, \z \; | \; \r^p=\t^p=\z, \,
    \z^p=1, [\r, \,
    \z]=[\t,\, \z]=1, \, [\r, \, \t]=\z^{-1} \rangle \\
  \end{split}
\end{equation*}
and both of them are in fact extra-special, see \cite[Theorem 5.1 of
Chapter 5]{Gor07}.

If $p$ is odd, then the groups $M(p)$ and $N(p)$ are distinguished by
their exponent, which equals $p$ and $p^2$, respectively. If $p=2$, the
group $M(p)$ is isomorphic to the dihedral group $D_8$, whereas $N(p)$
is isomorphic to the quaternion group $\mathsf{Q}_8$.

The classification of extra-special $p$-groups is now provided by the result
below, see \cite[Section 5 of Chapter 5]{Gor07} and \cite[Section 2]{CaPol19}.
\begin{proposition} \label{prop:extra-special-groups}
  If $b \geq 2$ is a positive integer and $p$ is a prime number, there
  are exactly two isomorphism classes of extra-special $p$-groups
  of order
  $p^{2b+1}$, that can be described as follows.
  \begin{itemize}
    \item The central product $\mathsf{H}_{2b+1}(\mathbb{Z}_p)$
      of $b$ copies
      of $M(p)$, having presentation
      \begin{equation} \label{eq:H5}
	\begin{split}
	  \mathsf{H}_{2b+1}(\mathbb{Z}_p)
	  = \langle \, & \mathsf{r}_1, \,
	  \mathsf{t}_1,
	  \ldots, \mathsf{r}_b,\, \mathsf{t}_b,
	  \, \z \; |  \; \mathsf{r}_{j}^p =
	  \mathsf{t}_{j}^p=\mathsf{z}^p=1, \\
	  & [\mathsf{r}_{j}, \, \mathsf{z}]  =
	  [\mathsf{t}_{j}, \, \mathsf{z}]= 1, \\
	  & [\mathsf{r}_j, \, \mathsf{r}_k]=
	  [\mathsf{t}_j, \, \mathsf{t}_k] =
	  1, \\
	  & [\mathsf{r}_{j}, \,\mathsf{t}_{k}]
	  =\mathsf{z}^{- \delta_{jk}} \, \rangle.
	\end{split}
      \end{equation}
      If $p$ is odd, this group has exponent $p$ and is isomorphic to the
      matrix Heisenberg group $\mathcal{H}_{2b+1}(\mathbb{Z}_p) \subset
      \mathsf{GL}(b+2, \, \mathbb{Z}_p)$ of dimension $2b+1$ over the
      field $\mathbb{Z}_p$.
    \item The central product $\mathsf{G}_{2b+1}(\mathbb{Z}_p)$
      of $b-1$ copies
      of $M(p)$ and one copy of $N(p)$,  having presentation
      \begin{equation} \label{eq:G5}
	\begin{split}
	  \mathsf{G}_{2b+1}(\mathbb{Z}_p)
	  = \langle \, & \mathsf{r}_1, \,
	  \mathsf{t}_1,
	  \ldots, \mathsf{r}_b,\, \mathsf{t}_b,
	  \, \z \; | \;  \mathsf{r}_{b}^p =
	  \mathsf{t}_{b}^p=\mathsf{z}, \\
	  &\mathsf{r}_{1}^p = \mathsf{t}_{1}^p=
	  \ldots = \mathsf{r}_{b-1}^p =
	  \mathsf{t}_{b-1}^p=\mathsf{z}^p=1, \\
	  & [\mathsf{r}_{j}, \, \mathsf{z}]  =
	  [\mathsf{t}_{j}, \, \mathsf{z}]= 1, \\
	  & [\mathsf{r}_j, \, \mathsf{r}_k]=
	  [\mathsf{t}_j, \, \mathsf{t}_k] =
	  1, \\
	  & [\mathsf{r}_{j}, \,\mathsf{t}_{k}]
	  =\mathsf{z}^{- \delta_{jk}} \, \rangle.
	\end{split}
      \end{equation}
      If $p$ is odd, this group has exponent $p^2$.
  \end{itemize}
\end{proposition}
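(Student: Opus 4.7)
The plan is to establish the classification in three stages: existence of the two types, structural decomposition of an arbitrary extra-special $p$-group into a central product of order-$p^3$ pieces, and a reduction showing that all such central products collapse to one of the two models.

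First I would verify existence directly from the presentations \eqref{eq:H5} and \eqref{eq:G5}. In each case one checks that $\z$ is central of order $p$, that $V=G/\langle \z\rangle$ is elementary abelian (generated by the classes of the $\r_j$ and $\t_j$), and that the commutator relations force $[G,G]=\langle\z\rangle=Z(G)$ and $|G|=p^{2b+1}$. For $p$ odd, the collection of relations in $\mathsf{H}_{2b+1}(\mathbb{Z}_p)$ combined with the Hall--Witt identity gives $(\r_j\t_k)^p=\r_j^p\t_k^p[\r_j,\t_k]^{\binom{p}{2}}=1$, hence exponent $p$; for $\mathsf{G}_{2b+1}(\mathbb{Z}_p)$ the defining relation $\r_b^p=\z$ shows $\r_b$ has order $p^2$. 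The matrix-Heisenberg identification of $\mathsf{H}_{2b+1}(\mathbb{Z}_p)$ is by the usual embedding sending $\r_j, \t_j, \z$ to the elementary upper-triangular matrices $E_{1,j+1}, E_{j+1,b+2}, E_{1,b+2}$.

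Next I would prove the structural decomposition. Given an arbitrary extra-special $G$ of order $p^{2b+1}$, recall from the introduction that the commutator map descends to a non-degenerate symplectic form $\omega$ on $V=G/Z(G)\cong (\mathbb{Z}_p)^{2b}$. Picking a hyperbolic pair $\bar{\r}_1,\bar{\t}_1\in V$ with $\omega(\bar{\r}_1,\bar{\t}_1)=-1$, any lifts $\r_1,\t_1\in G$ together with $\z$ generate a non-abelian subgroup $H_1$ of order $p^3$, hence isomorphic to $M(p)$ or $N(p)$. The centralizer $K_1=C_G(H_1)$ corresponds to the symplectic orthogonal $\langle \bar{\r}_1,\bar{\t}_1\rangle^{\perp}$ (plus the center), so $|K_1|=p^{2b-1}$, and $G=H_1\cdot K_1$ with $H_1\cap K_1=Z(G)$; that is, $G$ is the central product $H_1 *_{Z} K_1$. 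Iterating on $K_1$ (which is again extra-special, this time of order $p^{2(b-1)+1}$) gives $G\cong P_1 *_Z P_2 *_Z \cdots *_Z P_b$ where each $P_i\in\{M(p),N(p)\}$.

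The crucial step, and the main obstacle, is the collapse
\begin{equation}
  N(p)*_{Z} N(p) \;\cong\; M(p)*_{Z} M(p),
\end{equation}
which implies that up to isomorphism the number of $N(p)$-factors can be taken to be $0$ or $1$. I would prove this by exhibiting, inside $N(p)*_Z N(p)=\langle \r,\t,\z\rangle *_Z \langle \r',\t',\z\rangle$, explicit elements $\R_1,\T_1,\R_2,\T_2$ of order $p$ pairwise commuting across the two ``blocks'' and satisfying $[\R_i,\T_i]=\z^{-1}$; for $p$ odd the candidates $\R_1=\r\r', \T_1=\t\t'^{-1}$, $\R_2=\r\t', \T_2=\t\r'^{-1}$ (adjusted by central elements) work after a short computation using that $\r^p=\z$ and $(\r\r')^p=\r^p\r'^p$ in the odd-$p$ case. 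For $p=2$ one uses the classical identification $\mathsf{Q}_8*_Z\mathsf{Q}_8\cong \mathsf{D}_8*_Z\mathsf{D}_8$, which can be verified by counting involutions in each group (both have $2^{2b+1}-2^b$ involutions when $b=2$ suitably balanced) or by writing explicit generators in each group. Combining the decomposition with this collapse yields the two isomorphism classes $\mathsf{H}_{2b+1}(\mathbb{Z}_p)$ (all factors $M(p)$) and $\mathsf{G}_{2b+1}(\mathbb{Z}_p)$ (exactly one $N(p)$ factor).

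Finally I would check the two groups are not isomorphic. For $p$ odd this is immediate from the exponent computation in the first step. For $p=2$ one instead compares the associated quadratic form $q\colon V\to \mathbb{Z}_2$ on the symplectic space $V=G/Z(G)$ given by $q(\bar g)=g^2$; the Arf invariant of $q$ is $0$ for $\mathsf{H}_{2b+1}(\mathbb{Z}_2)$ and $1$ for $\mathsf{G}_{2b+1}(\mathbb{Z}_2)$, distinguishing them. This completes the classification and matches the presentations \eqref{eq:H5} and \eqref{eq:G5}.
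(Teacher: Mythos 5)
Your overall strategy --- decompose an arbitrary extra-special group of order $p^{2b+1}$ as a central product of $b$ non-abelian groups of order $p^3$ via a symplectic hyperbolic-basis argument, then normalize the factors --- is the standard one; the paper itself offers no proof and simply cites \cite[Section 5 of Chapter 5]{Gor07} and \cite[Section 2]{CaPol19} for exactly this argument, and your existence and decomposition steps are sound. The genuine gap is in the ``crucial collapse'': the isomorphism $N(p)*_Z N(p)\cong M(p)*_Z M(p)$ is \emph{false} for odd $p$. Indeed, for odd $p$ the group $M(p)*_Z M(p)$ has exponent $p$ (by the very collection formula $(xy)^p=x^py^p[y,x]^{\binom{p}{2}}$ you invoke, since $p \mid \binom{p}{2}$), whereas $N(p)*_Z N(p)$ contains $N(p)$ as a subgroup and hence an element of order $p^2$. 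Your own computation already exhibits the obstruction: with $\r^p=(\r')^p=\z$ one gets $(\r\r')^p=\z^2\neq 1$, so the candidate $\r\r'$ does not have order $p$, and no central adjustment can repair this because the two groups have different exponents. The correct reduction for odd $p$ is $N(p)*_Z N(p)\cong M(p)*_Z N(p)$ (both of exponent $p^2$), which lets you shrink any positive number of $N(p)$-factors down to exactly one; only for $p=2$ is the reduction $\mathsf{Q}_8 *_Z \mathsf{Q}_8\cong \mathsf{D}_8 *_Z \mathsf{D}_8$, i.e.\ a reduction of the number of $N$-factors modulo $2$. With the lemma stated case-by-case in this way, both cases still land on the same two normal forms $\mathsf{H}_{2b+1}(\mathbb{Z}_p)$ and $\mathsf{G}_{2b+1}(\mathbb{Z}_p)$, and your non-isomorphism arguments (exponent for odd $p$, Arf invariant of the quadratic form for $p=2$) are correct.

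Two minor points: the identity $(xy)^p=x^py^p[y,x]^{\binom{p}{2}}$ in a class-$2$ group is the commutator collection formula, not the Hall--Witt identity; and your parenthetical involution count ``$2^{2b+1}-2^b$'' is not right as written (for instance $\mathsf{D}_8*_Z\mathsf{D}_8$ has $19$ involutions while $\mathsf{D}_8*_Z\mathsf{Q}_8$ has $11$), although counting involutions does indeed distinguish the two types when $p=2$ and would give an alternative verification of the $p=2$ collapse.
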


\begin{remark} \label{rmk:further-commutators}
  In both cases, from the relations above we deduce
  \begin{equation} \label{eq:further-commutators}
    [\mathsf{r}_j^{-1}, \, \mathsf{t}_k]=\mathsf{z}^{\delta_{jk}},
    \quad
    [\mathsf{r}_j^{-1}, \,
    \mathsf{t}_k^{-1}]=\mathsf{z}^{-\delta_{jk}}
  \end{equation}
\end{remark}

\begin{remark} \label{rmk:center-H-G}
  For both groups $\mathsf{H}_{2b+1}(\mathbb{Z}_p)$ and
  $\mathsf{G}_{2b+1}(\mathbb{Z}_p)$, the center coincides with
  the derived
  subgroup and is equal to $\langle  \z  \rangle \simeq \mathbb{Z}_p$. Note that,
  being these groups non-abelian,  this condition implies that their nilpotency class is
  $2$, see \cite[p. 22]{Is08}.
\end{remark}

\begin{remark}
  If $p=2$, we can distinguish the two groups
  $\mathsf{H}_{2b+1}(\mathbb{Z}_p)$
  and $\mathsf{G}_{2b+1}(\mathbb{Z}_p)$ by counting the number
  of elements
  of order $4$.
\end{remark}

\begin{remark} \label{rmk:extra-special-not-CCT}
  The groups $\mathsf{H}_{2b+1}(\mathbb{Z}_p)$
  and $\mathsf{G}_{2b+1}(\mathbb{Z}_p)$ are not \textrm{CCT}-groups. In
  fact, let us take two distinct indices $j, \, k \in \{1, \ldots, b \}$
  and consider the non-central elements $\mathsf{r}_j$, $\mathsf{t}_j$,
  $\mathsf{t}_k$. Then we have
  $[\mathsf{r}_j, \, \mathsf{t}_k]=[\mathsf{t}_k, \,
  \mathsf{t}_j]=1$, but $[\mathsf{r}_j, \, \mathsf{t}_j]=\mathsf{z}^{-1}$.
\end{remark}

We can now analyze the case $|G|=32$.
\begin{proposition} \label{prop:CCT-minore-32}
  Let $G$ be a finite non-abelian group such that $|G|=32$ and $G$
  is not a
  \emph{CCT}-group. Then $G=G(32, \, t)$, where $t \in \{6, \, 7,
  \, 8, \, 43, \, 44, \, 49, \, 50 \}$. Here	$G(32, \,
  49)=\mathsf{H}_5(\mathbb{Z}_2)$ and  $G(32,
  \,50)=\mathsf{G}_5(\mathbb{Z}_2)$
  are the two extra-special groups of order $32$.
\end{proposition}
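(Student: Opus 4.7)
The proof will follow the same template as Proposition~\ref{prop:24-no-CCT}, but applied to the $51$ isomorphism types of groups of order $32$ classified by \verb|GAP4|. Seven of these are abelian and can be discarded, so there are $44$ non-abelian groups to examine. The task splits into two independent pieces: verifying that the seven groups listed in the statement are not \textrm{CCT}-groups, and verifying that the remaining $37$ non-abelian groups of order $32$ are \textrm{CCT}-groups.

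For the seven non-\textrm{CCT} groups, the two extra-special ones $G(32, 49) = \mathsf{H}_5(\mathbb{Z}_2)$ and $G(32, 50) = \mathsf{G}_5(\mathbb{Z}_2)$ have already been handled in Remark~\ref{rmk:extra-special-not-CCT} by means of the triple $(\mathsf{r}_j, \mathsf{t}_k, \mathsf{t}_j)$ with $j \neq k$, using the explicit presentations \eqref{eq:H5} and \eqref{eq:G5}. For each of the remaining five groups $G(32, t)$ with $t \in \{6, 7, 8, 43, 44\}$, I would read off the presentation from Appendix~A and exhibit three non-central elements $x, y, z$ satisfying $[x,y] = [y,z] = 1$ but $[x,z] \neq 1$, which by Definition~\ref{def:CCT} disqualifies the group. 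The nilpotence-class assertions then follow from a direct computation of the lower central series using the same presentations: for the extra-special pair one has $[G,G] = Z(G)$, so $\gamma_3(G) = 1$ and the class is $2$, while for the five groups of type $G(32, t)$ with $t \in \{6, 7, 8, 43, 44\}$ one checks that $[[G,G],G] \neq 1$, so the class equals $3$.

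For the $37$ remaining non-abelian groups of order $32$, the strategy is to apply Part~$\boldsymbol{(3)}$ of Proposition~\ref{prop:small-CCT}: it suffices to produce in each case an abelian subgroup $N$ of index $2$, which is automatically normal. In virtually every case such an $N$ is visible directly from the presentation in Appendix~A, either as a cyclic subgroup of order $16$, as a product $\mathbb{Z}_8 \times \mathbb{Z}_2$ of two commuting cyclic factors, or as the elementary abelian subgroup $(\mathbb{Z}_2)^4$ generated by obvious commuting generators. This is the same kind of argument carried out in the last step of the proof of Proposition~\ref{prop:24-no-CCT}.

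The main obstacle is not conceptual but organizational: there are many groups to go through, and one must avoid mistakes in the bookkeeping. To guard against errors, I would run a parallel verification in \verb|GAP4|, checking for each of the $44$ non-abelian groups of order $32$ whether it has a maximal subgroup that is abelian, and separately whether every non-central centralizer is abelian; the seven groups for which the first check fails and the second check fails should coincide exactly with the list claimed in the statement, confirming both the sufficiency (via Proposition~\ref{prop:small-CCT}(3)) and the necessity of the classification.
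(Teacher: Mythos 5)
Your proposal follows the paper's proof essentially verbatim: the $37$ non-abelian groups outside the list are shown to be CCT-groups by exhibiting an abelian subgroup of index $2$ and invoking Part $\boldsymbol{(3)}$ of Proposition \ref{prop:small-CCT}, while the seven listed groups are disqualified by explicit non-transitive commuting triples, the extra-special pair being handled by Remark \ref{rmk:extra-special-not-CCT}. The one point to make explicit is that for the five non-extra-special groups you must verify $\Gamma_4=\{1\}$ in addition to $\Gamma_3\neq\{1\}$ in order to pin the nilpotency class at exactly $3$ (groups of order $32$ such as $\mathsf{D}_{32}$ have class $4$); the paper checks both conditions.
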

\begin{proof}
  We first do a case-by case analysis showing that, if $t \notin \{6, \,
  7, \, 8, \, 43, \, 44, \, 49, \, 50 \}$, then $G=G(32, \, t)$ contains
  an abelian subgroup $N$ of index $2$, so that $G$ is a CCT-group
  by part $\boldsymbol{(3)}$ of Proposition \ref{prop:small-CCT}. In every
  case, we refer the reader to the presentation given in Table
  \ref{table:32-nonabelian} of Appendix A.
  \begin{itemize}
    \item $G=G(32, \, 2).$ Take $N=\langle x, \, y^2, z \rangle
      \simeq \mathbb{Z}_4
      \times (\mZ_2)^2$.
    \item $G=G(32, \, 4).$ Take $N=\langle x, \, y^2 \rangle
      \simeq (\mZ_4)^2$.
    \item $G=G(32, \, 5).$ Take $N= \langle x, \, y \rangle
      \simeq \mZ_8 \times
      \mZ_2$.
    \item $G=G(32, \, 9).$ Take $N= \langle x, \, y \rangle
      \simeq \mZ_8 \times
      \mZ_2$.
    \item $G=G(32, \, 10).$ Take $N= \langle ix, \, k \rangle
      \simeq \mZ_8
      \times \mZ_2$.
    \item $G=G(32, \, 11).$ Take $N= \langle x, \, y \rangle
      \simeq (\mZ_4)^2$.
    \item $G=G(32, \, 12).$ Take $N= \langle x^2, \, y \rangle
      \simeq (\mZ_4)^2$.
    \item $G=G(32, \, 13).$ Take $N= \langle x^2, \, y \rangle
      \simeq \mZ_8
      \times \mZ_2$.
    \item $G=G(32, \, 14).$ Take $N= \langle x^2, \, y \rangle
      \simeq \mZ_8
      \times \mZ_2$.
    \item $G=G(32, \, 15).$ Take $N= \langle x^2, \, y \rangle
      \simeq \mZ_8
      \times \mZ_2$.
    \item $G=G(32, \, 17).$ Take $N=\langle y \rangle \simeq
      \mZ_{16}$.
    \item $G=G(32, \, 18).$ Take $N=\langle y \rangle \simeq
      \mZ_{16}$.
    \item $G=G(32, \, 19).$ Take $N=\langle y \rangle \simeq
      \mZ_{16}$.
    \item $G=G(32, \, 20).$ Take $N=\langle x \rangle \simeq
      \mZ_{16}$.
    \item $G=G(32, \, 22).$ Take $N=\langle w  \rangle \times
      \langle x, \,
      y \rangle \simeq \mZ_{8} \times (\mZ_2)^2$.
    \item $G=G(32, \, 23).$ Take $N=\langle z  \rangle \times
      \langle x, \,
      y^2 \rangle \simeq \mZ_4 \times (\mZ_2)^2$.
    \item $G=G(32, \, 24).$ Take $N= \langle x, \, y \rangle
      \simeq (\mZ_4)^2$.
    \item $G=G(32, \, 25).$ Take $N=\langle z  \rangle \times
      \langle y^2 \rangle
      \simeq (\mZ_4)^2$.
    \item $G=G(32, \, 26).$ Take $N=\langle z  \rangle \times
      \langle i \rangle
      \simeq (\mZ_4)^2$.
    \item $G=G(32, \, 27).$ Take $N=\langle x, \, y, \, a, \,
      b  \rangle \simeq
      (\mZ_2)^4$.
    \item $G=G(32, \, 28).$ Take $N=\langle x, \, y, \, z
      \rangle \simeq
      \mathbb{Z}_4 \times  (\mathbb{Z}_2)^2$.
    \item $G=G(32, \, 29).$ Take $N=\langle x, \, i, \, z
      \rangle \simeq
      \mathbb{Z}_4 \times  (\mathbb{Z}_2)^2$.
    \item $G=G(32, \, 30).$ Take $N=\langle x, \, y, \, z
      \rangle \simeq
      \mathbb{Z}_4 \times  (\mathbb{Z}_2)^2$.
    \item $G=G(32, \, 31).$ Take $N= \langle x, \, y \rangle
      \simeq (\mZ_4)^2$.
    \item $G=G(32, \, 32).$ Take $N= \langle y, \, z \rangle
      \simeq (\mZ_4)^2$.
    \item $G=G(32, \, 33).$ Take $N= \langle x, \, y \rangle
      \simeq (\mZ_4)^2$.
    \item $G=G(32, \, 34).$ Take $N= \langle x, \, y \rangle
      \simeq (\mZ_4)^2$.
    \item $G=G(32, \, 35).$ Take $N=\langle x, \, k \rangle
      \simeq (\mZ_4)^2$.
    \item $G=G(32, \, 37).$ Take $N= \langle x, \, y \rangle
      \simeq \mZ_8
      \times \mZ_2$.
    \item $G=G(32, \, 38).$ Take $N= \langle x, \, y \rangle
      \simeq \mZ_8
      \times \mZ_2$.
    \item $G=G(32, \, 39).$ Take $N=\langle z  \rangle \times
      \langle y \rangle
      \simeq \mZ_{8} \times \mZ_2$.
    \item $G=G(32, \, 40).$ Take $N=\langle z  \rangle \times
      \langle y \rangle
      \simeq \mZ_{8} \times \mZ_2$.
    \item $G=G(32, \, 41).$ Take $N=\langle w  \rangle \times
      \langle x \rangle
      \simeq \mZ_{8} \times \mZ_2$.
    \item $G=G(32, \, 42).$ Take $N= \langle x, \, y \rangle
      \simeq \mZ_8
      \times \mZ_2$.
    \item $G=G(32, \, 46).$ Take $N=\langle z, \,w	\rangle
      \times \langle y
      \rangle \simeq \mZ_4 \times (\mZ_2)^2$.
    \item $G=G(32, \, 47).$ Take $N=\langle z, \,w	\rangle
      \times \langle i
      \rangle \simeq \mZ_4 \times (\mZ_2)^2$.
    \item $G=G(32, \, 48).$ Take $N=\langle x, \, y, \, z
      \rangle \simeq
      \mathbb{Z}_4 \times  (\mathbb{Z}_2)^2$.
  \end{itemize}
  It remains to show that $G=G(32, \, t)$ is not a \textrm{CCT}-group
  for
  $t \in \{6, \, 7, \, 8, \, 43, \, 44, \, 49, \, 50 \}$.

  For $t=49$ and $t=50$ we have the two extra-special cases, that
  are not
  \textrm{CCT}-groups by Remark \ref{rmk:extra-special-not-CCT}. Let us
  now deal
  with the remaining values of $t$: for each of them, we will exhibit three
  non-central
  elements for which commutativity is not a transitive relation, and this will complete the proof.
  \begin{itemize}
    \item $G=G(32, \, 6).$ The center of $G$ is $Z(G)= \langle
      x \rangle \simeq
      \mathbb{Z}_2$. We have $[y, \, w^2]=[w^2, \, w]=1$,
      but $[y, \, w]=x$. 
    \item $G=G(32, \, 7).$ The center of $G$ is $Z(G)= \langle
      w \rangle \simeq
      \mathbb{Z}_2$. We have $[y, \, z]=[z, \, u]=1$,
      but $[y, \, u]=w$. 
    \item $G=G(32, \, 8).$ The center of $G$ is $Z(G)= \langle
      x^4 \rangle \simeq
      \mathbb{Z}_2$. We have $[x, \, x^2]=[x^2, \, y]=1$,
      but $[x, \, y]=z^2$. 
    \item $G=G(32, \, 43).$ The center of $G$ is $Z(G)= \langle
      x^4 \rangle \simeq
      \mathbb{Z}_2$. We have $[x, \, x^2]=[x^2, \, z]=1$,
      but $[x, \, z]=x^4$.
    \item $G=G(32, \, 44).$ The center of $G$ is $Z(G)= \langle
      i^2 \rangle \simeq
      \mathbb{Z}_2$. We have $[x, \, xk]=[xk, \, z]=1$,
      but $[x, \, z]=i^2$. 
  \end{itemize}
 \end{proof}

\section{Diagonal double Kodaira structures} \label{sec:ddks}
For more details on the material contained in this section, we refer the
reader to \cite{CaPol19} and \cite{Pol20}. Let $G$ be a finite group and
let $b, \, n \geq 2$ be two positive integers.

\begin{definition} \label{def:dks}
  A \emph{diagonal double Kodaira structure} of type $(b, \, n)$ on $G$
  is an ordered set of	$4b+1$ generators
  \begin{equation} \label{eq:dks}
    \S = (\r_{11}, \, \t_{11}, \ldots, \r_{1b}, \, \t_{1b}, \;
      \r_{21}, \,
    \t_{21}, \ldots, \r_{2b}, \, \t_{2b}, \; \z ),
  \end{equation}
  with $o(\z)=n$, such that the following relations are satisfied. We
  systematically use the commutator notation in order to indicate relations of
  conjugacy type, writing for instance $[x, \, y]=zy^{-1}$ instead of
  $xyx^{-1}=z$.
  \begin{itemize}
    \item {Surface relations}
      \begin{align} \label{eq:presentation-0}
	& [\r_{1b}^{-1}, \, \t_{1b}^{-1}] \,
	\t_{1b}^{-1} \, [\r_{1 \,b-1}^{-1}, \,
	\t_{1 \,b-1}^{-1}] \, \t_{1\,b-1}^{-1}
	\cdots [\r_{11}^{-1}, \, \t_{11}^{-1}]
	\, \t_{11}^{-1} \, (\t_{11} \, \t_{12}
	\cdots \t_{1b})=\z \\
	& [\r_{21}^{-1}, \, \t_{21}] \, \t_{21} \,
	[\r_{22}^{-1}, \, \t_{22}] \,
	\t_{22}\cdots	[\r_{2b}^{-1}, \, \t_{2b}]
	\, \t_{2b} \, (\t_{2b}^{-1} \,
	\t_{2 \, b-1}^{-1} \cdots \t_{21}^{-1})
	=\z^{-1}
      \end{align}
    \item {Conjugacy action of} $\r_{1j}$
      \begin{align} \label{eq:presentation-1}
	[\r_{1j}, \, \r_{2k}]& =1  &  \mathrm{if}
	\; \; j < k \\
	[\r_{1j}, \, \r_{2j}]& = 1 & \\
	[\r_{1j}, \, \r_{2k}]& =\z^{-1} \, \r_{2k}\,
	\r_{2j}^{-1} \, \z \, \r_{2j}\,
	\r_{2k}^{-1} \; \;&  \mathrm{if} \;  \;
	j > k \\
	& \\
	[\r_{1j}, \, \t_{2k}]& =1 & \mathrm{if}\;
	\; j < k \\
	[\r_{1j}, \, \t_{2j}]& = \z^{-1} & \\
	[\r_{1j}, \, \t_{2k}]& =[\z^{-1}, \, \t_{2k}]
	& \mathrm{if}\;  \; j > k \\
	& \\
	[\r_{1j}, \,\z]& =[\r_{2j}^{-1}, \,\z] &
      \end{align}
    \item {Conjugacy action of} $\t_{1j}$
      \begin{align} \label{eq:presentation-3}
	[\t_{1j}, \, \r_{2k}]& =1 & \mathrm{if}\;
	\; j < k \\
	[\t_{1j}, \, \r_{2j}]& = \t_{2j}^{-1}\,
	\z\, \t_{2j} & \\
	[\t_{1j}, \, \r_{2k}]& =[\t_{2j}^{-1},\,
	\z] \; \; & \mathrm{if}  \;\; j >
	k \\
	& \\
	[\t_{1j}, \, \t_{2k}]& =1 & \mathrm{if}\;
	\; j < k \\
	[\t_{1j}, \, \t_{2j}]& = [\t_{2j}^{-1}, \,
	\z] & \\
	[\t_{1j}, \, \t_{2k}]& =\t_{2j} ^{-1}\,\z\,
	\t_{2j}\, \z^{-1}\, \t_{2k}\,\z\,
	\t_{2j} ^{-1}\,\z^{-1}\, \t_{2j}\,\t_{2k}^{-1}
	\; \;  & \mathrm{if}\;	\;
	j > k \\
	&  \\
	[\t_{1j}, \,\z]& =[\t_{2j}^{-1}, \,\z] &
      \end{align}
  \end{itemize}
\end{definition}

\begin{remark} \label{inverse-actions}
  From \eqref{eq:presentation-1} and \eqref{eq:presentation-3} we
  can infer
  the corresponding conjugacy actions of $\r_{1j}^{-1}$ and
  $\t_{1j}^{-1}$. We
  leave the cumbersome but standard computations to the reader.
\end{remark}

\begin{remark} \label{rmk:no-abelian-dks}
  Abelian groups admit no diagonal double Kodaira structures. Indeed,
  the
  relation $[\r_{1j}, \, \t_{2j}]=\z^{-1}$  in \eqref{eq:presentation-1}
  provides a non-trivial commutator in $G$, because $o(\z)=n$.
\end{remark}

\begin{remark} \label{rmk:commutators-in-the-center}
  Assume that the nilpotency class of $G$ equals $2$; since  $G$ is non-abelian, this is equivalent to $[G, \, G] \subseteq Z(G)$. Then the relations
  defining a diagonal double Kodaira structure of type $(b, \,
  n)$ assume
  the following simplified form.
  \begin{itemize}
    \item Relations expressing the centrality of $\z$
      \begin{equation} \label{eq:presentation-01-simple}
	[\r_{1j}, \z]=[\t_{1j}, \z]=[\r_{2j},
	\z]=[\t_{2j}, \z]=1
      \end{equation}
    \item {Surface relations}
      \begin{align} \label{eq:presentation-0-simple}
	& [\r_{1b}^{-1}, \, \t_{1b}^{-1}] \,
	[\r_{1 \,b-1}^{-1}, \, \t_{1
	\,b-1}^{-1}] \, \cdots [\r_{11}^{-1}, \,
	\t_{11}^{-1}] \,=\z \\
	& [\r_{21}^{-1}, \, \t_{21}] \,
	[\r_{22}^{-1}, \, \t_{22}] \cdots
	[\r_{2b}^{-1}, \, \t_{2b}] =\z^{-1}
      \end{align}
    \item {Conjugacy action of} $\r_{1j}$
      \begin{align} \label{eq:presentation-1-simple}
	[\r_{1j}, \, \r_{2k}]& =1  &  \mathrm{for \;
	all} \; \; j, \, k \\
	[\r_{1j}, \, \t_{2k}]& = \z^{-\delta_{jk}}
	& \\
      \end{align}
    \item {Conjugacy action of} $\t_{1j}$
      \begin{align} \label{eq:presentation-3-simple}
	[\t_{1j}, \, \r_{2k}]& = \z^{\delta_{jk}} & \\
	[\t_{1j}, \, \t_{2k}]& =1 & \mathrm{for \;
	all} \; \; j, \, k \\
      \end{align}
  \end{itemize}
  where $\delta_{jk}$ stands for the Kronecker symbol. 
\end{remark}

The definition of diagonal double Kodaira structure can be motivated by
means of some well-known concepts in geometric topology. Let $\Sigma_b$
be a closed Riemann surface of genus $b$ and let $\mathscr{P}=(p_1, \,
p_2)$ be an ordered set of two distinct points on it. Let $\Delta \subset
\Sigma_b \times \Sigma_b$ be the diagonal. We denote by
$\mathsf{P}_2(\Sigma_b)$ the \emph{pure braid group} of genus $b$ on two
strands, which is isomorphic to the fundamental group $\pi_1(\Sigma_b \times
\Sigma_b \setminus \Delta, \, \mathscr{P})$. By
Gon\c{c}alves-Guaschi's presentation of surface pure braid groups, see
\cite[Theorem 7]{GG04}, \cite[Theorem 1.7]{CaPol19}, we see that
$\mathsf{P}_2(\Sigma_b)$ can be generated by $4b+1$ elements
\begin{equation} \label{eq:generators-braid}
  \rho_{11}, \, \tau_{11},\ldots, \rho_{1b}, \, \tau_{1b}, \, A_{12}
\end{equation}
subject to the following set of relations.
\begin{itemize}
  \item {Surface relations}
    \begin{align} \label{eq:presentation-0-braids}
      & [\rho_{1b}^{-1}, \, \tau_{1b}^{-1}] \,
      \tau_{1b}^{-1} \, [\rho_{1
	\,b-1}^{-1}, \,
      \tau_{1 \,b-1}^{-1}] \, \tau_{1\,b-1}^{-1} \cdots
      [\rho_{11}^{-1}, \,
      \tau_{11}^{-1}]
      \, \tau_{11}^{-1} \, (\tau_{11} \, \tau_{12} \cdots
      \tau_{1b})=A_{12} \\
      & [\rho_{21}^{-1}, \, \tau_{21}] \, \tau_{21} \,
      [\rho_{22}^{-1}, \,
      \tau_{22}] \,
      \tau_{22}\cdots	[\rho_{2b}^{-1}, \, \tau_{2b}]
      \, \tau_{2b} \,
      (\tau_{2b}^{-1} \,
      \tau_{2 \, b-1}^{-1} \cdots \tau_{21}^{-1})
      =A_{12}^{-1}
    \end{align}
  \item {Conjugacy action of} $\rho_{1j}$
    \begin{align} \label{eq:presentation-1-braids}
      [\rho_{1j}, \, \rho_{2k}]& =1  &	\mathrm{if} \;
      \; j < k \\
      [\rho_{1j}, \, \rho_{2j}]& = 1 & \\
      [\rho_{1j}, \, \rho_{2k}]& =A_{12}^{-1} \, \rho_{2k}\,
      \rho_{2j}^{-1} \,
      A_{12} \, \rho_{2j}\,
      \rho_{2k}^{-1} \; \;&  \mathrm{if} \;  \; j > k \\
      & \\
      [\rho_{1j}, \, \tau_{2k}]& =1 & \mathrm{if}\;  \;
      j < k \\
      [\rho_{1j}, \, \tau_{2j}]& = A_{12}^{-1} & \\
      [\rho_{1j}, \, \tau_{2k}]& =[A_{12}^{-1}, \,
      \tau_{2k}]  & \mathrm{if}\;
      \; j > k \\
      & \\
      [\rho_{1j}, \,A_{12}]& =[\rho_{2j}^{-1}, \,A_{12}] &
    \end{align}
  \item {Conjugacy action of} $\tau_{1j}$
    \begin{align} \label{eq:presentation-3-braids}
      [\tau_{1j}, \, \rho_{2k}]& =1 & \mathrm{if}\;  \;
      j < k \\
      [\tau_{1j}, \, \rho_{2j}]& = \tau_{2j}^{-1}\, A_{12}
      \, \tau_{2j} & \\
      [\tau_{1j}, \, \rho_{2k}]& =[\tau_{2j}^{-1},\,
      A_{12}] \; \; & \mathrm{if}
      \;\; j >
      k \\
      & \\
      [\tau_{1j}, \, \tau_{2k}]& =1 & \mathrm{if}\; \;
      j < k \\
      [\tau_{1j}, \, \tau_{2j}]& = [\tau_{2j}^{-1}, \,
      A_{12}] & \\
      [\tau_{1j}, \, \tau_{2k}]& =\tau_{2j} ^{-1}\,A_{12}
      \, \tau_{2j}\,
      A_{12}^{-1}\, \tau_{2k} \,A_{12} \,
      \tau_{2j} ^{-1}\,A_{12}^{-1}\,
      \tau_{2j}\,\tau_{2k}^{-1} \; \;  &
      \mathrm{if}\;  \;
      j > k \\
      &  \\
      [\tau_{1j}, \,A_{12}]& =[\tau_{2j}^{-1}, \,A_{12}] &
    \end{align}
\end{itemize}
Here the elements $\rho_{ij}$ and $\tau_{ij}$ are the braids depicted in
Figure \ref{fig1}, whereas $A_{12}$ is the braid depicted in Figure
\ref{fig2}.

\begin{figure}[H]
  \begin{center}
    \includegraphics*[totalheight=3 cm]{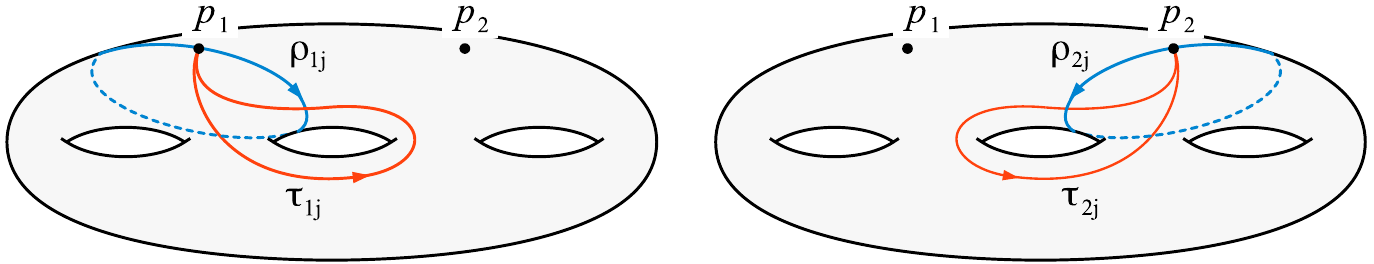}
    \caption{The pure braids $\rho_{1j}$ and $\rho_{2j}$ on
      $\Sigma_b$. If
      $\ell \neq i$, the path corresponding to $\rho_{ij}$
      and $\tau_{ij}$ based
    at $p_{\ell}$ is the constant path.} \label{fig1}
  \end{center}
\end{figure}

\begin{figure}[H]
  \begin{center}
    \includegraphics*[totalheight=1.5 cm]{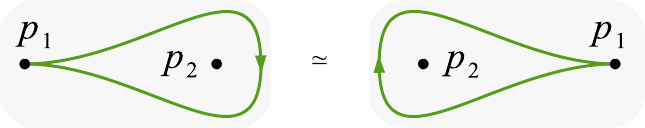}
    \caption{The pure braid $A_{12}$ on $\Sigma_b$} \label{fig2}
  \end{center}
\end{figure}

\begin{remark} \label{rmk:A12}
  Under the identification of $\mathsf{P}_2(\Sigma_b)$ with $\pi_1(\Sigma_b
  \times \Sigma_b \setminus \Delta, \, \mathscr{P})$, the generator $A_{12} \in
  \mathsf{P}(\Sigma_b)$
  represents the homotopy class $\gamma_{\Delta} \in \pi_1(\Sigma_b \times
  \Sigma_b \setminus \Delta, \, \mathscr{P})$ of a loop  in
  $\Sigma_b \times \Sigma_b$ that ``winds once" around the diagonal
  $\Delta$.
\end{remark}

We can now state the following
\begin{proposition} \label{prop:varphi}
  A finite group $G$ admits a diagonal double Kodaira structure  of
  type $(b,
  \,n)$ if and only if there is a surjective group homomorphism
  \begin{equation} \label{eq:varphi}
    \varphi \colon \mathsf{P}_2(\Sigma_b) \to G
  \end{equation}
  such that $\varphi(A_{12})$ has order $n$.
\end{proposition}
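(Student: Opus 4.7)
The plan is to apply von Dyck's theorem (the universal property of group presentations), after noting that the defining relations \eqref{eq:presentation-0}–\eqref{eq:presentation-3} of a diagonal double Kodaira structure are obtained from the Gonçalves–Guaschi relations \eqref{eq:presentation-0-braids}–\eqref{eq:presentation-3-braids} of $\mathsf{P}_2(\Sigma_b)$ by the uniform substitution $\rho_{ij} \leftrightarrow \r_{ij}$, $\tau_{ij} \leftrightarrow \t_{ij}$, $A_{12} \leftrightarrow \z$. This tautological correspondence is the heart of the proposition, and both implications reduce to it.

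For the $(\Leftarrow)$ direction, I start with a surjective homomorphism $\varphi \colon \mathsf{P}_2(\Sigma_b) \to G$ such that $o(\varphi(A_{12}))=n$, and I set
\begin{equation}
\r_{ij} := \varphi(\rho_{ij}), \quad \t_{ij} := \varphi(\tau_{ij}), \quad \z := \varphi(A_{12}).
\end{equation}
Surjectivity of $\varphi$ guarantees that the ordered tuple $\mathfrak{S} = (\r_{11}, \t_{11}, \ldots, \r_{2b}, \t_{2b}, \z)$ generates $G$, and the relations of Definition \ref{def:dks} are satisfied because they are the images under $\varphi$ of the defining relations of $\mathsf{P}_2(\Sigma_b)$. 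The order condition on $\z$ is exactly the hypothesis on $\varphi(A_{12})$.

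For the $(\Rightarrow)$ direction, suppose $\mathfrak{S}$ is a diagonal double Kodaira structure of type $(b, n)$ on $G$. Let $F$ be the free group on the $4b+1$ symbols $\rho_{ij}, \tau_{ij}, A_{12}$, and let $\tilde\varphi \colon F \to G$ be the unique homomorphism extending $\rho_{ij} \mapsto \r_{ij}$, $\tau_{ij} \mapsto \t_{ij}$, $A_{12} \mapsto \z$. Because the relations \eqref{eq:presentation-0-braids}–\eqref{eq:presentation-3-braids} become, after this substitution, precisely the relations \eqref{eq:presentation-0}–\eqref{eq:presentation-3} which hold in $G$ by hypothesis, the normal subgroup of $F$ generated by the defining relators of $\mathsf{P}_2(\Sigma_b)$ lies in $\ker \tilde\varphi$. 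Von Dyck's theorem therefore yields a well-defined homomorphism $\varphi \colon \mathsf{P}_2(\Sigma_b) \to G$ with $\varphi(A_{12}) = \z$. Surjectivity follows from the fact that $\mathfrak{S}$ generates $G$, and the condition $o(\z) = n$ gives $o(\varphi(A_{12})) = n$.

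There is no real obstacle in this argument; the only point requiring attention is a careful visual comparison of the two lists of relations to confirm they match term by term (including, for instance, that the cyclic ordering in the surface relations and the sign conventions in the conjugation relations are identical). Once this bookkeeping is done, the proposition is a direct consequence of the universal property of finitely presented groups. I would therefore write the proof very briefly, essentially as a remark that the correspondence $\rho_{ij} \mapsto \r_{ij}$, $\tau_{ij} \mapsto \t_{ij}$, $A_{12} \mapsto \z$ identifies the two sets of relations, and then invoke von Dyck in both directions.
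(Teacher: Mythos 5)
Your proposal is correct and follows essentially the same route as the paper: the paper's own proof simply sets $\r_{ij}=\varphi(\rho_{ij})$, $\t_{ij}=\varphi(\tau_{ij})$, $\z=\varphi(A_{12})$ in one direction and observes that the same assignment defines the required homomorphism in the other, which is exactly your von Dyck argument made explicit. The only difference is that you spell out the universal-property bookkeeping that the paper leaves implicit.
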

\begin{proof}
  If such a $\varphi \colon \mathsf{P}_2(\Sigma_b) \to G$ exists,
  we can obtain
  a diagonal double Kodaira structure on $G$ by setting
  \begin{equation} \label{eq:from-braid-to-structure}
    \r_{ij}=\varphi(\rho_{ij}), \quad \t_{ij}=\varphi(\tau_{ij}),
    \quad
    \z=\varphi({A_{12}}).
  \end{equation}
  Conversely, if $G$ admits a diagonal double Kodaira structure, then
  \eqref{eq:from-braid-to-structure} defines a group homomorphism
  $\varphi
  \colon \mathsf{P}_2(\Sigma_b) \to G$ with the desired properties.
\end{proof}
The braid group $\mathsf{P}_2(\Sigma_b)$ is the middle term of two split
short exact sequences
\begin{equation} \label{eq:oggi}
  1\to \pi_1(\Sigma_b \setminus \{p_i\}, \, p_j) \to \mathsf{P}_2(\Sigma_b) \to
  \pi_1(\Sigma_b, \, p_i) \to 1,
\end{equation}
where $\{i, \, j\} = \{1, \, 2\}$, induced by the two natural projections
of pointed topological spaces
\begin{equation} \label{eq:natural-proj}
  (\Sigma_b \times \Sigma_b \setminus \Delta, \,
  \mathscr{P}) \to (\Sigma_b, \, p_i),
\end{equation}
see \cite[Theorem 1]{GG04}. Since we have
\begin{equation} \label{eq:K1-and-K2}
  \begin{split}
    \pi_1(\Sigma_b \setminus \{p_2\}, \, p_1)&=\langle  \rho_{11}, \,
    \tau_{11}, \ldots,
    \rho_{1b}, \, \tau_{1b}, \; A_{12}	\rangle \\	\pi_1(\Sigma_b
      \setminus \{p_1\}, \,
    p_2)&=\langle  \rho_{21}, \, \tau_{21}, \ldots, \rho_{2b},
    \, \tau_{2b}, \;
    A_{12}	\rangle,
  \end{split}
\end{equation}
it follows that the two subgroups
\begin{equation}
  \begin{split}
    K_1&:=\langle  \r_{11}, \, \t_{11}, \ldots, \r_{1b}, \,
    \t_{1b}, \; \z
    \rangle \\
    K_2&:=\langle  \r_{21}, \, \t_{21}, \ldots, \r_{2b}, \,
    \t_{2b}, \; \z	\rangle
  \end{split}
\end{equation}
are both normal in $G$, and that there are two short exact sequences
\begin{equation} \label{eq:K1K2}
  \begin{split}
    &1 \to K_1 \to G \to Q_2 \to 1 \\
    &1 \to K_2 \to G \to Q_1 \to 1,
  \end{split}
\end{equation}
such the elements $\r_{21}, \, \t_{21}, \ldots, \r_{2b}, \, \t_{2b}$
yield a complete system of coset representatives for $Q_2$, whereas the
elements
$\r_{11}, \, \t_{11}, \ldots, \r_{1b}, \, \t_{1b}$ yield a complete system
of coset
representatives for $Q_1$.

Let us now give a couple of definitions, whose geometrical meaning will
become clear in Section \ref{sec:ddkf}, see in particular Proposition
\ref{prop:invariant-S-G} and Remark \ref{rmk:sdks-characterization}.

\begin{definition} \label{def:signature-ddks}
  Let $\mathfrak{S}$ be a diagonal double Kodaira structure
  of type $(b, \, n)$ on a finite group $G$. Its \emph{signature} is
  defined as
  \begin{equation} \label{eq:signature-ddks}
    \sigma(\mathfrak{S})=\frac{1}{3}\,|G|\,(2b-2)\left(1-\frac{1}{n^2}\right).
  \end{equation}
\end{definition}

\begin{definition} \label{def:strongdks}
  A diagonal double Kodaira structure on $G$ is called
  \emph{strong}
  if $K_1=K_2=G$.
\end{definition}
For later use, let us write down the special case consisting of a diagonal
double Kodaira structure of
type $(2, \, n)$. It is an ordered set of nine generators of $G$
\begin{equation}
  (\r_{11}, \, \t_{11}, \,  \r_{12}, \, \t_{12}, \, \r_{21}, \,
    \t_{21}, \,
  \r_{22}, \, \t_{22}, \, \z ),
\end{equation}
with $o(\z)=n$, subject to the following relations.
\reqnomode
\begin{equation} \label{eq:ddks-genus-2}
  \begin{aligned}
    \mathbf{(S1)} & \,\, [\r_{12}^{-1}, \, \t_{12}^{-1}] \,
    \t_{12}^{-1} \,
    [\r_{11}^{-1}, \, \t_{11}^{-1}] \, \t_{11}^{-1}\, (\t_{11}
    \, \t_{12}) =
    \z & \\ \mathbf{(S2)} & \, \, [\r_{21}^{-1}, \, \t_{21}]
    \; \t_{21} \;
    [\r_{22}^{-1}, \, \t_{22}] \, \t_{22}\, (\t_{22}^{-1} \,
    \t_{21}^{-1})=
    \z^{-1} \\
    & \\
    \mathbf{(R1)} & \, \, [\r_{11}, \, \r_{22}]=1 & \mathbf{(R6)}
    & \, \,
    [\r_{12}, \, \r_{22}]=1 \\
    \mathbf{(R2)} & \, \, [\r_{11}, \, \r_{21}]=1 &
    \mathbf{(R7)} & \, \, [\r_{12}, \, \r_{21}]=
    \z^{-1}\,\r_{21}\,\r_{22}^{-1}\,\z\,\r_{22}\,\r_{21}^{-1} \\
    \mathbf{(R3)} & \, \, [\r_{11}, \, \t_{22}]=1 & \mathbf{(R8)}
    & \, \,
    [\r_{12}, \, \t_{22}]=\z^{-1} \\
    \mathbf{(R4)} & \, \, [\r_{11}, \, \t_{21}]=\z^{-1} &
    \mathbf{(R9)} & \, \,
    [\r_{12}, \, \t_{21}]=[\z^{-1}, \, \t_{21}] \\
    \mathbf{(R5)} & \, \, [\r_{11}, \, \z]=[\r_{21}^{-1}, \,
    \z] & \mathbf{(R10)}
    & \, \, [\r_{12}, \, \z]=[\r_{22}^{-1}, \, \z] \\
    & \\
    \mathbf{(T1)} & \, \, [\t_{11}, \, \r_{22}]=1 & \mathbf{(T6)}
    & \, \,
    [\t_{12}, \, \r_{22}]= \t_{22}^{-1}\, \z \, \t_{22} \\
    \mathbf{(T2)} & \, \, [\t_{11}, \, \r_{21}]=\t_{21}^{-1}\,
    \z \, \t_{21}
    & \mathbf{(T7)} & \, \, [\t_{12}, \, \r_{21}]= [\t_{22}^{-1},
    \, \z] \\
    \mathbf{(T3)} & \, \, [\t_{11}, \, \t_{22}]=1 & \mathbf{(T8)}
    & \, \,
    [\t_{12}, \, \t_{22}]=[\t_{22}^{-1}, \, \z] \\
    \mathbf{(T4)} & \, \, [\t_{11}, \, \t_{21}]=[\t_{21}^{-1},
    \, \z] &
    \mathbf{(T9)} & \, \, [\t_{12}, \, \t_{21}]=
    \t_{22}^{-1}\, \z \,\t_{22} \,\z^{-1} \,\t_{21} \,\z
    \,\t_{22}^{-1}\,\z^{-1}
    \,\t_{22}\,\t_{21}^{-1} \\
    \mathbf{(T5)} & \, \, [\t_{11}, \, \z]=[\t_{21}^{-1}, \,
    \z] & \mathbf{(T10)}
    & \, \, [\t_{12}, \, \z]=[\t_{22}^{-1}, \, \z] \\
  \end{aligned}
\end{equation}
\leqnomode

\begin{remark} \label{rmk::ddks-genus-2-simplified}
  When $[G, \, G] \subseteq Z(G)$, we have
  \begin{equation} \label{z in the center-genus 2}
    \begin{split}
      &[\r_{11},\, \z]=[\t_{11},\, \z]=[\r_{12},\,
      \z]=[\t_{12},\, \z]=1 \\
      &[\r_{21},\, \z]=[\t_{21},\, \z]=[\r_{22},\,
      \z]=[\t_{22},\, \z]=1
    \end{split}
  \end{equation}
  and the previous relations become
  \begin{equation} \label{eq:ddks-genus-2-commutators-in-the-center}
    \begin{aligned}
      \mathbf{(S1')} & \,\, [\r_{12}^{-1}, \, \t_{12}^{-1}]
      \,  [\r_{11}^{-1}, \,
      \t_{11}^{-1}] =  \z & \\ \mathbf{(S2')} & \, \,
      [\r_{21}^{-1}, \, \t_{21}] \;
      [\r_{22}^{-1}, \, \t_{22}]= \z^{-1} \\
      & \\
      \mathbf{(R1')} & \, \, [\r_{11}, \, \r_{22}]=1 &
      \mathbf{(R6')} & \, \,
      [\r_{12}, \, \r_{22}]=1 \\
      \mathbf{(R2')} & \, \, [\r_{11}, \, \r_{21}]=1 &
      \mathbf{(R7')} & \, \,
      [\r_{12}, \, \r_{21}]= 1 \\
      \mathbf{(R3')} & \, \, [\r_{11}, \, \t_{22}]=1 &
      \mathbf{(R8')} & \, \,
      [\r_{12}, \, \t_{22}]=\z^{-1} \\
      \mathbf{(R4')} & \, \, [\r_{11}, \, \t_{21}]=\z^{-1}
      & \mathbf{(R9')} & \, \,
      [\r_{12}, \, \t_{21}]=1 \\ & \\
      \mathbf{(T1')} & \, \, [\t_{11}, \, \r_{22}]=1 &
      \mathbf{(T6')} & \, \,
      [\t_{12}, \, \r_{22}]=	\z  \\
      \mathbf{(T2')} & \, \, [\t_{11}, \, \r_{21}]=\z  &
      \mathbf{(T7')} & \, \,
      [\t_{12}, \, \r_{21}]= 1 \\
      \mathbf{(T3')} & \, \, [\t_{11}, \, \t_{22}]=1 &
      \mathbf{(T8')} & \, \,
      [\t_{12}, \, \t_{22}]=1 \\
      \mathbf{(T4')} & \, \, [\t_{11}, \, \t_{21}]=1 &
      \mathbf{(T9')} & \, \,
      [\t_{12}, \, \t_{21}]=1 \\
    \end{aligned}
  \end{equation}
\end{remark}

\section{Structures on groups of order at most
\texorpdfstring{$32$}{32}}
\label{sec:DDKS}

\subsection{Prestructures} \label{subsec:prestructures}

\begin{definition} \label{def:kodaira-prestructure}
  Let $G$ be a finite group. A \emph{prestructure}
  on $G$ is an ordered set of nine elements
  \begin{equation}
    (\r_{11}, \, \t_{11}, \,  \r_{12}, \, \t_{12}, \, \r_{21},
      \, \t_{21}, \,
    \r_{22}, \, \t_{22}, \, \z ),
  \end{equation}
  with $o(\z)=n \geq 2$, subject to the relations $(\mathrm{R1}), \ldots,
  (\mathrm{R10})$, $(\mathrm{T1}), \ldots, (\mathrm{T10})$ in
  \eqref{eq:ddks-genus-2}.
\end{definition}
In other words, the nine elements must satisfy all the relations
defining a diagonal double Kodaira structure of type $(2, \, n)$, except the
surface relations. In particular, no abelian group admits prestructures. Note
that we are \emph{not} requiring that the
elements of the prestructure generate  $G$.

\begin{proposition} \label{prop:structure-implies-prestructure}
  If a finite group $G$ admits a diagonal double Kodaira structure of
  type $(b, \, n)$, then it admits a prestructure
  with $o(\z)=n$.
\end{proposition}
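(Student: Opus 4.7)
The plan is to observe that a prestructure is essentially a diagonal double Kodaira structure of type $(2,n)$ from which the two surface relations have been removed, and that the relations (R1)--(R10), (T1)--(T10) in \eqref{eq:ddks-genus-2} form a \emph{subset} of the relations defining a structure of type $(b,n)$ for every $b \geq 2$. The strategy is therefore to extract the nine elements associated with the indices $j, k \in \{1,2\}$ from the given structure $\mathfrak{S}$ of type $(b,n)$ and to verify, by a direct case analysis, that they satisfy the prestructure axioms.

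More precisely, let $\mathfrak{S} = (\r_{11}, \t_{11}, \ldots, \r_{1b}, \t_{1b}, \r_{21}, \t_{21}, \ldots, \r_{2b}, \t_{2b}, \z)$ be a diagonal double Kodaira structure of type $(b,n)$ on $G$. Since $b \geq 2$, we consider the ordered $9$-tuple
\begin{equation}
  (\r_{11}, \, \t_{11}, \, \r_{12}, \, \t_{12}, \, \r_{21}, \, \t_{21}, \, \r_{22}, \, \t_{22}, \, \z).
\end{equation}
The requirement $o(\z) = n \geq 2$ is part of the definition of $\mathfrak{S}$, so it is inherited for free.

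It remains to check that this tuple satisfies the relations (R1)--(R10) and (T1)--(T10) of Definition \ref{def:kodaira-prestructure}. These are precisely the instances of the conjugacy relations \eqref{eq:presentation-1} and \eqref{eq:presentation-3} of the type $(b,n)$ structure in which the indices $(j,k)$ range over $\{1,2\} \times \{1,2\}$. Concretely, the four cases $(j,k) = (1,1),\,(1,2),\,(2,1),\,(2,2)$ of \eqref{eq:presentation-1} reproduce exactly the ten relations (R1)--(R10), and similarly the same four cases applied to \eqref{eq:presentation-3} reproduce (T1)--(T10); the dichotomy $j<k$ versus $j>k$ in the type $(b,n)$ relations matches the asymmetry between, for example, (R1) and (R7) in the prestructure axioms.

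There is no real obstacle here: the statement is essentially a bookkeeping observation. The only subtle point worth emphasizing is that the surface relations \eqref{eq:presentation-0} of the type $(b,n)$ structure are \emph{not} required for a prestructure, and that the prestructure definition explicitly does not demand that the nine chosen elements generate $G$; hence the restriction to the first two indices causes no problem, even though $\langle \r_{11}, \t_{11}, \r_{12}, \t_{12}, \r_{21}, \t_{21}, \r_{22}, \t_{22}, \z \rangle$ is in general a proper subgroup of $G$.
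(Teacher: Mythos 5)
Your proposal is correct and is essentially the paper's own proof: the authors likewise take the nine elements indexed by $j,k\in\{1,2\}$ from the given structure and observe that they satisfy all the defining relations except the surface relations, which a prestructure does not require. Your additional remarks (that the restricted conjugacy relations reproduce exactly $(\mathrm{R1})$--$(\mathrm{R10})$, $(\mathrm{T1})$--$(\mathrm{T10})$, and that generation of $G$ is not demanded) simply spell out what the paper leaves implicit.
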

\begin{proof}
  Consider the ordered set of nine elements
  $(\r_{11}, \, \t_{11}, \,  \r_{12}, \, \t_{12}, \, \r_{21}, \,
    \t_{21},
  \, \r_{22}, \, \t_{22}, \, \z )$
  in Definition \ref{def:dks} and the relations satisfied by them,
  with the exception of the surface relations.
\end{proof}

\begin{remark} \label{remark:noncentral}
  Let $G$ be a finite group that admits a prestructure. Then $\z$ and
  all its conjugates are non-trivial elements of $G$ and so, from relations
  $\mathrm{(R4)}$,
  $\mathrm{(R8)}$, $\mathrm{(T2)}$, $\mathrm{(T6)}$, it
  follows that $\r_{11}, \, \r_{12}, \, \r_{21}, \, \r_{22}$ and
  $\t_{12}, \, \t_{12},
  \,	\t_{21}, \t_{22}$ are non-central elements of $G$.
\end{remark}

\begin{proposition} \label{prop:no-structure-CCT}
  If $G$ is a \emph{CCT}-group, then $G$ admits no prestructures and,
  subsequently, no diagonal double Kodaira structures.
\end{proposition}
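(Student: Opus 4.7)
The plan is to argue by contradiction, exploiting the characterization of CCT-groups given by Proposition \ref{prop:CCT}$\boldsymbol{(3)}$: the centralizer of every non-central element is abelian. So suppose $G$ is a CCT-group and that it admits a prestructure
\begin{equation*}
(\r_{11}, \, \t_{11}, \,  \r_{12}, \, \t_{12}, \, \r_{21}, \, \t_{21}, \,  \r_{22}, \, \t_{22}, \, \z ).
\end{equation*}
By Remark \ref{remark:noncentral}, both $\r_{11}$ and $\r_{22}$ are non-central in $G$, hence the centralizers $C_G(\r_{11})$ and $C_G(\r_{22})$ are abelian.

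The first step is to use the centralizer of $\r_{11}$. Relations $\mathrm{(R1)}$ and $\mathrm{(R3)}$ show that $\r_{22}$ and $\t_{22}$ both belong to $C_G(\r_{11})$. Since this subgroup is abelian, I obtain
\begin{equation*}
[\r_{22}, \, \t_{22}] = 1,
\end{equation*}
that is, $\t_{22} \in C_G(\r_{22})$. The second step is to move to the centralizer of $\r_{22}$. Relation $\mathrm{(R6)}$ says that $\r_{12} \in C_G(\r_{22})$, and from the previous step $\t_{22} \in C_G(\r_{22})$ as well. Using again that this centralizer is abelian (as $\r_{22}$ is non-central), I deduce $[\r_{12}, \, \t_{22}] = 1$. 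But relation $\mathrm{(R8)}$ reads $[\r_{12}, \, \t_{22}] = \z^{-1}$, so $\z = 1$, contradicting the requirement $o(\z) = n \geq 2$.

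There is no real obstacle: the whole argument is a two-step walk through the relations, and the main conceptual point is merely to choose the right pair of centralizers. (One could also phrase the first step as a direct application of transitivity of commutativity on the non-central elements $\r_{22}, \, \r_{11}, \, \t_{22}$, without invoking Proposition \ref{prop:CCT}$\boldsymbol{(3)}$.) The final sentence of the proposition, that no diagonal double Kodaira structure exists either, is immediate from Proposition \ref{prop:structure-implies-prestructure}, since any such structure in particular yields a prestructure with the same $\z$.
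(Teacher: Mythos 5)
Your proof is correct and follows essentially the same strategy as the paper's: two applications of CCT-transitivity (equivalently, abelianness of centralizers of non-central elements) chaining the given commutation relations into $[\r_{12},\,\t_{22}]=1$, which contradicts $(\mathrm{R8})$. The only difference is cosmetic — the paper chains through $(\mathrm{R6})$, $(\mathrm{T1})$, $(\mathrm{T3})$ via the intermediate identity $[\r_{12},\,\t_{11}]=1$, while you chain through $(\mathrm{R1})$, $(\mathrm{R3})$, $(\mathrm{R6})$ via $[\r_{22},\,\t_{22}]=1$ — and both routes are equally valid.
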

\begin{proof}
  The
  second statement
  is a direct consequence of the first one (see Proposition
  \ref{prop:structure-implies-prestructure}), hence
  it suffices then to check
  that $G$ admits no prestructures. Otherwise, keeping in mind Remark
  \ref{remark:noncentral}, we see that $\mathrm{(R6)}$ and $\mathrm{(T1)}$
  imply
  $[\r_{12}, \, \t_{11}]=1$. From this and $\mathrm{(T3)}$
  we get
  $[\r_{12}, \, \t_{22}]=1$, that contradicts $\mathrm{(R8)}$.
\end{proof}

Given a finite group $G$, we define the \emph{socle} of $G$, denoted by
$\mathrm{soc}(G)$, as the intersection of all non-trivial, normal subgroups
of $G$. For instance, $G$ is simple if and only if $\mathrm{soc}(G)=G$.

\begin{definition} \label{def:monolithic-group}
  A finite group $G$ is called \emph{monolithic} if $\mathrm{soc}(G)
  \neq
  \{1\}$. Equivalently, $G$ is monolithic if it contains precisely
  one minimal
  non-trivial, normal subgroup.
\end{definition}

\begin{example} \label{ex:extra-special-is-monolithic}
  If $G$ is an extra-special $p$-group, then $G$ is monolithic and
  $\operatorname{soc}(G)=Z(G)$. Indeed, since $Z(G) \simeq \mathbb{Z}_p$ is
  normal in $G$, by definition of socle we always have $\operatorname{soc}(G)
  \subseteq Z(G)$. On the other hand, every non-trivial, normal subgroup
  of an
  extra-special group contains the center (see \cite[Exercise 9
  p. 146]{Rob96}),
  hence $Z(G) \subseteq \operatorname{soc}(G)$.
\end{example}

\begin{proposition} \label{prop:monolithic-argument} The following holds.
  \begin{itemize}
    \item[$\boldsymbol{(1)}$] Assume that $G$  admits a
      prestructure, whereas
      no proper quotient of $G$ does. Then $G$ is monolithic
      and $\z \in
      \mathrm{soc}(G)$.
    \item[$\boldsymbol{(2)}$] Assume that $G$ admits a
      prestructure, whereas
      no proper subgroup of $G$ does. Then the elements
      of the prestructure
      generate $G$.
  \end{itemize}
\end{proposition}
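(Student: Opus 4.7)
The approach is to exploit the fact that the prestructure relations $(\mathrm{R1})$--$(\mathrm{R10})$ and $(\mathrm{T1})$--$(\mathrm{T10})$ are purely conjugacy/commutator relations: they are preserved by any group homomorphism, and they already hold inside the subgroup generated by the nine participating elements. The \emph{only} ingredient in the definition of a prestructure that behaves non-trivially under quotient or subgroup operations is the order condition $o(\z) \geq 2$. Both statements will be formal consequences of this observation.

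For part $\boldsymbol{(1)}$, I would take an arbitrary non-trivial normal subgroup $N \trianglelefteq G$ and push the prestructure forward via the canonical projection $\pi \colon G \to G/N$. Because $\pi$ is a homomorphism, the images $\pi(\r_{ij}), \pi(\t_{ij}), \pi(\z)$ satisfy the twenty conjugacy relations automatically; the only way they can fail to form a prestructure on the proper quotient $G/N$ is that $\pi(\z)$ has order $1$, i.e., $\z \in N$. Since by hypothesis no proper quotient admits a prestructure, we conclude that $\z \in N$ for every non-trivial normal $N \trianglelefteq G$, whence $\z \in \mathrm{soc}(G)$. As $o(\z)=n \geq 2$ gives $\z \neq 1$, this simultaneously forces $\mathrm{soc}(G) \neq \{1\}$, so $G$ is monolithic.

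For part $\boldsymbol{(2)}$, I would set $H = \langle \r_{11}, \t_{11}, \r_{12}, \t_{12}, \r_{21}, \t_{21}, \r_{22}, \t_{22}, \z \rangle \leq G$ and observe that every relation in the prestructure already takes place inside $H$ (each side of every relation is visibly a word in the listed elements), while the order condition $o(\z)=n\geq 2$ is inherited unchanged from $G$. Hence the same nine elements constitute a prestructure on $H$; by the hypothesis that no proper subgroup of $G$ admits one, $H$ is not proper, so $H=G$.

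I do not anticipate any genuine obstacle: the proposition is essentially a soft functoriality statement. The only point deserving care, which I would highlight in the write-up, is the contrast between the prestructure relations --- which, being entirely internal, survive both quotients and the passage to $\langle \cdot \rangle$ --- and the full set of surface relations of Definition \ref{def:dks}, whose analogous statements would be more delicate.
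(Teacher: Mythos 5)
Your argument is correct and coincides with the paper's own proof: part $\boldsymbol{(1)}$ is proved by pushing the prestructure forward to $G/N$ and noting that the only obstruction is $\bar{\z}=1$, i.e.\ $\z\in N$, forcing $\z\in\mathrm{soc}(G)\neq\{1\}$; part $\boldsymbol{(2)}$ is the observation that the prestructure lives in $\langle\mathfrak{S}\rangle$. Your added remark contrasting this with the surface relations is a fair gloss but not needed for the proof.
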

\begin{proof}
  $\boldsymbol{(1)}$ Let $\mathfrak{S}=(\r_{11}, \, \t_{11}, \,
    \r_{12}, \,
    \t_{12}, \, \r_{21}, \, \t_{21},
  \, \r_{22}, \, \t_{22}, \, \z)$ be a prestructure in $G$.
  Assume that there is a non-trivial
  normal subgroup
  $N$ of $G$ such that $\z \notin N$. Then $\bz \in G/N$ is non-trivial,
  and so
  $\bar{\mathfrak{S}}=(\br_{11}, \, \bt_{11},
    \,	\br_{12},
    \, \bt_{12}, \, \br_{21}, \, \bt_{21}, \, \br_{22}, \, \bt_{22},
  \, \bz )$
  is a prestructure in the quotient group $G/N$, contradiction. Therefore
  we must have $\z \in \mathrm{soc}(G)$, in particular, $G$ is
  monolithic. \smallskip

  $\boldsymbol{(2)}$ Clear, because every prestructure $\mathfrak{S}$
  in $G$
  is also a prestructure in the subgroup $\langle \mathfrak{S} \rangle$.
\end{proof}

\begin{corollary} \label{cor:z-in-p-group}
  Given a prestructure on an extra-special $p$-group $G$, the element $\z$
  is a generator of $Z(G) \simeq \mathbb{Z}_p$.
\end{corollary}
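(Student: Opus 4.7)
The plan is to show that $\z$ lies in $Z(G)$; since $Z(G)\simeq \mathbb{Z}_p$ is cyclic of prime order $p$ by Definition \ref{def:extra-special}, and $\z$ is non-trivial (its order is $n\geq 2$), this will automatically force $\z$ to be a generator of $Z(G)$, with $n=p$.

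To verify centrality of $\z$, I would pass to the quotient $V := G/Z(G)$, which by the definition of an extra-special $p$-group is elementary abelian, hence in particular abelian. Writing $\br_{ij}, \bt_{ij}, \bz$ for the images of the prestructure elements in $V$, the reduction modulo $Z(G)$ of every commutator relation $(\mathrm{R}i)$, $(\mathrm{T}i)$ from Definition \ref{def:kodaira-prestructure} still holds in $V$. Now I invoke any one of the relations that has $\z^{\pm 1}$ on the right-hand side and a single commutator on the left — for instance $(\mathrm{R4})$, namely $[\r_{11}, \, \t_{21}] = \z^{-1}$ — which descends to
\begin{equation}
[\br_{11}, \, \bt_{21}] = \bz^{-1}
\end{equation}
in $V$. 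Because $V$ is abelian the left-hand side is trivial, forcing $\bz=1$ in $V$, i.e.\ $\z\in Z(G)$. Combined with $\z\neq 1$, this yields the claim.

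There is no real obstacle here: the argument is essentially the observation that abelian quotients kill every prestructure, and $G/Z(G)$ is the largest abelian quotient of an extra-special group. One could phrase the same conclusion via Proposition \ref{prop:monolithic-argument}$\boldsymbol{(1)}$ together with Example \ref{ex:extra-special-is-monolithic} (which identifies $\operatorname{soc}(G)$ with $Z(G)$), but the direct passage to $G/Z(G)$ above is the most economical route.
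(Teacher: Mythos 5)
Your proof is correct. The paper reaches the same conclusion by a slightly more roundabout route: it first observes that every proper quotient of an extra-special group is abelian and therefore admits no prestructure, and then invokes Proposition \ref{prop:monolithic-argument}$\boldsymbol{(1)}$ together with Example \ref{ex:extra-special-is-monolithic} to place $\z$ in $\operatorname{soc}(G)=Z(G)$ --- precisely the alternative you mention at the end. Your direct argument is more economical: relation $(\mathrm{R4})$ exhibits $\z^{-1}$ as a commutator, so $\z\in[G,\,G]=Z(G)$ without any appeal to socles or monolithic groups. Both proofs rest on the same underlying fact (a relation forcing $\z$ to be a non-trivial commutator cannot survive in an abelian quotient), but it is worth noting why the paper prefers the socle formulation: for the non-extra-special groups of order $32$ handled in Proposition \ref{prop:32-no-extra-special-no-structure}, the derived subgroup strictly contains the center, so the shortcut ``$\z\in[G,\,G]$ hence $\z\in Z(G)$'' fails there, and the monolithic argument is the one that carries over. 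For the extra-special case itself, your route is the cleaner of the two.
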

\begin{proof}
  If $G$ is extra-special, every proper quotient of $G$ is abelian,
  hence it admits no prestructures. The result now follows from
  Example \ref{ex:extra-special-is-monolithic} and Proposition
  \ref{prop:monolithic-argument} (1).
\end{proof}

Note that, by Corollary \ref{cor:z-in-p-group}, in the case of extra-special
$p$-groups the choice of calling
$\z$ the element in the prestructure is coherent with presentations
\eqref{eq:H5} and \eqref{eq:G5}. The case of diagonal double Kodaira
structures on extra-special groups of order $32$ will be studied in Subsection
\ref{subsec:DDKS-small32 extra-special}.

\subsection{The case \texorpdfstring{$|G|<32$}{|G|<32}}
\label{subsec:DDKS-small-minor 32}

\begin{proposition} \label{prop:less-32-no-ddks}
  If $|G|<32$, then $G$ admits no diagonal double Kodaira structures.
\end{proposition}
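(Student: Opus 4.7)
The plan is to reduce, using the classification of small non-abelian groups and the CCT machinery of Section~\ref{sec:CCT}, to the single exceptional group $\mathsf{S}_4$, and then to dispatch it by an argument that combines Proposition~\ref{prop:monolithic-argument} with the surface relations of a diagonal double Kodaira structure.

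If $G$ is abelian, Remark~\ref{rmk:no-abelian-dks} applies. Otherwise, every non-abelian order below $32$ different from $24$ is either a product of at most three (not necessarily distinct) primes or equal to $16=2^{4}$, so Proposition~\ref{prop:small-CCT}(1),(2) forces $G$ to be a CCT-group, and Propositions~\ref{prop:no-structure-CCT} and \ref{prop:structure-implies-prestructure} then exclude any diagonal double Kodaira structure. For $|G|=24$, Proposition~\ref{prop:24-no-CCT} singles out $\mathsf{S}_4$ as the unique non-CCT group; the other order-$24$ non-abelian groups are handled by the same CCT argument.

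It remains to rule out $G=\mathsf{S}_4$. Suppose, for contradiction, that $\mathsf{S}_4$ carries a diagonal double Kodaira structure of type $(2,n)$; by Proposition~\ref{prop:structure-implies-prestructure} it then carries a prestructure. Since the proper quotients of $\mathsf{S}_4$ are $\mathsf{S}_3$ (a CCT-group of order $6$), $\mathbb{Z}_2$, and $\{e\}$, none of which admits a prestructure, Proposition~\ref{prop:monolithic-argument}(1) forces $\z\in\mathrm{soc}(\mathsf{S}_4)=\mathsf{V}_{4}$, so $\bz=e$ in $\mathsf{S}_{3}=\mathsf{S}_4/\mathsf{V}_{4}$. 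Projecting the relations of~\eqref{eq:ddks-genus-2} to $\mathsf{S}_{3}$, every conjugate of $\z$ becomes trivial, and a direct inspection of $(\mathrm{R1})$--$(\mathrm{T10})$ shows that each $\br_{1j},\bt_{1j}$ commutes with every $\br_{2k},\bt_{2k}$. The two subgroups
\begin{equation*}
H_{1}=\langle \br_{11},\bt_{11},\br_{12},\bt_{12}\rangle, \qquad H_{2}=\langle \br_{21},\bt_{21},\br_{22},\bt_{22}\rangle
\end{equation*}
of $\mathsf{S}_{3}$ therefore commute element-wise, and the generation hypothesis yields $H_{1}H_{2}=\mathsf{S}_{3}$. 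Since $Z(\mathsf{S}_{3})=\{e\}$ and the centralizer in $\mathsf{S}_{3}$ of any non-trivial proper subgroup coincides with that subgroup, the only configurations compatible with both the commutation and $H_{1}H_{2}=\mathsf{S}_{3}$ are $(H_{1},H_{2})=(\mathsf{S}_{3},\{e\})$ and $(\{e\},\mathsf{S}_{3})$. In either case one of the two halves of the structure lies entirely in $\mathsf{V}_{4}$, so the corresponding surface relation $(\mathrm{S1})$ or $(\mathrm{S2})$ of~\eqref{eq:ddks-genus-2}, evaluated in the abelian group $\mathsf{V}_{4}$ where all commutators are trivial, telescopes to $e=\z^{\pm 1}$, contradicting $o(\z)\geq 2$.

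The main obstacle is the $\mathsf{S}_{4}$ step, since the abstract CCT argument of Proposition~\ref{prop:no-structure-CCT} does not apply and the prestructure relations alone only locate $\z$ inside $\mathsf{V}_{4}$. The essential extra input is the use of the surface relations, evaluated in $\mathsf{V}_{4}$ after projection to $\mathsf{S}_{3}$, to exclude the last two configurations.
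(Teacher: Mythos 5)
Your reduction to $\mathsf{S}_4$ is the same as the paper's (Corollary \ref{cor:small-CCT}, Propositions \ref{prop:small-CCT}, \ref{prop:24-no-CCT}, \ref{prop:no-structure-CCT}), but your treatment of $\mathsf{S}_4$ itself is a genuinely different route. The paper proves the stronger statement that $\mathsf{S}_4$ admits no \emph{prestructure}: it never invokes the surface relations, and instead runs a case analysis inside $\mathsf{S}_4$ (ruling out $3$-cycles among the $\r_{ij},\t_{ij}$ one at a time via $(\mathrm{R1})$--$(\mathrm{T10})$) to force all nine elements into $C_{\mathsf{S}_4}(\z)\simeq\mathsf{D}_8$, contradicting generation via Proposition \ref{prop:monolithic-argument}. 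You instead pass to the quotient $\mathsf{S}_3=\mathsf{S}_4/\mathsf{V}_4$, where $\bz=e$ trivializes every conjugacy relation, use the fact that in $\mathsf{S}_3$ the centralizer of any non-trivial proper subgroup is itself to conclude that one of the two halves $H_1,H_2$ is trivial, and then kill that configuration with a surface relation telescoping in the abelian group $\mathsf{V}_4$. All of these steps check out, and your argument is shorter; what the paper's version buys is the prestructure-level conclusion, which is independent of the genus $b$.

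That independence is exactly where your proof has a genuine gap: you only exclude structures of type $(2,n)$, while the proposition asserts that $\mathsf{S}_4$ admits no structure of any type $(b,n)$, $b\ge 2$. You cannot repair this by citing Proposition \ref{prop:structure-implies-prestructure}, because the prestructure extracted from a type-$(b,n)$ structure carries no surface relations, and your argument essentially needs $(\mathrm{S1})$/$(\mathrm{S2})$: the two surviving configurations $(H_1,H_2)=(\mathsf{S}_3,\{e\})$ and $(\{e\},\mathsf{S}_3)$ are not excluded by the conjugacy relations alone. The fix is easy but must be said: for arbitrary $b$ the conjugacy relations of Definition \ref{def:dks} still collapse to commutativity modulo $\mathsf{V}_4$ (each right-hand side cancels to the identity once $\z$ is killed), so the same dichotomy holds for $H_1=\langle\br_{11},\ldots,\bt_{1b}\rangle$ and $H_2=\langle\br_{21},\ldots,\bt_{2b}\rangle$, and the general surface relations \eqref{eq:presentation-0} likewise telescope to $e=\z^{\pm1}$ inside the abelian group $\mathsf{V}_4$. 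Either carry out the argument at that level of generality, or follow the paper in proving that $\mathsf{S}_4$ admits no prestructure at all.
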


\begin{proof}
  By Corollary \ref{cor:small-CCT}, Proposition \ref{prop:24-no-CCT} and
  Proposition \ref{prop:no-structure-CCT}, it remains only to check that
  the symmetric group $\mathsf{S}_4$ admits no prestructures. We
  start by observing that
  \begin{equation*}
    \mathrm{soc}(\mathsf{S}_4)=\mathsf{V}_4= \langle (1 \, 2)(3\,
    4), \, (1 \,
    3)(2\, 4) \rangle
  \end{equation*}
  and so, by part $\mathrm{(1)}$ of
  Proposition \ref{prop:monolithic-argument},
  if $\mathfrak{S}$ is a prestructure on $\mathsf{S}_4$ then $\z \in
  \mathsf{V}_4$. Let $\x, \, \y\in \mathsf{S}_4$ be such that $[\x, \,
  \y]=\z$. Examining the tables of subgroups of
  $\mathsf{S}_4$ given in \cite{S4}, by straightforward computations we deduce that either
  $\x, \, \y \in C_{\mathsf{S}_4}(\z) \simeq \mathsf{D}_8$ or $\x, \, \y
  \in \mathsf{A}_4$. Every
  pair in $\mathsf{A}_4$ includes at least a $3$-cycle and so, if $[\x, \,
  \y]=\z$ and both $\x$ and $\y$ have even order, then $\x$ and $\y$
  centralize $\z$.

  If $\x \in \mathsf{S}_4$ is a $3$-cycle, then $C_{\mathsf{S}_4}(\x) =
  \langle \x \rangle \simeq \mathbb{Z}_3$. So, from relations
  $(\mathrm{R1})$,
  $(\mathrm{R2})$, $(\mathrm{R3})$, $(\mathrm{R6})$, it follows that,
  if one of
  the elements $\r_{11}, \, \r_{12}, \, \r_{21}, \, \r_{22}, \,
  \t_{22}$ is a
  $3$-cycle, then all these elements generate the same cyclic
  subgroup. This
  contradicts $(\mathrm{R8})$, hence $\r_{11}, \, \r_{12}, \,
  \r_{21}, \,
  \r_{22}, \, \t_{22}$ all have even order.

  Let us look now at relation $(\mathrm{R8})$. Since $\r_{12}, \,
  \t_{22}$ have
  even order, from the previous remark we infer $\r_{12}, \, \t_{22} \in
  C_{\mathsf{S}_4}(\z)$. Let us consider  $\r_{11}$. If $\r_{11}$
  belongs
  to $\mathsf{A}_4$, being an element of even order  it must be
  conjugate to $\z$, and so it commutes with $\z$;
  otherwise, by $(\mathrm{R4})$, both $\r_{11}$
  and $\t_{21}$ commute with $\z$. Summing up, in any case we have
  $\r_{11} \in C_{\mathsf{S}_4}(\z)$.

  Relation $(\mathrm{R5})$ can be rewritten as $\r_{11}\r_{21} \in
  C_{\mathsf{S}_4}(\z)$, hence $\r_{21} \in
  C_{\mathsf{S}_4}(\z)$. Analogously,
  relation $(\mathrm{R10})$ can be rewritten as $\r_{12}\r_{22} \in
  C_{\mathsf{S}_4}(\z)$, hence $\r_{22} \in C_{\mathsf{S}_4}(\z)$.

  Using relation $\mathrm{(R9)}$, we get $\r_{12} \z \in
  C_{\mathsf{S}_4}(\t_{21})$. Since $\r_{12}$ and $\z$ commute and
  their orders are powers of $2$, it follows that $o(\r_{12} \z)$ is also
  a power of $2$. Therefore $\t_{21}$
  cannot be
  a $3$-cycle, otherwise $C_{\mathsf{S}_4}(\t_{21}) \simeq \mathbb{Z}_3$
  and so
  $\r_{12}\z=1$ that, in turn, would imply $[\r_{12}, \, \t_{22}]=1$,
  contradicting
  $(\mathrm{R8})$. It
  follows that $\t_{21}$ has even order and so, since $\r_{11}$
  has even
  order as well, by $\mathrm{(R4)}$ we infer $\t_{21} \in
  C_{\mathsf{S}_4}(z)$.

  Now we can rewrite $(\mathrm{T2})$ as $[\t_{11}, \, \r_{21}]= \z$. If
  $\t_{11}$ were a $3$-cycle, from $(\mathrm{T1})$ we would get $\r_{22}
  \in C_{\mathsf{S}_4}(\t_{11})\simeq \mathbb{Z}_3$, a contradiction
  since
  $\r_{22}$ has even order. Thus $\t_{11}$ has even order and so it
  belongs to
  $C_{\mathsf{S}_4}(z)$, because $\r_{21}$ has even order,
  too. Analogously,
  by using $(\mathrm{T6})$ and	$(\mathrm{T7})$, we infer $\t_{12}\in
  C_{\mathsf{S}_4}(z)$.

  Summarizing, if $\mathfrak{S}$ were a prestructure on $\mathsf{S}_4$
  we
  should have
  \begin{equation}
    \langle \mathfrak{S} \rangle = C_{\mathsf{S}_4}(z) \simeq
    \mathsf{D}_8,
  \end{equation}
  contradicting part $(\mathrm{2})$ of Proposition
  \ref{prop:monolithic-argument}.
\end{proof}

\subsection{The case \texorpdfstring{$|G|=32$}{|G|=32} and
\texorpdfstring{$G$}{G} non-extra-special}
\label{subsec:DDKS-small32 no extra-special}

We start by proving the following partial strengthening of Proposition
\ref{prop:no-structure-CCT}.

\begin{proposition} \label{prop:no-structure-CCT-enhanced}
  Let $G$ be a finite non-abelian group, and let $H$ be the subgroup of $G$
  generated by those elements whose centralizer is non-abelian. If $H$ is
  abelian and $[H : Z(G)] \leq 4$, then $G$ admits no prestructures with
  $\z \in Z(G)$.
\end{proposition}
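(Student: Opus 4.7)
I will argue by contradiction: suppose $\mathfrak{S}=(\r_{11}, \t_{11}, \r_{12}, \t_{12}, \r_{21}, \t_{21}, \r_{22}, \t_{22}, \z)$ is a prestructure on $G$ with $\z \in Z(G)$. Because $\z$ is central, the defining relations collapse to the simplified form of Remark \ref{rmk::ddks-genus-2-simplified}, so the eight elements $\r_{ij}, \t_{ij}$ are all non-central and the only non-commuting pairs among them are the four ``diagonal'' pairs
\[(\r_{11},\t_{21}),\ (\r_{12},\t_{22}),\ (\t_{11},\r_{21}),\ (\t_{12},\r_{22}),\]
each of which has commutator $\z^{\pm 1}$. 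Set $X := \{\r_{ij},\t_{ij}\}$ and $A := \{x \in G : C_G(x) \text{ non-abelian}\}$, so that $A \subseteq H$.

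The core combinatorial step rests on two observations. First, since $H$ is abelian, no diagonal pair can have both elements in $H$, hence $|X \cap A| \leq 4$. Second, for each diagonal pair $(x,y)$ there are exactly six commuting paths of length three of the form $x - u - v - y$ in $X$ using the cross-first-index edges prescribed by the prestructure, and a direct check shows that these six paths induce a $6$-cycle on the remaining six vertices of $X$ (the \emph{bridge $6$-cycle} of the pair). If both $u$ and $v$ had abelian centralizers, then $x, v \in C_G(u)$ would force $[x,v]=1$, and then $x, y \in C_G(v)$ would force $[x,y]=1$, contradicting the non-commutativity of the pair; hence $X \cap A$ must be a vertex cover of each of the four bridge $6$-cycles. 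A minimum vertex cover of a $6$-cycle has size $3$, and enumerating the $16$ transversals of the four diagonal pairs shows that only two of them simultaneously cover all four bridge $6$-cycles: the \emph{index-$1$ block} $\{\r_{11},\r_{12},\t_{11},\t_{12}\}$ and, by the $1 \leftrightarrow 2$ symmetry of the prestructure relations, the \emph{index-$2$ block} $\{\r_{21},\r_{22},\t_{21},\t_{22}\}$. It therefore suffices to treat the first.

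In this case, observation (i) actually forces every index-$2$ element out of $H$, so $X \cap H = \{\r_{11},\r_{12},\t_{11},\t_{12}\}$; these four elements therefore commute pairwise inside $H$, and their images in $\bar H := H/Z(G)$ are all non-trivial. I then show these four images are pairwise distinct by ruling out each of the six possible coincidences with a commutator test against an index-$2$ generator: for instance, $\bar\r_{11}=\bar\r_{12}$ would imply $[\r_{11},w]=[\r_{12},w]$ for every $w \in G$, whereas taking $w = \t_{21}$ gives $\z^{-1} = 1$, a contradiction. Thus $\bar H$ has at least five distinct elements, contradicting $[H:Z(G)] \leq 4$. The main obstacle is the combinatorial enumeration in the middle paragraph: identifying the bridge $6$-cycle structure and checking that only the two ``block'' transversals survive the joint vertex-cover constraint.
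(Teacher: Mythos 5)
Your proof is correct and is built from the same three ingredients as the paper's: abelian $H$ contains at most one member of each non-commuting cross pair; two consecutive abelian centralizers propagate commutativity along a commuting path of length three, which forces one entire index block into $H$; and the four elements of that block are non-central with pairwise distinct centralizers (each fails to commute with a different element of the opposite block), giving four distinct non-trivial classes in $H/Z(G)$ and contradicting $[H:Z(G)]\leq 4$. The only real divergence is in how the middle step is organized: the paper runs a linear chain of deductions starting from $(\mathrm{R4}')$ and the case split $\r_{11}\notin H$ versus $\t_{21}\notin H$, whereas you recast it as a vertex-cover condition on four $6$-cycles ($K_{3,3}$ minus a perfect matching, one for each diagonal pair) and enumerate transversals; this makes the dichotomy between the two index blocks symmetric and transparent, at the cost of a combinatorial check. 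Two points to tighten: the prestructure prescribes only the cross-index commutators, so ``the only non-commuting pairs among them are the four diagonal pairs'' should say the only \emph{prescribed} non-commuting pairs (harmless, since your paths use only cross edges); and $X\cap A$ is a priori only a \emph{partial} transversal, so before invoking the enumeration of the sixteen full transversals you should add the one-line observation that a common vertex cover of all four bridge $6$-cycles must contain at least three vertices of each complementary $6$-set, hence meets every diagonal pair and is a full transversal.
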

\begin{proof}
  First of all, remark that $Z(G)$ is a (normal) subgroup of $H$ because $G$
  is non-abelian. Assume now, by contradiction, that the elements $(\r_{11},
    \, \t_{11}, \,
    \r_{12}, \, \t_{12}, \, \r_{21}, \,
  \t_{21}, \, \r_{22}, \, \t_{22}, \, \z)$ form a prestructure on $G$,
  with $\z \in Z(G)$. Then these elements satisfy
  relations $(\mathrm{R1}'), \ldots, (\mathrm{R9}')$, $(\mathrm{T1}'),
  \ldots, (\mathrm{T9}')$ in
  \eqref{eq:ddks-genus-2-commutators-in-the-center}.
  As $H$ is abelian, $(\mathrm{R4}')$ implies that at least one
  between $\r_{11}, \, \t_{21}$ does not belong to $H$.

  Let us assume $\r_{11} \notin H$. Thus $C_G(\r_{11})$ is abelian,
  and so
  $(\mathrm{R2}')$
  and $(\mathrm{R3}')$ yield $[\r_{21}, \, \t_{22}]=1$. From this, using
  $(\mathrm{T2}')$ and $(\mathrm{T3}')$, we infer that $C_G(\t_{22})$ is
  non-abelian. Similar considerations show that $C_G(\r_{21})$ and
  $C_G(\r_{22})$ are non-abelian,  and so we have
  $\r_{21}, \, \r_{22}, \, \t_{22} \in H$. Using $(\mathrm{T2}')$,
  $(\mathrm{T6}')$,
  $(\mathrm{R8}')$, together with the fact that $H$ is abelian, we deduce
  $\t_{11}, \, \t_{12}, \, \r_{12} \notin H$.
  In particular, $C_G(\r_{12})$ is abelian, so $(\mathrm{R7}')$ and
  $(\mathrm{R9}')$ yield $[\r_{21}, \, \t_{21}]=1$; therefore
  $(\mathrm{T2}')$ and $(\mathrm{T4}')$ imply that $C_G(\t_{21})$
  is non-abelian, and so $\t_{21} \in H$.
  Summing up, we have proved that the four elements
  $\r_{21}, \, \t_{21}, \, \r_{22}, \, \t_{22}$
  belong to $H$; since they are all non-central,
  we infer that they yield four non-trivial elements
  in the quotient group $H/Z(G)$. On the other hand,
  we have $[H:Z(G)]\leq 4$, and so $H/Z(G)$ contains
  at most three non-trivial elements; it follows that
  (at least) two among the elements
  $\r_{21}, \, \t_{21}, \, \r_{22}, \, \t_{22}$ have the same image
  in $H/Z(G)$.
  This means that these two elements are of the form $g, \, gz$,
  with $z \in Z(G)$, and so they have the same centralizer.
  But this is impossible: in fact, relations
  \eqref{eq:ddks-genus-2-commutators-in-the-center} show that each
  element in the set $\{\r_{21}, \, \t_{21}, \, \r_{22}, \, \t_{22}\}$
  fails to commute with exactly one element in the set
  $\{\r_{11}, \, \t_{11}, \, \r_{12}, \, \t_{12}\}$, and no
  two elements in $\{\r_{21}, \, \t_{21}, \, \r_{22}, \, \t_{22}\}$
  fail to commute with the same element in
  $\{\r_{11}, \, \t_{11}, \, \r_{12}, \, \t_{12}\}$.

  The remaining case, namely $\t_{21}\notin H$, can be dealt with
  in an analogous way. Indeed, in this situation we obtain $\{\r_{11},
    \, \t_{11},
  \, \r_{12}, \, \t_{12}\}\subseteq H$, that leads to a contradiction
  as before.
\end{proof}

We can now rule out the non-extra-special groups of order $32$.

\begin{proposition} \label{prop:32-no-extra-special-no-structure}
  Let $G$ be a finite group of order $32$ which is not
  extra-special. Then $G$
  admits no diagonal double Kodaira structures.
\end{proposition}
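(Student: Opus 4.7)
The plan is to reduce to the non-CCT, non-extra-special groups of order $32$ and then apply Proposition \ref{prop:no-structure-CCT-enhanced}. By Propositions \ref{prop:CCT-minore-32} and \ref{prop:no-structure-CCT}, one only needs to rule out prestructures on $G = G(32,t)$ for the five values $t \in \{6, 7, 8, 43, 44\}$, since the remaining groups of order $32$ are either CCT (whence excluded by Proposition \ref{prop:no-structure-CCT}) or extra-special (and so treated elsewhere).

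First I would show that any prestructure on such a $G$ must satisfy $\z \in Z(G)$. By Proposition \ref{prop:CCT-minore-32}, in each of the five cases $|Z(G)|=2$, and since $G$ is a $2$-group, every non-trivial normal subgroup of $G$ meets $Z(G)$ non-trivially; as $|Z(G)|=2$, this forces $Z(G)$ itself to be contained in every non-trivial normal subgroup, so $G$ is monolithic with $\operatorname{soc}(G)=Z(G)$. Now if $\mathfrak{S}$ were a prestructure with $\z\notin Z(G)$, then the image $\bar{\z}\in G/Z(G)$ would be non-trivial of order dividing $o(\z)$, and the tuple $(\bar{\r}_{ij},\bar{\t}_{ij},\bar{\z})$ would form a prestructure on $G/Z(G)$, a group of order $16$. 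But the proof of Proposition \ref{prop:less-32-no-ddks}, together with Proposition \ref{prop:no-structure-CCT}, shows that no group of order $<32$ admits a prestructure, a contradiction. Hence $\z\in Z(G)$.

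Next I would invoke Proposition \ref{prop:no-structure-CCT-enhanced}: it remains to verify that, for each of the five groups, the subgroup $H$ generated by the elements whose centralizer in $G$ is non-abelian is itself abelian and satisfies $[H:Z(G)]\leq 4$. This is the heart of the argument and amounts to a direct, case-by-case computation from the presentations collected in Table \ref{table:32-nonabelian} of Appendix A: in each case one enumerates the conjugacy classes, identifies those representatives whose centralizer is non-abelian, takes the subgroup $H$ they generate, and checks both the abelianness of $H$ and the bound $|H|\leq 8$.

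The main obstacle is precisely this final step of case analysis. Although each individual verification is routine, it must be carried out separately in each of the five groups $G(32,6)$, $G(32,7)$, $G(32,8)$, $G(32,43)$, $G(32,44)$. All five have nilpotency class $3$ (as recorded in Proposition \ref{prop:CCT-minore-32}), so one expects $H$ to sit between $Z(G)$ and the second centre $Z_2(G)$, and both the abelianness and the index bound to be visible from the structure of the lower central series in each group. Once the hypotheses of Proposition \ref{prop:no-structure-CCT-enhanced} are verified, that proposition immediately rules out prestructures with $\z\in Z(G)$; combined with the previous reduction, this completes the argument.
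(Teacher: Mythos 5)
Your proposal is correct and follows essentially the same route as the paper: reduction via Propositions \ref{prop:CCT-minore-32} and \ref{prop:no-structure-CCT} to the five groups $G(32,\,t)$ with $t\in\{6,\,7,\,8,\,43,\,44\}$, the quotient argument forcing $\z\in Z(G)$, and then Proposition \ref{prop:no-structure-CCT-enhanced}. The one step you leave undone is the explicit case-by-case identification of $H$ (the paper records, e.g., $H=\langle x,\,y,\,w^2\rangle\simeq(\mathbb{Z}_2)^3$ for $G(32,\,6)$ and $H\simeq\mathbb{Z}_4\times\mathbb{Z}_2$ in the other four cases, each with $[H:Z(G)]=4$), though your description of the required check is accurate and your $p$-group argument for $\operatorname{soc}(G)=Z(G)$ is cleaner than the paper's appeal to ``standard computations''.
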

\begin{proof}
  If $G$ is a CCT-group, then the result follows from Proposition
  \ref{prop:no-structure-CCT}. Thus, by Proposition
  \ref{prop:CCT-minore-32},
  we must only consider the cases $G=G(32, \, t)$, where $t
  \in \{6, \, 7, \, 8, \, 43, \, 44\}$. Standard computations using the
  presentations in
  Table \ref{table:32-nonabelian} of Appendix A show that
  all these groups are
  monolithic, and that for all of them $\mathrm{soc}(G)=Z(G)\simeq
  \mathbb{Z}_2$. 
  Since no proper quotients of $G$ admit
  diagonal double Kodaira structures (Proposition \ref{prop:less-32-no-ddks}),
  it follows from Proposition \ref{prop:monolithic-argument} that every
  diagonal double Kodaira structure on $G$ is such that $\z$ is
  the generator of $Z(G)$. Let $H$ be the subgroup of $G$ generated by
  those elements whose centralizer is non-abelian; by Proposition
  \ref{prop:no-structure-CCT-enhanced} we are now
  done, provided that in every case $H$ is abelian and $[H \,: \,Z(G)]\leq
  4$. Let us now show that this is indeed true, leaving the straightforward
  computations to the reader.
  \begin{itemize}
    \item $G=G(32, \, 6).$ In this case
      $\mathrm{soc}(G)=Z(G)=\langle
      x \rangle$ and $H=\langle x, \, y, \, w^2
      \rangle$. Then $H \simeq
      (\mathbb{Z}_2)^3$ and $[H : Z(G)]=4$.
    \item $G=G(32, \, 7).$ In this case
      $\mathrm{soc}(G)=Z(G)=\langle w
      \rangle$ and $H=\langle z, \, u, \, w \rangle$. Then
      $H \simeq \mathbb{Z}_4
      \times \mathbb{Z}_2$ and $[H
      : Z(G)]=4$.
    \item $G=G(32, \, 8).$ In this case
      $\mathrm{soc}(G)=Z(G)=\langle x^4
      \rangle$ and $H=\langle x^2, \, y, \, z^2
      \rangle$. Then $H \simeq
      \mathbb{Z}_4 \times \mathbb{Z}_2$
      and $[H : Z(G)]=4$.
    \item $G=G(32, \, 43).$ In this case
      $\mathrm{soc}(G)=Z(G)=\langle x^4
      \rangle$ and $H=\langle x^2, \, z \rangle$. Then
      $H \simeq \mathbb{Z}_4
      \times \mathbb{Z}_2$ and $[H
      : Z(G)]=4$.
    \item $G=G(32, \, 44).$ In this case
      $\mathrm{soc}(G)=Z(G)=\langle i^2
      \rangle$ and $H=\langle x, \, k \rangle$. Then
      $H \simeq \mathbb{Z}_4
      \times \mathbb{Z}_2$ and $[H
      : Z(G)]=4$.
  \end{itemize}
  This completes the proof.
\end{proof}

\subsection{The case \texorpdfstring{$|G|=32$}{|G|=32} and
\texorpdfstring{$G$}{G} extra-special}
\label{subsec:DDKS-small32 extra-special}

We are now ready to address the case where $|G|=32$ and $G$ is
extra-special. Let us first recall some additional results on extra-special $p$-groups,
referring the reader to \cite{Win72} for more details.

Let $G$ be an extra-special $p$-group of order $p^{2b+1}$ and $\x, \,
\y \in G$. Setting
$(\bar{\x}, \, \bar{\y})=\bar{a}$ where $[\x, \, \y]=\z^a$, the quotient
group $V=G/Z(G) \simeq (\mathbb{Z}_p)^{2b}$ becomes a non-degenerate
symplectic vector space
over $\mathbb{Z}_p$. Looking at $\eqref{eq:H5}$ and $\eqref{eq:G5}$,
we see that in both cases $G=\mathsf{H}_{2b+1}(\mathbb{Z}_p)$ and
$G=\mathsf{G}_{2b+1}(\mathbb{Z}_p)$ we have
\begin{equation} \label{eq:symplectic-form}
  (\bar{\r}_j, \, \bar{\r}_k)=0, \quad (\bar{\t}_j, \, \bar{\t}_k)=0,
  \quad
  (\bar{\r}_j, \, \bar{\t}_k)= -\delta_{jk}
\end{equation}
for all $j, \, k \in \{1, \ldots, b\}$, so that
\begin{equation} \label{eq:symplectic-basis}
  \bar{\r}_1, \, \bar{\t}_1, \ldots, \bar{\r}_b, \, \bar{\t}_b
\end{equation}
is an ordered symplectic basis for $V \simeq (\mathbb{Z}_p)^{2b}$. If
$p=2$, we
can also set $q(\bar{\x})=\bar{c}$, where $\x^2=\z^c$ and $c \in \{0, \,
1\}$; this is a quadratic form on $V$. If $\bar{\x} \in G/Z(G)$ is expressed
in coordinates, with respect to the symplectic basis
\eqref{eq:symplectic-basis}, by the vector $(\xi_1, \, \psi_1, \ldots,
  \xi_b, \,
\psi_b) \in (\mathbb{Z}_2)^{2b}$,  then a straightforward computation yields
\begin{equation} \label{eq:form-of-quadratic-forms}
  q(\bar{\x})=
  \begin{cases}
    \xi_1\psi_1+\cdots+\xi_b \psi_b, & \textrm{if }
    G=\mathsf{H}_{2b+1}(\mathbb{Z}_2) \\
    \xi_1\psi_1+\cdots+\xi_b \psi_b+ \xi_b^2+\psi_b^2 &
    \textrm{if }
    G=\mathsf{G}_{2b+1}(\mathbb{Z}_2).
  \end{cases}
\end{equation}
These are the two possible normal forms for a non-degenerate quadratic form
of dimension $2b$ over $\mathbb{Z}_2$; they have Arf invariant equal to $0$ and $1$, respectively, see for instance \cite{Dye78} or \cite[Chapter 10]{Li97}. In both cases, the symplectic and the quadratic
form are related by
\begin{equation}
  q(\bar{\x} \bar{\y})=q(\bar{\x})+q(\bar{\y})+(\bar{\x}, \, \bar{\y})
  \quad
  \textrm{for all } \bar{\x}, \, \bar{\y} \in V.
\end{equation}
If $\phi \in \mathrm{Aut}(G)$,
then $\phi$ induces a linear map $\bar{\phi} \in \mathrm{End}(V)$; moreover,
if $p=2$, then $\phi$ acts trivially on $Z(G)=[G, \, G] \simeq \mathbb{Z}_2$,
and this in turn
implies that $\phi$ preserves the symplectic form
on $V$. In other words, if we identify $V$ with $(\mathbb{Z}_2)^{2b}$
via the symplectic basis \eqref{eq:symplectic-basis}, we have $\bar{\phi}
\in \mathsf{Sp}(2b, \, \mathbb{Z}_2)$.

We are now in a position to describe the structure of $\mathrm{Aut}(G)$,
see \cite[Theorem 1]{Win72}.

\begin{proposition} \label{prop:Out(G)}
  Let $G$ be an extra-special group of order $2^{2b+1}$. Then the
  kernel of the group homomorphism $\mathrm{Aut}(G) \to \mathsf{Sp}(2b,
  \, \mathbb{Z}_2)$ given by $\phi \mapsto \bar{\phi}$ is the subgroup
  $\mathrm{Inn}(G)$ of inner automorphisms of $G$. Therefore
  $\mathrm{Out}(G)
  = \mathrm{Aut}(G)/\mathrm{Inn}(G)$ embeds in $\mathsf{Sp}(2b,
  \, \mathbb{Z}_2)$. More precisely, $\mathrm{Out}(G)$ coincides
  with the
  orthogonal group $\mathsf{O}_{\epsilon}(2b, \, \mathbb{Z}_2)$,
  of order
  \begin{equation} \label{eq:order-orthogonal}
    |\mathsf{O}_{\epsilon}(2b, \,
    \mathbb{Z}_2)|=2^{b(b-1)+1}(2^b-\epsilon)
    \prod_{i=1}^{b-1}(2^{2i}-1),
  \end{equation}
  associated with the quadratic form
  $\mathrm{\eqref{eq:form-of-quadratic-forms}}$. Here $\epsilon =
  1$ if $G=\mathsf{H}_{2b+1}(\mathbb{Z}_2)$ and $\epsilon = -1$ if
  $G=\mathsf{G}_{2b+1}(\mathbb{Z}_2)$.
\end{proposition}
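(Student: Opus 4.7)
The plan is to prove the two assertions of the proposition successively: the kernel of the natural map $\mathrm{Aut}(G)\to\mathsf{Sp}(2b,\mathbb{Z}_2)$, $\phi\mapsto\bar{\phi}$, equals $\mathrm{Inn}(G)$, and its image coincides with the orthogonal group $\mathsf{O}_{\epsilon}(2b,\mathbb{Z}_2)$.

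For the kernel computation, the inclusion $\mathrm{Inn}(G)\subseteq\ker$ is immediate: if $\phi(x)=gxg^{-1}$, then $\phi(x)x^{-1}=[g,x]\in[G,G]=Z(G)$ by Remark \ref{rmk:center-H-G}, so $\bar{\phi}=\mathrm{id}$. For the reverse inclusion, assume $\bar{\phi}=\mathrm{id}$ and define $c\colon G\to Z(G)$ by $c(x)=\phi(x)x^{-1}$. The centrality of $Z(G)$ together with the homomorphism property of $\phi$ imply $c(xy)=c(x)c(y)$, so $c$ is a homomorphism into $Z(G)\simeq\mathbb{Z}_2$ and factors through a linear functional $\bar{c}\colon V\to\mathbb{Z}_2$. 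Non-degeneracy of the symplectic form then supplies $\bar{g}\in V$ with $\bar{c}(\bar{x})=(\bar{g},\bar{x})$ for every $\bar{x}\in V$; any lift $g\in G$ of $\bar{g}$ satisfies $[g,x]=c(x)$, hence $\phi(x)=gxg^{-1}$, so $\phi\in\mathrm{Inn}(G)$.

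Next, I would check that $\bar{\phi}\in\mathsf{O}_{\epsilon}(2b,\mathbb{Z}_2)$ for every $\phi\in\mathrm{Aut}(G)$. As $Z(G)$ is a characteristic subgroup of order $2$, it is fixed pointwise by $\phi$. Applying $\phi$ to the relation $[x,y]=\z^{(\bar{x},\bar{y})}$ then yields $(\bar{\phi}(\bar{x}),\bar{\phi}(\bar{y}))=(\bar{x},\bar{y})$, so $\bar{\phi}$ is symplectic, while applying $\phi$ to $x^2=\z^{q(\bar{x})}$ yields $q(\bar{\phi}(\bar{x}))=q(\bar{x})$, so $\bar{\phi}$ preserves the quadratic form \eqref{eq:form-of-quadratic-forms}.

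For surjectivity onto $\mathsf{O}_{\epsilon}(2b,\mathbb{Z}_2)$, given $\psi$ in the orthogonal group I would construct a lift $\phi$ by setting $\phi(\z)=\z$ and choosing arbitrary preimages $\phi(\r_j),\phi(\t_j)\in G$ of $\psi(\bar{\r}_j),\psi(\bar{\t}_j)$; since $\z^2=1$, the two possible lifts in $G$ of a given class in $V$ have the same square, so preservation of $q$ by $\psi$ forces $\phi(\r_j)^2=\z^{q(\bar{\r}_j)}=\r_j^2$ and similarly for $\t_j$, while preservation of the symplectic form by $\psi$ guarantees that the commutator relations in \eqref{eq:H5} or \eqref{eq:G5} are respected. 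Thus $\phi$ extends to a homomorphism that induces $\psi$ on $V$; it is automatically bijective since the images generate $V$ modulo $Z(G)$. The main obstacle is really just the bookkeeping of verifying that every defining relation in the chosen presentation is simultaneously preserved under the lift. Finally, the explicit order formula \eqref{eq:order-orthogonal} is a classical computation for finite orthogonal groups over $\mathbb{Z}_2$ of plus and minus type, which I would cite from a standard reference on classical groups rather than reprove.
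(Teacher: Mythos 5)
Your proof is correct, but it is worth pointing out that the paper does not actually prove this proposition: it simply quotes it from Winter's paper (Theorem 1 of \cite{Win72}), including the order formula \eqref{eq:order-orthogonal}. What you have written is a self-contained reconstruction of the standard argument, and all the steps check out. The kernel computation is right: $c(x)=\phi(x)x^{-1}$ is indeed a homomorphism into $Z(G)\simeq\mathbb{Z}_2$ (since $\phi(y)=c(y)y$ with $c(y)$ central), it kills $[G,G]=Z(G)$ and so descends to a functional on $V$, and non-degeneracy of the symplectic form realizes it as pairing against some $\bar g$, exhibiting $\phi$ as conjugation by a lift $g$. The containment of the image in the orthogonal group follows, as you say, from $\phi$ fixing the characteristic subgroup $Z(G)$ of order $2$ pointwise. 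For surjectivity, the key observation that the two lifts of a class in $V$ have the same square (because $\z$ is central with $\z^2=1$) is exactly what makes the lift of an orthogonal $\psi$ well-defined on generators, and one checks directly against \eqref{eq:H5} and \eqref{eq:G5} that preservation of $q$ handles the relations $\r_j^2=\z^{q(\bar\r_j)}$, $\t_j^2=\z^{q(\bar\t_j)}$ while preservation of the symplectic form handles all the commutator relations; bijectivity then follows from finiteness once one notes that $\z=[\phi(\r_1),\phi(\t_1)]^{-1}$ lies in the image. Deferring the order count to a standard reference on finite orthogonal groups is reasonable and is in any case what the paper itself does. In short: the paper buys the whole statement by citation, whereas you supply a complete proof of everything except the cardinality formula.
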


\begin{corollary} \label{prop:Aut(G)}
  Let $G$ be an extra-special group of order $2^{2b+1}$. We have
  \begin{equation} \label{eq:Aut(G)}
    |\mathrm{Aut}(G)|=2^{b(b+1)+1}(2^b-\epsilon)
    \prod_{i=1}^{b-1}(2^{2i}-1).
  \end{equation}
\end{corollary}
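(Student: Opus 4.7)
The plan is to derive this as a direct consequence of Proposition \ref{prop:Out(G)} combined with the standard isomorphism $\operatorname{Inn}(G)\simeq G/Z(G)$. Since the exact sequence
\begin{equation}
1\to \operatorname{Inn}(G)\to \operatorname{Aut}(G)\to \operatorname{Out}(G)\to 1
\end{equation}
gives $|\operatorname{Aut}(G)|=|\operatorname{Inn}(G)|\cdot|\operatorname{Out}(G)|$, I only need to compute the order of the inner automorphism group and plug in the formula \eqref{eq:order-orthogonal}.

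First I would recall that, by Definition \ref{def:extra-special}, an extra-special $2$-group $G$ of order $2^{2b+1}$ has cyclic center $Z(G)\simeq \mathbb{Z}_2$, so the quotient $V=G/Z(G)$ is elementary abelian of order $2^{2b}$. Hence $|\operatorname{Inn}(G)|=|G/Z(G)|=2^{2b}$. Next I would invoke Proposition \ref{prop:Out(G)}, which identifies $\operatorname{Out}(G)$ with the orthogonal group $\mathsf{O}_{\epsilon}(2b,\mathbb{Z}_2)$ and provides the order
\begin{equation}
|\operatorname{Out}(G)|=2^{b(b-1)+1}(2^b-\epsilon)\prod_{i=1}^{b-1}(2^{2i}-1),
\end{equation}
with $\epsilon=1$ if $G=\mathsf{H}_{2b+1}(\mathbb{Z}_2)$ and $\epsilon=-1$ if $G=\mathsf{G}_{2b+1}(\mathbb{Z}_2)$.

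The only remaining step is an exponent arithmetic check: multiplying the two orders yields a power of $2$ with exponent $2b+b(b-1)+1=b^2+b+1=b(b+1)+1$, while the odd part $(2^b-\epsilon)\prod_{i=1}^{b-1}(2^{2i}-1)$ is inherited unchanged from $|\operatorname{Out}(G)|$. This reproduces formula \eqref{eq:Aut(G)} exactly. There is no real obstacle here, since the heavy lifting has already been performed in Proposition \ref{prop:Out(G)}; the corollary is essentially a bookkeeping statement. The only thing to make sure of is that the identification $\operatorname{Inn}(G)\simeq G/Z(G)$ is applied to the correct center, which is immediate from the extra-special hypothesis.
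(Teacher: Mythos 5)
Your proposal is correct and follows exactly the paper's own argument: the product formula $|\operatorname{Aut}(G)|=|\operatorname{Inn}(G)|\cdot|\operatorname{Out}(G)|$ from Proposition \ref{prop:Out(G)}, the identification $|\operatorname{Inn}(G)|=|G/Z(G)|=2^{2b}$, and the exponent check $2b+b(b-1)+1=b(b+1)+1$. No differences worth noting.
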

\begin{proof}
  By Proposition \ref{prop:Out(G)} we get
  $|\mathrm{Aut}(G)|=|\mathrm{Inn}(G)|
  \cdot |\mathsf{O}_{\epsilon}(2b, \, \mathbb{Z}_2)|$. Since
  $\mathrm{Inn}(G) \simeq G/Z(G)$ has order $2^{2b}$, the claim
  follows from
  \eqref{eq:order-orthogonal}.
\end{proof}

In particular, plugging $b=2$ in \eqref{eq:Aut(G)},  we can compute the
orders of automorphism groups of extra-special groups of order $32$,
namely
\begin{equation} \label{eq:Order-auto-32}
  |\mathrm{Aut}(\mathsf{H}_5(\mathbb{Z}_2))|=1152, \quad
  |\mathrm{Aut}(\mathsf{G}_5(\mathbb{Z}_2))|=1920.
\end{equation}

Assume now that $\mathfrak{S}=(\r_{11}, \, \t_{11}, \, \r_{12}, \, \t_{12},
\, \r_{21}, \, \t_{21}, \, \r_{22}, \, \t_{22}, \z)$ is a diagonal double
Kodaira
structure of type $(2, \, n)$ on an extra-special group $G$ of order
$32$; by Corollary \ref{cor:z-in-p-group}, the element $\z$ is the generator
of $Z(G) \simeq \mathbb{Z}_2$, hence $n=2$. Then
\begin{equation} \label{eq:bar-Sigma}
  \bar{\mathfrak{S}}=(\br_{11}, \, \bt_{11}, \, \br_{12}, \,
    \bt_{12}, \,
  \br_{21}, \, \bt_{21}, \, \br_{22}, \, \bt_{22})
\end{equation}
is an ordered set of generators for the symplectic
$\mathbb{Z}_2$-vector space $V=G/Z(G) \simeq (\mathbb{Z}_2)^4$, and
\eqref{eq:ddks-genus-2-commutators-in-the-center} yields the relations
\begin{equation} \label{eq:symplectic-relations}
  \begin{split}
    &(\br_{12}, \, \bt_{12})+(\br_{11}, \, \bt_{11})=1, \\
    &(\br_{21}, \, \bt_{21})+(\br_{22}, \, \bt_{22})=1, \\
    &(\br_{1j}, \, \bt_{2k})=\delta_{jk}, \quad (\br_{1j}, \,
    \br_{2k})=0 \\
    &(\bt_{1j}, \, \br_{2k})=\delta_{jk}, \quad (\bt_{1j},
    \, \bt_{2k})=0.
  \end{split}
\end{equation}
Conversely, given any set of generators $\bar{\mathfrak{S}}$ of $V$ as in
\eqref{eq:bar-Sigma}, whose elements satisfy \eqref{eq:symplectic-relations},
a diagonal double Kodaira structure of type
$(b, \, n)=(2, \, 2)$ on $G$ inducing $\bar{\mathfrak{S}}$ is necessarily
of the form
\begin{equation}
  \mathfrak{S}=(\r_{11} \z^{a_{11}}, \, \t_{11} \z^{b_{11}}, \,
    \r_{12}\z^{a_{12}}, \, \t_{12}\z^{b_{12}}, \,
    \r_{21}\z^{a_{21}}, \,
    \t_{21}\z^{b_{21}}, \, \r_{22} \z^{a_{22}}, \, \t_{22}\z^{b_{22}},
  \, \z),
\end{equation}
where $a_{ij}, \, b_{ij} \in \{0, \, 1 \}$. This proves the
following

\begin{lemma} \label{lemma:number-DDKS}
  The total number of diagonal double Kodaira structures of type $(b,
  \, n)=(2,
  \, 2)$
  on an extra-special group $G$ of order $32$ is obtained multiplying
  by $2^8$
  the number of ordered sets of generators $\bar{\mathfrak{S}}$
  of $V$ as in $\eqref{eq:bar-Sigma}$, whose
  elements satisfy \eqref{eq:symplectic-relations}. In particular,
  such a
  number does not depend on $G$.
\end{lemma}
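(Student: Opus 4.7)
The plan is to establish a $2^8$-to-one surjection from diagonal double Kodaira structures of type $(2,2)$ on $G$ onto the set of ordered $8$-tuples $\bar{\mathfrak{S}}$ of generators of $V = G/Z(G)$ satisfying \eqref{eq:symplectic-relations}. The key structural fact is that $G$ extra-special of order $32$ forces $[G, G] = Z(G) \simeq \mathbb{Z}_2$, so that $G$ has nilpotency class $2$ and commutators in $G$ are bilinear modulo the centre. Moreover, by Corollary \ref{cor:z-in-p-group}, the entry $\z$ of any structure must equal the unique non-trivial element of $Z(G)$, so $\z$ contributes no freedom at all.

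For the forward direction, given a DDKS $\mathfrak{S}$, I would simply project to $V$ and check that $\bar{\mathfrak{S}}$ has the required properties. Generation of $V$ is immediate from generation of $G$, since $\bar{\z}=0$. For the symplectic relations, I would invoke Remark \ref{rmk::ddks-genus-2-simplified}: because $[G, G] \subseteq Z(G)$, the structure is governed by the simplified relations \eqref{eq:ddks-genus-2-commutators-in-the-center}. Each commutator relation $[\x, \y] = \z^{\pm 1}$ reads off directly as $(\bar{\x}, \bar{\y}) = 1$, and each $[\x, \y] = 1$ as $(\bar{\x}, \bar{\y}) = 0$. The two surface relations $\mathbf{(S1')}$ and $\mathbf{(S2')}$, combined with bilinearity of commutators in a class-$2$ group and the identity $[\x^{-1}, \y^{-1}] = [\x, \y]$ valid there, translate to the two constraints $(\br_{11}, \bt_{11}) + (\br_{12}, \bt_{12}) = 1$ and $(\br_{21}, \bt_{21}) + (\br_{22}, \bt_{22}) = 1$ in $\mathbb{Z}_2$.

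For the reverse direction and the counting, fix a valid $\bar{\mathfrak{S}}$. Since the projection $G \to V$ has fibres of size $|Z(G)|=2$, there are exactly $2^8$ lifts of the $8$ elements of $\bar{\mathfrak{S}}$ to ordered tuples in $G$, parametrized by exponents $a_{ij}, b_{ij} \in \{0, 1\}$. For each lift, adjoin $\z$. I must verify that the resulting $9$-tuple is a DDKS. Generation of $G$ follows from generation of $V$ together with the fact that $\z$ generates $Z(G)$. For the defining relations, the crucial observation is that, since $\z$ is central, replacing any $\x$ by $\x \z^a$ does not alter any commutator $[\x, \y]$ in $G$; hence all commutator and surface relations of \eqref{eq:ddks-genus-2-commutators-in-the-center} are determined by $\bar{\mathfrak{S}}$ alone and hold automatically by assumption. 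This shows the $2^8$ lifts are all DDKS, and they are pairwise distinct as ordered tuples, proving the multiplicative factor.

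Finally, for the independence from $G$, I would note that the symplectic form on $V$ defined by $(\bar{\x}, \bar{\y}) = \bar{a}$ where $[\x, \y] = \z^a$ coincides, for both $\mathsf{H}_5(\mathbb{Z}_2)$ and $\mathsf{G}_5(\mathbb{Z}_2)$, with the standard non-degenerate symplectic form on $(\mathbb{Z}_2)^4$ associated with the basis \eqref{eq:symplectic-basis}; the two groups are distinguished only by the quadratic form $q$, which does not enter \eqref{eq:symplectic-relations}. The main obstacle is less conceptual than bookkeeping: carefully checking that the surface relations reduce, via class-$2$ bilinearity and the peculiarities of working over $\mathbb{Z}_2$ (where $\z = \z^{-1}$), to precisely the two stated symplectic constraints and no more.
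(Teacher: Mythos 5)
Your proposal is correct and follows essentially the same route as the paper: project a structure to $V=G/Z(G)$, use the simplified relations of Remark \ref{rmk::ddks-genus-2-simplified} (valid since $[G,\,G]\subseteq Z(G)$) to obtain \eqref{eq:symplectic-relations}, and observe that the $2^8$ central modifications $\x\mapsto\x\z^{a}$ exhaust the lifts and leave every commutator, hence every defining relation, unchanged. Your write-up is in fact slightly more careful than the paper's, which leaves implicit the verification that all $2^8$ lifts of a valid $\bar{\mathfrak{S}}$ are genuinely structures (and that they generate $G$); you supply both checks explicitly.
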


We are now ready to state the main result of this section.
\begin{theorem} \label{thm:main-algebraic}
  A finite group $G$ of order $32$ admits a diagonal double Kodaira
  structure
  if and only if $G$ is extra-special. In this case, the following
  holds.
  \begin{itemize}
    \item[$\boldsymbol{(1)}$] Both extra-special groups of order $32$
      admit $2211840=1152 \cdot
      1920$ distinct
      diagonal double Kodaira structures of type $(b, \, n)=(2, \,
      2)$. Every such a structure $\mathfrak{S}$ is strong
      and satisfies $\sigma(\mathfrak{S})=16$.
    \item[$\boldsymbol{(2)}$] If $G=G(32,
      \,49)=\mathsf{H}_5(\mathbb{Z}_2)$,
      these structures form $1920$ orbits under the action
      of $\mathrm{Aut}(G)$.
    \item[$\boldsymbol{(3)}$] If $G=G(32,
      \,50)=\mathsf{G}_5(\mathbb{Z}_2)$,
      these structures form $1152$ orbits under the action
      of $\mathrm{Aut}(G)$.
  \end{itemize}
\end{theorem}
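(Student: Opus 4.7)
The ``only if'' direction of the first statement is a consequence of Propositions \ref{prop:less-32-no-ddks}, \ref{prop:no-structure-CCT}, and \ref{prop:32-no-extra-special-no-structure}. For the remaining assertions, the plan is to apply Lemma \ref{lemma:number-DDKS} to each extra-special group $G$ of order $32$: it suffices to count ordered $8$-tuples $\bar{\mathfrak{S}}$ of generators of the symplectic $\mathbb{Z}_2$-vector space $V = G/Z(G)$ subject to \eqref{eq:symplectic-relations}, and then multiply by $2^8$.

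The key structural observation is that the $4 \times 4$ matrix of symplectic pairings between the first and last four entries of $\bar{\mathfrak{S}}$ is the standard symplectic matrix $J$, which has rank $4$. Since a cross-pairing matrix between two subspaces of $V$ has rank at most the dimension of either subspace, the first and last halves of $\bar{\mathfrak{S}}$ each span $V$ and hence form bases; in particular the generation hypothesis of Lemma \ref{lemma:number-DDKS} is automatic, and once the first basis $(\br_{11}, \bt_{11}, \br_{12}, \bt_{12})$ is chosen, the cross-pairing constraints determine the last four elements uniquely as a re-indexed version of the symplectic dual basis. Counting therefore reduces to counting ordered bases $(\br_{11}, \bt_{11}, \br_{12}, \bt_{12})$ of $V$ whose Gram matrix $M$ satisfies the first surface relation $(\br_{11}, \bt_{11}) + (\br_{12}, \bt_{12}) = 1$, together with a compatibility check for the second surface relation. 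I group bases by their Gram matrix, exploiting that $\mathsf{Sp}(4, \mathbb{Z}_2)$ acts simply transitively on bases with a prescribed Gram matrix, contributing $|\mathsf{Sp}(4, \mathbb{Z}_2)| = 720$ bases each. Writing a symmetric zero-diagonal matrix in terms of its six off-diagonal entries $a = M_{12}, b, c, d, e, f = M_{34}$, the Pfaffian formula gives $\det M = af + be + cd$ over $\mathbb{Z}_2$, and the conditions $a + f = 1$ and $\det M = 1$ reduce to $a + f = 1$ together with $be + cd = 1$; a direct enumeration produces $2 \cdot 6 = 12$ admissible Gram matrices, hence $12 \cdot 720 = 8640$ valid first bases. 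Explicit computation of two $3 \times 3$ minors yields $(M^{-1})_{21} + (M^{-1})_{43} = (a + f)(be + cd) = 1$, which is exactly the second surface relation evaluated on the dual basis, so this relation is automatic and no further restriction appears. Multiplying by $2^8 = 256$ gives $8640 \cdot 256 = 2211840$ structures on each of the two extra-special groups.

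Strongness is immediate, since the first four entries of $\bar{\mathfrak{S}}$ span $V$ while $\z$ generates $Z(G)$, giving $K_1 = G$, and symmetrically $K_2 = G$. The signature is $\sigma(\mathfrak{S}) = \tfrac{1}{3} \cdot 32 \cdot 2 \cdot (1 - \tfrac{1}{4}) = 16$ by direct substitution in \eqref{eq:signature-ddks}. Finally, $\mathrm{Aut}(G)$ acts freely on the set of structures, because any automorphism fixing the nine generators of a structure is trivial on $G$; dividing $2211840$ by $|\mathrm{Aut}(G)|$ as given in \eqref{eq:Order-auto-32} yields $1920$ orbits for $\mathsf{H}_5(\mathbb{Z}_2)$ and $1152$ for $\mathsf{G}_5(\mathbb{Z}_2)$. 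I expect the main technical step to be the Pfaffian-based identity showing that the second surface relation is automatic given the first one plus the cross-pairing constraints and the basis condition; everything else is routine enumeration or direct substitution.
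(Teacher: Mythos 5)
Your proof is correct, and its outer skeleton (the ``only if'' direction via Propositions \ref{prop:less-32-no-ddks} and \ref{prop:32-no-extra-special-no-structure}, the reduction to counting generator tuples of $V=G/Z(G)$ via Lemma \ref{lemma:number-DDKS}, the verification of strongness and of $\sigma(\mathfrak{S})=16$, and the orbit count via the free action of $\operatorname{Aut}(G)$) coincides with the paper's. Where you genuinely diverge is in the central enumeration of the tuples $\bar{\mathfrak{S}}$ satisfying \eqref{eq:symplectic-relations}. The paper splits into four cases according to the values of the self-pairings $(\br_{1j},\bt_{1j})$, $(\br_{2j},\bt_{2j})$; in the two surviving cases it extracts a symplectic basis from four of the eight vectors, shows the remaining four span a maximal isotropic plane (a Witt-index argument kills the other two cases), and parametrizes them by $\mathsf{GL}(2,\mathbb{Z}_2)$, obtaining $720\cdot 6$ tuples twice. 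You instead note that the cross-pairing matrix between the two halves of $\bar{\mathfrak{S}}$ is a nonsingular permutation matrix, so both halves are forced to be bases and the second half is the $(12)(34)$-reindexed dual basis of the first; the count then reduces to admissible Gram matrices $M$ of the first half, where $a+f=1$ and $\operatorname{Pf}(M)=af+be+cd=1$ give $12$ matrices, each a free $\mathsf{Sp}(4,\mathbb{Z}_2)$-orbit of $720$ bases, and the cofactor identity $(M^{-1})_{12}+(M^{-1})_{34}=a+f$ makes the second surface relation automatic. I verified the cofactor computation; it is right, and it also subsumes the paper's exclusion of cases $(c)$ and $(d)$, since $(\br_{21},\bt_{21})=f$ and $(\br_{22},\bt_{22})=a$ are forced. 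Your route is more uniform and gives strongness for free from both halves being bases, whereas the paper's case analysis keeps the symplectic/isotropic geometry explicit (which it then reuses in Example \ref{example:explicit DDKS}). The only points you leave implicit --- that $n=2$ because $\z$ must generate $Z(G)$ (Corollary \ref{cor:z-in-p-group}), and that a basis with any prescribed nondegenerate alternating Gram matrix actually exists, so each of the $12$ matrices really contributes $720$ bases --- are standard and covered by the surrounding text.
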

\begin{proof}
  We already know that non-extra-special	groups of order
  $32$ admit no diagonal double Kodaira structures (Proposition
  \ref{prop:32-no-extra-special-no-structure}) and so, in the sequel, we can
  assume that $G$ is extra-special.

  Looking at the first two relations in \eqref{eq:symplectic-relations},
  we see that we must consider four cases:
  \begin{itemize}
    \item[$\boldsymbol{(a)}$]
      $\; (\br_{12}, \, \bt_{12}) =0, \quad (\br_{11}, \,
      \bt_{11})=1, \quad
      (\br_{21}, \, \bt_{21}) =0, \quad (\br_{22}, \,
      \bt_{22})=1,$
    \item[$\boldsymbol{(b)}$]
      $\; (\br_{12}, \, \bt_{12}) =1, \quad (\br_{11}, \,
      \bt_{11})=0, \quad
      (\br_{21}, \, \bt_{21}) =1, \quad (\br_{22}, \,
      \bt_{22})=0,$
    \item[$\boldsymbol{(c)}$]
      $\; (\br_{12}, \, \bt_{12}) =0, \quad (\br_{11}, \,
      \bt_{11})=1, \quad
      (\br_{21}, \, \bt_{21}) =1, \quad (\br_{22}, \,
      \bt_{22})=0,$
    \item[$\boldsymbol{(d)}$]
      $\; (\br_{12}, \, \bt_{12}) =1, \quad (\br_{11}, \,
      \bt_{11})=0, \quad
      (\br_{21}, \, \bt_{21}) =0, \quad (\br_{22}, \,
      \bt_{22})=1.$
  \end{itemize}

  \medskip
  $\textbf{Case}$ $\boldsymbol{(a)}$. In this case the vectors
  $\br_{11},
  \, \bt_{11}, \, \br_{22}, \, \bt_{22}$ are a symplectic basis of $V$,
  whereas the subspace $W=\langle \br_{12}, \, \bt_{12}, \, \br_{21}, \,
  \bt_{21} \rangle$ is isotropic, namely the symplectic
  form is identically zero on it.  Since $V$ is a symplectic vector space
  of dimension
  $4$, the Witt index of $V$, i.e. the dimension of a maximal isotropic
  subspace of $V$,
  is $\frac{1}{2} \dim(V)=2$, see \cite[Th\'{e}or\`{e}mes 3.10, 3.11]{Ar62}.
  On the other hand, we have $(\br_{12}, \, \bt_{22})=1$ and
  $(\bt_{12}, \, \, \bt_{22})=0$, hence $\br_{12}, \, \bt_{12}$ are linearly
  independent and so they must generate a maximal isotropic subspace;
  it follows that $W=\langle \br_{12}, \, \bt_{12} \rangle$. Let us set now
  \begin{equation}
    \begin{split}
      (\br_{11}, \, \br_{12})&=a, \quad (\br_{11}, \,
      \bt_{12})=b, \quad (\br_{12},
      \, \bt_{11})=c, \quad (\bt_{11}, \, \bt_{12})=d, \\
      (\br_{21}, \, \br_{22})&=e, \quad (\br_{21}, \,
      \bt_{22})=f, \quad (\br_{22},
      \, \bt_{21})=g, \quad (\bt_{21}, \, \bt_{22})=h, \\
    \end{split}
  \end{equation}
  where $a, \, b, \, c, \, d, \, e, \, f, \, g, \, h \in \mathbb{Z}_2$,
  and
  let us express the remaining vectors of $\bar{\mathfrak{S}}$ in
  terms of
  the symplectic basis. Standard computations yield
  \begin{align} \label{eq:expression-symplectic-1}
    \br_{12} &=c\br_{11}+a\bt_{11}+\br_{22}, &\bt_{12}= d \br_{11}
    + b \bt_{11}+
    \bt_{22},\\
    \br_{21} &=\br_{11} + f \br_{22}+e \bt_{22},  &\bt_{21}=
    \bt_{11}+ h \br_{22}+g
    \bt_{22}.
  \end{align}
  Now recall that $W$ is isotropic; then, using the expressions in
  \eqref{eq:expression-symplectic-1} and imposing the relations
  \begin{equation}
    \begin{array}{lll}
      ( \br_{12}, \, \bt_{12})=0,  & \quad ( \br_{12}, \,
      \br_{21})=0, & \quad (
      \br_{12}, \, \bt_{21})=0, \\
      (\br_{21}, \, \bt_{12})=0, & \quad ( \bt_{12}, \,
      \bt_{21})=0, & \quad (
      \br_{21}, \, \bt_{21})=0,
    \end{array}
  \end{equation}
  we get
  \begin{equation}
    \begin{array}{lll}
      ad+bc=1, & \quad a+e=0, & \quad \quad c+g=0, \\
      \quad b+f=0, & \quad d+h=0, & \quad eh+fg=1.
    \end{array}
  \end{equation}
  Summing up, the elements $\br_{12}$, $\bt_{12}$, $\br_{21}$,
  $\bt_{21}$
  can be determined from the symplectic basis via the relations
  \begin{align} \label{eq:expression-symplectic-2}
    \br_{12} &=c\br_{11}+a\bt_{11}+\br_{22}, &\bt_{12}= d \br_{11}
    + b \bt_{11}+
    \bt_{22},\\
    \br_{21} &=\br_{11} + b \br_{22}+a \bt_{22},  &\bt_{21}=
    \bt_{11}+ d \br_{22}+c
    \bt_{22},
  \end{align}
  where $a, \, b, \, c, \, d \in \mathbb{Z}_2$ and
  $ad+bc=1$. Conversely,
  given any symplectic basis $\br_{11}, \, \bt_{11}, \, \br_{22},
  \, \bt_{22}$
  of $V$ and elements $\br_{12}$, $\bt_{12}$, $\br_{21}$, $\bt_{21}$
  as in
  \eqref{eq:expression-symplectic-2}, with $ad+bc=1$, we get a set of
  generators $\bar{\mathfrak{S}}$ of $V$ having the form \eqref{eq:bar-Sigma},
  and whose elements satisfy
  \eqref{eq:symplectic-relations}. Thus, the
  total number
  of such $\bar{\mathfrak{S}}$ in this case
  is given by
  \begin{equation}
    |\mathsf{Sp}(4, \, \mathbb{Z}_2)| \cdot |\mathsf{GL}(2,
    \, \mathbb{Z}_2)|=
    720 \cdot 6 = 4320
  \end{equation}
  and so, by Lemma \ref{lemma:number-DDKS}, the corresponding number
  of diagonal
  double Kodaira structures is $2^8 \cdot 4320 = 1105920$. All
  these structures
  are strong: in fact, we have
  \begin{equation}
    \begin{split}
      K_1 = \langle \r_{11}, \, \t_{11}, \, \r_{12}, \,
      \t_{12} \rangle & = \langle
      \r_{11}, \, \t_{11}, \, \r_{11}^c \t_{11}^a \r_{22},
      \, \r_{11}^d \t_{11}^b
      \t_{22} \rangle \\
      & = \langle \r_{11}, \, \t_{11}, \, \r_{22}, \,
      \t_{22} \rangle =G
    \end{split}
  \end{equation}
  \begin{equation}
    \begin{split}
      K_2 = \langle \r_{21}, \, \t_{21}, \, \r_{22}, \,
      \t_{22} \rangle & = \langle
      \r_{11}\r_{22}^b\t_{22}^a, \,
      \t_{11}\r_{22}^d\t_{22}^c, \, \r_{22}, \,
      \t_{22}  \rangle \\
      & = \langle \r_{11}, \, \t_{11}, \, \r_{22}, \,
      \t_{22} \rangle =G,
    \end{split}
  \end{equation}
  the last equality following in both cases because $\langle
  \br_{11}, \,
  \bt_{11}, \, \br_{22}, \, \bt_{22} \rangle =V$ and $[\r_{11},
  \, \t_{11}]=\z$.

  \medskip
  $\textbf{Case}$ $\boldsymbol{(b)}$. In this situation, the elements
  $\{\br_{12}, \, \bt_{12}, \, \br_{21}, \, \bt_{21} \}$ form a symplectic
  basis for $V$, whereas $W=\langle \br_{11}, \, \bt_{11}, \, \br_{22}, \,
  \bt_{22} \rangle$ is an isotropic subspace. The same calculations as in
  case $(a)$ show that there are again $1105920$ diagonal double Kodaira
  structures.

  \medskip
  $\textbf{Case}$ $\boldsymbol{(c)}$. This case do not
  occur. In fact, in
  this situation
  the subspace $W= \langle \br_{12}, \, \bt_{12}, \, \br_{21}
  \rangle$ is
  isotropic. Take a linear combination of its generators giving the zero
  vector, namely
  \begin{equation} \label{eq:linear-combination}
    a\br_{12}+b\bt_{12}+c \br_{21}=0.
  \end{equation}
  Pairing
  with $\bt_{21}$,
  $\bt_{22}$, $\br_{22}$, we get $c=a=b=0$. Thus,
  $\br_{12}, \, \bt_{12}, \, \br_{21}$
  are linearly independent, and $W$ is an isotropic subspace
  of dimension
  $3$ inside the $4$-dimensional symplectic space $V$, contradiction.

  \medskip
  $\textbf{Case}$ $\boldsymbol{(d)}$. This case is obtained from $(c)$
  by exchanging the indices $1$ and $2$, so it does not occur, either.

  \medskip

  Summarizing, we have found $1105920$ diagonal double Kodaira
  structures in
  cases $(a)$ and $(b)$, and no structure at all in cases $(c)$ and
  $(d)$. So
  the total number of diagonal double Kodaira structures on $G$
  is  $2211840$,
  and this concludes the proof of part $\boldsymbol{(1)}$.
  \medskip

  Now observe that, since every diagonal double Kodaira structure
  $\mathfrak{S}$
  generates $G$, the only automorphism $\phi$ of $G$ fixing $\mathfrak{S}$
  elementwise is the identity. This means that $\mathrm{Aut}(G)$
  acts
  freely on the set of diagonal double Kodaira structures, hence
  the number
  of orbits is obtained dividing $2211840$ by
  $|\mathrm{Aut}(G)|$. Parts $\boldsymbol{(2)}$ and $\boldsymbol{(3)}$
  now
  follow from \eqref{eq:Order-auto-32}, and we are done.
\end{proof}

\begin{example} \label{example:explicit DDKS}
  Let us give an explicit example of diagonal double Kodaira
  structure on
  an extra-special group $G$ of order $32$, by using the construction
  described in the proof of part $\boldsymbol{(1)}$ of Theorem
  \ref{thm:main-algebraic}. Referring to the presentations
  for $\mathsf{H}_5(\mathbb{Z}_2)$ and $\mathsf{G}_5(\mathbb{Z}_2)$
  given in
  Proposition \ref{prop:extra-special-groups}, we start by choosing
  in both
  cases the following elements, whose images give a symplectic basis
  for $V$:
  \begin{equation}
    \r_{11}=\r_1, \quad \t_{11}=\t_1, \quad \r_{22}=\r_2,
    \quad \t_{22}=\t_2.
  \end{equation}
  Choosing $a=d=1$ and $b=c=0$ in \eqref{eq:expression-symplectic-2},
  we find
  the remaining elements, obtaining  the diagonal double Kodaira
  structure
  \begin{equation}
    \begin{aligned}
      \r_{11}&=\r_1, & \quad \t_{11}& =\t_1, & \quad
      \r_{12}& =\r_2\, \t_1,
      & \quad \t_{12}&=\r_1 \, \t_2 \\ \r_{21}& =\r_1\,\t_2,
      & \quad \t_{21} & =\r_2\,
      \t_1,
      & \quad \r_{22}& =\r_2, &\quad \t_{22}&=\t_2.
    \end{aligned}
  \end{equation}
\end{example}

\begin{remark} \label{rmk:comparison-minimal-order}
  Theorem \ref{thm:main-algebraic} should be compared with  previous
  results of
  \cite{CaPol19} and \cite{Pol20}, regarding the construction of
  diagonal
  double Kodaira structures
  on some extra-special groups of order at least $2^7=128$. We emphasize that the examples on extra-special groups of order $32$ presented here
  are really
  new, in the sense that they cannot be obtained by taking the image of
  structures
  on extra-special groups of bigger order: in fact, an extra-special
  group
  admits no non-abelian proper quotients, cf. Example
  \ref{ex:extra-special-is-monolithic}.
\end{remark}

Let us end this section with the restatement of Theorem \ref{thm:main-algebraic}
in terms of admissible epimorphisms from surface braid groups to finite groups.

\begin{corollary} \label{cor:quotient-of-P2}
Let $G$ be a finite group admitting an admissible epimorphism $\varphi
  \colon \mathsf{P}_2(\Sigma_b) \to G$. Then
  $|G| \geq 32$, with equality if and only if $G$ is
  extra-special. Moreover,
  the following holds.
  \begin{itemize}
    \item[$\boldsymbol{(1)}$] For both extra-special groups
      $G$ of order $32$, there are $2211840=1152 \cdot 1920$
      admissible epimorphisms $\varphi \colon \mathsf{P}_2(\Sigma_2)
      \to G$. For all of them, $\varphi(A_{12})$ is the
      generator of $Z(G)$, so $n=2$.
    \item[$\boldsymbol{(2)}$] If $G=G(32, \,
      49)=\mathsf{H}_5(\mathbb{Z}_2)$,
      these epimorhisms
      form $1920$ orbits under the natural action of
      $\operatorname{Aut}(G)$.
    \item[$\boldsymbol{(3)}$] If $G=G(32, \, 50)=
      \mathsf{G}_5(\mathbb{Z}_2)$,
      these epimorhisms
      form $1152$ orbits under the natural action of
      $\operatorname{Aut}(G)$.
  \end{itemize}
\end{corollary}

%\begin{remark} \label{rmk:quotient-of-P2}
 % Although it is known that the pure braid group $\mathsf{P}_2(\Sigma_b)$
  %is residually $p$-finite
  %for all prime number $p \geq 2$, see \cite[pp. 1481-1490]{BarBel09},
  %it is a non-trivial problem to understand its finite non-abelian
  %quotients. The extra-special examples of order at least $128$ cited in
  %Remark \ref{rmk:comparison-minimal-order} were the outcome of the first
  %(as far as we know) systematic investigation of this matter.
  %Our approach in the present work sheds some new light on this problem,
  %providing a sharp lower bound for the order of a non-abelian quotient $G$ %of
 % $\mathsf{P}_2(\Sigma_b)$, under the assumption that the quotient
  %map $\varphi \colon \mathsf{P}_2(\Sigma_b) \to G$ does not factor
  %through $\pi_1(\Sigma_b \times \Sigma_b, \, \mathscr{P})$; indeed, by
  %\cite[Remark 1.7 (iv)]{CaPol19}, the factorization occurs if and only if
  %$\varphi(A_{12})$  is trivial. More precisely, we showed that, for every
  %such a quotient, the inequality $|G| \geq 32$ holds, with equality
  %if and only if $G$ is extra-special: in fact, both extra-special groups  %of
 % order $32$ do appear as quotients of $\mathsf{P}_2(\Sigma_2)$. Moreover,
  %for these groups, Theorem \ref{thm:main-algebraic} also computes the
  %number of distinct group epimorphisms $\varphi \colon \mathsf{P}%_2(\Sigma_2)
 % \to G$ such that $\varphi(A_{12})=\z$, and the number of their equivalence
 % classes up to the natural action
  %of $\operatorname{Aut}(G)$.
%\end{remark}

\section{Geometrical application: diagonal double Kodaira fibrations}
\label{sec:ddkf}

%Recall that a \emph{Kodaira fibration} is a smooth, connected holomorphic
%fibration $f_1 \colon S \to B_1$, where $S$ is a compact complex surface
%and $B_1$ is a compact complex curve, which is not isotrivial. The genus
%$b_1:=g(B_1)$ is called the \emph{base genus} of the fibration, whereas
%the genus $g:=g(F)$, where $F$ is any fibre, is called the \emph{fibre genus}.

%\begin{definition} \label{def:double-kodaira}
 % A \emph{double Kodaira surface} is a compact complex surface $S$,
  %endowed
  %with a \emph{double Kodaira fibration}, namely a surjective,
  %holomorphic map
  %$f \colon S \to B_1 \times B_2$ yielding, by composition with
  %the natural
  %projections, two Kodaira fibrations $f_i \colon S \to B_i$, $i=1,
 %\,2$.
%\end{definition}

The aim of this section is to show how the existence of  diagonal double
Kodaira structures is equivalent to the existence of some special
double Kodaira fibrations (see the Introduction for the definition), that we call \emph{diagonal double Kodaira fibrations}. We closely follow the treatment given in \cite[Section 4]{Pol20}.

With a slight abuse of notation, in the sequel we will use the symbol
$\Sigma_b$ to indicate both a smooth complex curve of genus $b$ and its
underlying real surface. By Grauert-Remmert's extension theorem and Serre's
GAGA, the group epimorphism $\varphi \colon \mathsf{P}_2(\Sigma_b) \to G$
described in Proposition \ref{prop:varphi} yields the existence of a smooth,
complex, projective surface $S$ endowed with a Galois cover
\begin{equation}
  \mathbf{f} \colon S \to \Sigma_b \times \Sigma_b,
\end{equation}
with Galois group $G$ and branched precisely over $\Delta$ with branching
order $n$, see \cite[Proposition 3.4]{CaPol19}. Composing the left
homomorphisms
in \eqref{eq:oggi} with $\varphi \colon \mathsf{P}_2(\Sigma_b) \to G$,
we get two homomorphisms
\begin{equation} \label{eq:varphi-i}
  \varphi_1 \colon \pi_1(\Sigma_b -\{p_2\}, \, p_1) \to G, \quad
  \varphi_2
  \colon \pi_1(\Sigma_b -\{p_1\}, \, p_2) \to G,
\end{equation}
whose respective images coincide with the subgroups $K_1$ and $K_2$ defined
in \eqref{eq:K1K2}. By construction, these are
the homomorphisms induced by the restrictions $\mathbf{f}_i \colon \Gamma_i
\to \Sigma_b$  of the Galois cover $\mathbf{f} \colon S \to \Sigma_b
\times \Sigma_b$ to the fibres of the two natural projections $\pi_i
\colon \Sigma_b \times \Sigma_b \to \Sigma_b$. Since $\Delta$ intersects
transversally at a single point all the fibres of the natural projections,
it follows that both such restrictions are branched at precisely one point,
and the number of connected components of the smooth curve $\Gamma_i \subset
S$ equals the index $m_i:=[G : K_i]$ of $K_i$ in $G$.

So, taking the Stein factorizations of the compositions $\pi_i \circ
\mathbf{f} \colon S \to \Sigma_b$ as in the diagram below
\begin{equation} \label{dia:Stein-Kodaira-gi}
  \begin{tikzcd}
    S \ar{rr}{\pi_i \circ \mathbf{f}}  \ar{dr}{f_i} & &
    \Sigma_b	\\
    & \Sigma_{b_i} \ar{ur}{\theta_i} &
  \end{tikzcd}
\end{equation}
we obtain two distinct Kodaira fibrations $f_i \colon S \to \Sigma_{b_i}$,
hence a double Kodaira fibration by considering the product morphism
\begin{equation}
  f=f_1 \times f_2 \colon S \to \Sigma_{b_1} \times \Sigma_{b_2}.
\end{equation}
\begin{definition} \label{def:diagonal-double-kodaira-fibration}
  We call $f \colon S \to \Sigma_{b_1} \times \Sigma_{b_2}$ the
  \emph{diagonal
  double Kodaira fibration} associated with the diagonal double Kodaira
  structure $\S$ on the finite group $G$. Conversely, we will say that a
  double Kodaira fibration $f \colon S \to \Sigma_{b_1} \times
  \Sigma_{b_2}$
  is \emph{of diagonal type} $(b, \, n)$ if there exists a finite
  group $G$
  and a diagonal double Kodaira structure $\S$ of type $(b, \, n)$
  on it such
  that $f$ is associated with $\S$.
\end{definition}

\medskip

Since the morphism $\theta_i \colon \Sigma_{b_i} \to \Sigma_b$ is \'{e}tale
of degree $m_i$, by using the Hurwitz formula we obtain
\begin{equation} \label{eq:expression-gi}
  b_1 -1 =m_1(b-1), \quad  b_2 -1 =m_2(b-1).
\end{equation}
Moreover, the fibre genera $g_1$, $g_2$ of the Kodaira fibrations $f_1
\colon S \to \Sigma_{b_1}$, $f_2 \colon S \to \Sigma_{b_2}$ are computed
by the formulae
\begin{equation} \label{eq:expression-gFi}
  2g_1-2 = \frac{|G|}{m_1} (2b-2 + \mathfrak{n} ), \quad 2g_2-2 =
  \frac{|G|}{m_2} \left( 2b-2 + \mathfrak{n} \right),
\end{equation}
where $\mathfrak{n}:= 1 - 1/n$. Finally, the surface $S$ fits into a diagram
\begin{equation} \label{dia:degree-f-general}
  \begin{tikzcd}
    S \ar{rr}{\mathbf{f}}  \ar{dr}{f} & & \Sigma_b \times
    \Sigma_b  \\
    & \Sigma_{b_1} \times \Sigma_{b_2} \ar[ur, "\theta_1 \times
    \theta_2"{sloped, anchor=south}] &
  \end{tikzcd}
\end{equation}
so that the diagonal double Kodaira fibration $f \colon S \to  \Sigma_{b_1}
\times \Sigma_{b_2}$ is a finite cover of degree $\frac{|G|}{m_1m_2}$,
branched precisely over the curve
\begin{equation} \label{eq:branching-f}
  (\theta_1 \times \theta_2)^{-1}(\Delta)=\Sigma_{b_1} \times_{\Sigma_b}
  \Sigma_{b_2}.
\end{equation}
Such a curve is always smooth, being the preimage of a smooth divisor via an
\'{e}tale morphism. However, it is reducible in general, see \cite[Proposition
3.11]{CaPol19}. The invariants of $S$ can be now computed as follows,
see \cite[Proposition 3.8]{CaPol19}.

\begin{proposition} \label{prop:invariant-S-G}
  Let $f \colon S \to \Sigma_{b_1} \times \Sigma_{b_2}$ be a diagonal
  double
  Kodaira fibration, associated with a diagonal double Kodaira
  structure $\S$
  of type $(b, \, n)$ on a finite group $G$. Then we have
  \begin{equation} \label{eq:invariants-S-G}
    \begin{split}
      c_1^2(S) & = |G|\,(2b-2) ( 4b-4 + 4 \mathfrak{n}
      - \mathfrak{n}^2 ) \\
      c_2(S) & =   |G|\,(2b-2) (2b-2 + \mathfrak{n})
    \end{split}
  \end{equation}
  where $\mathfrak{n}=1-1/n$.	As a consequence, the slope and the signature
  of $S$ can be
  expressed as
  \begin{equation} \label{eq:slope-signature-S-G}
    \begin{split}
      \nu(S) & = \frac{c_1^2(S)}{c_2(S)} = 2+ \frac{2
	\mathfrak{n} -
      \mathfrak{n}^2}{2b-2 + \mathfrak{n} } \\
      \sigma(S)& = \frac{1}{3}\left(c_1^2(S) - 2 c_2(S)
      \right)
      =\frac{1}{3}\,|G|\,(2b-2)\left(1-\frac{1}{n^2}\right)
      =\sigma(\mathfrak{S})
    \end{split}
  \end{equation}
\end{proposition}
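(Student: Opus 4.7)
The plan is to compute the two Chern numbers $c_1^2(S)$ and $c_2(S)$ directly from the structure of $\mathbf{f}\colon S\to X:=\Sigma_b\times\Sigma_b$ as a Galois $G$-cover branched over the smooth divisor $\Delta\subset X$ with ramification index $n$, and then to derive the slope and the signature by the Hirzebruch identity $3\sigma=c_1^2-2c_2$. Everything else follows algebraically.

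First I would handle $c_2(S)=e(S)$ topologically. Since $\mathbf{f}$ is unramified outside $\Delta$ and totally ramified of order $n$ above $\Delta$, the preimage $\mathbf{f}^{-1}(\Delta)$ is a smooth curve which covers $\Delta$ with degree $|G|/n$, so
\begin{equation}
e(S)=|G|\,e(X\smallsetminus\Delta)+\tfrac{|G|}{n}\,e(\Delta)=|G|\bigl(e(X)-\mathfrak{n}\,e(\Delta)\bigr),
\end{equation}
with $\mathfrak{n}=1-1/n$. Plugging in $e(X)=(2-2b)^2=(2b-2)^2$ and $e(\Delta)=2-2b=-(2b-2)$ gives exactly the second formula in \eqref{eq:invariants-S-G}.

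Next I would compute $c_1^2(S)=K_S^2$ using the standard ramification formula for Galois covers branched over a smooth divisor, namely $K_S=\mathbf{f}^{*}(K_X+\mathfrak{n}\,\Delta)$. Then $K_S^2=|G|\,(K_X+\mathfrak{n}\,\Delta)^2$, and we need only the three intersection numbers on $X=\Sigma_b\times\Sigma_b$: writing $K_X=\pi_1^*K_{\Sigma_b}+\pi_2^*K_{\Sigma_b}$ we get $K_X^2=2(2b-2)^2$ and $K_X\cdot\Delta=2(2b-2)$; finally $\Delta^2=\deg N_{\Delta/X}=\deg T_{\Sigma_b}=-(2b-2)$ (consistent with adjunction, since $(K_X+\Delta)\cdot\Delta=2b-2=\deg K_\Delta$). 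Expanding $(K_X+\mathfrak{n}\,\Delta)^2$ and factoring out $(2b-2)$ yields
\begin{equation}
c_1^2(S)=|G|(2b-2)\bigl(4b-4+4\mathfrak{n}-\mathfrak{n}^2\bigr),
\end{equation}
which is the first formula in \eqref{eq:invariants-S-G}.

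The two formulas in \eqref{eq:slope-signature-S-G} are then purely algebraic consequences. For the slope, factor $(2b-2)$ from numerator and denominator of $c_1^2/c_2$, and split the numerator as $2\bigl((2b-2)+\mathfrak{n}\bigr)+(2\mathfrak{n}-\mathfrak{n}^2)$. For the signature, the computation $c_1^2-2c_2=|G|(2b-2)\bigl(2\mathfrak{n}-\mathfrak{n}^2\bigr)=|G|(2b-2)\,\mathfrak{n}(2-\mathfrak{n})$ together with the identity $\mathfrak{n}(2-\mathfrak{n})=(1-1/n)(1+1/n)=1-1/n^{2}$ gives $\sigma(S)=\tfrac{1}{3}|G|(2b-2)(1-1/n^2)$, which is exactly $\sigma(\mathfrak{S})$ from Definition~\ref{def:signature-ddks}. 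There is no serious obstacle here: the only points requiring a little care are verifying that the Galois-cover formulas for $K_S$ and $e(S)$ apply (which is standard, since the branch locus is smooth and the ramification is totally of index $n$ everywhere along $\Delta$) and keeping track of the sign of $\Delta^2$ via the identification $N_{\Delta/X}\cong T_{\Sigma_b}$.
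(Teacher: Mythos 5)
Your proof is correct, and it is essentially the argument the paper relies on: the paper does not reprove the formulas but cites \cite[Proposition 3.8]{CaPol19}, where the invariants are obtained exactly as you do, via $e(S)=|G|\bigl(e(X)-\mathfrak{n}\,e(\Delta)\bigr)$ for the branched cover and $K_S=\mathbf{f}^*(K_X+\mathfrak{n}\Delta)$ together with the intersection numbers $K_X^2=2(2b-2)^2$, $K_X\cdot\Delta=2(2b-2)$, $\Delta^2=-(2b-2)$. All of your intermediate computations (including the identity $2\mathfrak{n}-\mathfrak{n}^2=1-1/n^2$ and the use of $\sigma=\tfrac{1}{3}(c_1^2-2c_2)$) check out.
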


\begin{remark} \label{rmk:sdks-characterization}
  By definition, the diagonal double Kodaira structure $\S$ is strong
  if and only if $m_1=m_2=1$, that in turn implies $b_1=b_2=b$,
  i.e.,
  $f=\mathbf{f}$. In other words, $\S$ is strong if and only
  if no Stein
  factorization as in \eqref{dia:Stein-Kodaira-gi} is needed or,
  equivalently,
  if and only if the Galois cover $\mathbf{f}\colon S \to \Sigma_b
  \times
  \Sigma_b$ induced by \eqref{eq:varphi} is already a double Kodaira
  fibration,
  branched on the diagonal $\Delta \subset \Sigma_b \times \Sigma_b$.
\end{remark}

\begin{remark} \label{rmk:rokhlin}
  Every Kodaira fibred surface $S$ satisfies $\sigma(S) >0$, see the
  introduction to \cite{LLR17}; moreover, since $S$ is a differentiable
  $4$-manifold that is a real surface bundle, its signature is
  divisible by
  $4$, see \cite{Mey73}. In addition, if $S$ is associated with
  a diagonal
  double Kodaira structure of type $(b, \, n)$, with $n$ odd, then $K_S$
  is $2$-divisible in $\textrm{Pic}(S)$ and so $\sigma(S)$ is a positive
  multiple of $16$ by Rokhlin's theorem, see \cite[Remark 3.9]{CaPol19}.
\end{remark}

We are now ready to give a geometric restatement of the algebraic results of Section \ref{sec:DDKS} in terms of double Kodaira fibrations.

\begin{theorem} \label{thm:main-geometric}
  Let $G$ be a finite group and
  \begin{equation} \label{eq:G-cover-2}
    \mathbf{f} \colon S \to \Sigma_b \times \Sigma_b
  \end{equation}
  be a Galois cover with Galois group $G$,  branched over the diagonal
  $\Delta$ with branching order $n \geq 2$. Then the following hold.
  \begin{itemize}
    \item[$\boldsymbol{(1)}$] We have $|G| \geq 32$, with
      equality precisely
      when $G$	is extra-special.
    \item[$\boldsymbol{(2)}$] If $G=G(32, \,
      49)=\mathsf{H}_5(\mathbb{Z}_2)$ and $b=2$,
      there are $1920$ $G$-covers of type
      \textrm{\eqref{eq:G-cover-2}}, up to
      cover isomorphisms.
    \item[$\boldsymbol{(3)}$] If $G=G(32, \,
      50)=\mathsf{G}_5(\mathbb{Z}_2)$ and $b=2$,
      there are $1152$ $G$-covers of type
      \textrm{\eqref{eq:G-cover-2}}, up to
      cover isomorphisms.
  \end{itemize}
  Finally, in both cases $\boldsymbol{(2)}$ and $\boldsymbol{(3)}$, we have
  $n=2$ and each
  cover $\mathbf{f}$ is a double Kodaira fibration with
  \begin{equation}
    b_1=b_2=2, \quad g_1=g_2=41,  \quad
    \sigma(S)=16.
  \end{equation}
\end{theorem}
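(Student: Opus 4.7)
The plan is to reduce the geometric statement to the algebraic classification of diagonal double Kodaira structures obtained earlier in the paper. By Grauert--Remmert's extension theorem and Serre's GAGA, giving a Galois cover $\mathbf{f}\colon S\to\Sigma_b\times\Sigma_b$ with group $G$, branched of order $n\geq 2$ over the diagonal $\Delta$, is the same datum (up to cover isomorphisms) as giving a surjection $\varphi\colon\pi_1(\Sigma_b\times\Sigma_b-\Delta)\to G$ with $\varphi(\gamma_\Delta)$ of order $n$, modulo the natural action of $\operatorname{Aut}(G)$. Using the presentation of $\mathsf{P}_2(\Sigma_b)$ recalled in Section \ref{sec:ddks} and Proposition \ref{prop:varphi}, such epimorphisms correspond bijectively to diagonal double Kodaira structures on $G$ of type $(b,n)$.

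For part $\boldsymbol{(1)}$, the existence of a $G$-cover as above therefore forces $G$ to admit a diagonal double Kodaira structure. Combining Proposition \ref{prop:less-32-no-ddks}, Proposition \ref{prop:32-no-extra-special-no-structure} and Theorem \ref{thm:main-algebraic} immediately gives $|G|\geq 32$, with equality if and only if $G$ is extra-special.

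For parts $\boldsymbol{(2)}$ and $\boldsymbol{(3)}$, I observe that two epimorphisms $\varphi_1,\varphi_2\colon\mathsf{P}_2(\Sigma_b)\to G$ yield cover-isomorphic $G$-covers if and only if they differ by an element of $\operatorname{Aut}(G)$. Hence equivalence classes of $G$-covers are in bijection with $\operatorname{Aut}(G)$-orbits of diagonal double Kodaira structures of type $(b,n)=(2,2)$ on $G$ (recall that $n=2$ is forced by Corollary \ref{cor:z-in-p-group}, since $\z$ must generate $Z(G)\simeq\mathbb{Z}_2$). The counts $1920$ and $1152$ then come directly from parts $\boldsymbol{(2)}$ and $\boldsymbol{(3)}$ of Theorem \ref{thm:main-algebraic}.

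Finally, to prove that each such $\mathbf{f}$ is already a double Kodaira fibration with the stated invariants, I appeal to the fact, recorded in Theorem \ref{thm:main-algebraic} $\boldsymbol{(1)}$, that every diagonal double Kodaira structure of type $(2,2)$ on an extra-special group of order $32$ is \emph{strong}. By Remark \ref{rmk:sdks-characterization} this means $m_1=m_2=1$, so no Stein factorization is required and $\mathbf{f}=f$; formula \eqref{eq:expression-gi} then gives $b_1=b_2=b=2$. Plugging $(|G|,b,n)=(32,2,2)$ and $\mathfrak{n}=1/2$ into \eqref{eq:expression-gFi} and \eqref{eq:slope-signature-S-G} from Proposition \ref{prop:invariant-S-G} yields
\begin{equation}
2g_i-2=\frac{32}{1}\bigl(2+\tfrac{1}{2}\bigr)=80,\qquad \sigma(S)=\tfrac{1}{3}\cdot 32\cdot 2\cdot\bigl(1-\tfrac{1}{4}\bigr)=16,
\end{equation}
so $g_1=g_2=41$ and $\sigma(S)=16$, as claimed. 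The only mildly delicate point is the bookkeeping in the last step of $\boldsymbol{(2)}$--$\boldsymbol{(3)}$: one must check that the $\operatorname{Aut}(G)$-action on structures is free (which follows because a structure generates $G$ and $\operatorname{Aut}(G)$ acts faithfully on any generating set), so that the orbit count really equals $2211840/|\operatorname{Aut}(G)|$; this is exactly what was already verified inside the proof of Theorem \ref{thm:main-algebraic}, and is the main algebraic input the geometric theorem rests on.
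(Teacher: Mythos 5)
Your proposal is correct and follows essentially the same route as the paper: reduce via Grauert--Remmert/GAGA and Proposition \ref{prop:varphi} to diagonal double Kodaira structures, quote Theorem \ref{thm:main-algebraic} for the existence bound and the orbit counts, and use strongness plus Remark \ref{rmk:sdks-characterization} and the formulae of Proposition \ref{prop:invariant-S-G} for the invariants. The numerical checks ($2g_i-2=80$, $\sigma(S)=16$) are accurate, and the freeness of the $\operatorname{Aut}(G)$-action you flag is indeed the point already settled inside the proof of Theorem \ref{thm:main-algebraic}.
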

\begin{proof}
  A cover  as in
  \eqref{eq:G-cover-2},
  branched over $\Delta$ with order $n$, exists if and only if $G$
  admits a
  double Kodaira structure of type $(b, \, n)$; additionally, the number of
  such covers,
  up to cover isomorphisms, equals the number of structures up
  the natural
  action of $\mathrm{Aut}(G)$. Then,  $\boldsymbol{(1)}$,
  $\boldsymbol{(2)}$
  and $\boldsymbol{(3)}$ can be deduced from the corresponding
  statements in
  Theorem \ref{thm:main-algebraic}. The same theorem tells us that
  all double
  Kodaira structures on an extra-special group of order $32$ are
  strong, hence the cover $\mathbf{f}$ is already a double Kodaira
  fibration and no Stein factorization is needed (Remark
  \ref{rmk:sdks-characterization}).
  The fibre genera, the slope and the
  signature of $S$ can be now computed by using
  \eqref{eq:expression-gFi} and \eqref{eq:slope-signature-S-G}.
\end{proof}
As a consequence, we obtain a sharp lower  bound for the signature of a
diagonal double Kodaira fibration or, equivalently, of a diagonal double
Kodaira structure.
\begin{corollary} \label{cor:bound-signature}
  Let $f \colon S \to \Sigma_{b_1} \times \Sigma_{b_2}$ be a diagonal
  double
  Kodaira fibration, associated with a diagonal double Kodaira structure
  of type
  $(b, \, n)$ on a finite group $G$. Then $\sigma(S) \geq 16$,
  and equality
  holds precisely when $(b, \, n)=(2, \, 2)$ and $G$ is an extra-special
  group of order $32$.
\end{corollary}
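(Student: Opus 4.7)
The plan is straightforward given the previous results. The main ingredient is the signature formula from Proposition \ref{prop:invariant-S-G}, which expresses the geometric signature of $S$ in terms of purely algebraic data associated to the structure $\mathfrak{S}$:
\begin{equation}
\sigma(S) \;=\; \sigma(\mathfrak{S}) \;=\; \frac{1}{3}\,|G|\,(2b-2)\left(1-\frac{1}{n^{2}}\right).
\end{equation}
Thus the corollary reduces to a lower bound on the product of the three factors $|G|$, $(2b-2)$ and $(1-1/n^{2})$, together with a discussion of when all of them are simultaneously minimized.

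First I would bound each factor individually. By Theorem A (i.e.\ Proposition \ref{prop:less-32-no-ddks}, Proposition \ref{prop:32-no-extra-special-no-structure} and Theorem \ref{thm:main-algebraic}), the existence of a diagonal double Kodaira structure on $G$ forces $|G|\geq 32$, with equality if and only if $G$ is extra-special. The definition of a diagonal double Kodaira structure requires $b\geq 2$, so $2b-2\geq 2$; and $n\geq 2$ gives $1-1/n^{2}\geq 3/4$. Multiplying these three estimates,
\begin{equation}
\sigma(S) \;\geq\; \frac{1}{3}\cdot 32\cdot 2\cdot \frac{3}{4}\;=\;16,
\end{equation}
which proves the desired inequality.

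For the equality case, the estimate above is an equality precisely when $|G|=32$, $b=2$ and $n=2$ simultaneously. The first of these three conditions, by Theorem A, forces $G$ to be extra-special; conversely, for both extra-special groups of order $32$, part (1) of Theorem \ref{thm:main-algebraic} provides diagonal double Kodaira structures of type $(2,2)$ with $\sigma(\mathfrak{S})=16$, so the bound is sharp. Since all three inequalities are completely elementary, there is no substantive obstacle here: the whole corollary is essentially the arithmetic consequence of Theorem A combined with the signature formula \eqref{eq:slope-signature-S-G}, and the proof fits in a few lines.
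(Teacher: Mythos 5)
Your proposal is correct and follows essentially the same route as the paper: both apply the signature formula $\sigma(S)=\frac{1}{3}|G|(2b-2)(1-1/n^2)$ from Proposition \ref{prop:invariant-S-G}, bound the three factors using $|G|\geq 32$ (Theorem \ref{thm:main-algebraic}), $b\geq 2$, $n\geq 2$, and then characterize equality via the extra-special case. No gaps; the equality discussion matches the paper's.
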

\begin{proof}
  Theorem \ref{thm:main-algebraic} implies $|G| \geq 32$. Since $b
  \geq 2$
  and $n \geq 2$, from \eqref{eq:slope-signature-S-G} we get
  \begin{equation}
    \sigma(S) =\frac{1}{3}\,|G|\,(2b-2)\left(1-\frac{1}{\;
    n^2}\right) \\
    \geq \frac{1}{3} \cdot 32 \cdot (2\cdot 2 -2)
    \left(1-\frac{1}{\; 2^2}\right)
    = 16,
  \end{equation}
  and equality holds if and only if we are in the situation described in
  Theorem \ref{thm:main-geometric}, namely, $b=n=2$  and $G$ an
  extra-special
  group of order $32$.
\end{proof}

These results provide, in particular, new ``double solutions'' to a problem,
posed by G. Mess, from Kirby's problem list in low-dimensional topology
\cite[Problem 2.18 A]{Kir97}, asking what is the smallest number $b$ for
which there exists a real surface bundle over a real surface with base genus
$b$ and non-zero signature. We actually have $b=2$, also for double Kodaira
fibrations, as shown in \cite[Proposition 3.19]{CaPol19} and \cite{Pol20} by using double Kodaira structures of type $(2, \, 3)$ on
extra-special groups of order $3^5$. Those fibrations had signature $144$
and fibre genera $325$; by using our new examples, we can now
substantially lower both these values.

\begin{theorem} \label{thm:new-Kirby}
  Let $S$ be the diagonal double Kodaira surface associated
  with a strong diagonal double Kodaira structure of type $(b, \, n)=(2, \,
  2)$ on an extra-special group $G$ of order $32$. Then the real manifold $M$
  underlying
  $S$ is a closed, orientable $4$-manifold of signature $16$ that
  can be realized as a real surface bundle over a real surface of
  genus $2$,
  with fibre  genus $41$, in two different ways.
\end{theorem}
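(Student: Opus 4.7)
The plan is to read off Theorem D as a direct consequence of Theorem B (that is, Theorem~\ref{thm:main-geometric}) combined with Ehresmann's fibration theorem, so the proof is really just an unpacking of what has already been established.

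First, I would invoke Theorem~\ref{thm:main-geometric}: for an extra-special group $G$ of order $32$, a diagonal double Kodaira structure of type $(2,\,2)$ on $G$ yields a $G$-Galois cover $\mathbf{f}\colon S\to \Sigma_2\times \Sigma_2$ branched over the diagonal with branching order $n=2$, and the surface $S$ is automatically a (genuine) double Kodaira surface with $b_1=b_2=2$, $g_1=g_2=41$, and $\sigma(S)=16$. Here I would emphasize that the strength hypothesis on $\mathfrak{S}$ is what guarantees $m_1=m_2=1$, so that no Stein factorization is needed and $\mathbf{f}=f$ is already a double Kodaira fibration in the sense of Definition~\ref{def:double-kodaira}, cf.\ Remark~\ref{rmk:sdks-characterization}.

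Next, I would pass from the complex picture to the underlying real $4$-manifold $M$. Since $S$ is a smooth complex projective surface, $M$ is a closed, smooth $4$-manifold, canonically oriented by the complex structure, and the topological signature of $M$ equals the Hodge-theoretic signature $\sigma(S)=16$ by the Hodge index theorem. Composing $\mathbf{f}$ with the two natural projections $\pi_i\colon \Sigma_2\times \Sigma_2\to \Sigma_2$ produces two Kodaira fibrations $f_i\colon S\to \Sigma_2$, each a smooth, proper, surjective holomorphic submersion with connected fibres of genus $41$. Ehresmann's fibration theorem then guarantees that each $f_i$, viewed as a smooth map $M\to \Sigma_2$, is a locally trivial differentiable fibre bundle with typical fibre a closed orientable real surface of genus $41$ over a base of genus $2$.

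Finally, I would observe that these two bundle structures on $M$ are genuinely distinct: the maps $f_1$ and $f_2$ have different kernels $K_2$ and $K_1$ in $\pi_1(S)$ (coming from the two short exact sequences \eqref{eq:K1K2} with $K_1=K_2=G$ in our case but with the roles of the generators interchanged), and, more concretely, the two Kodaira fibrations cannot coincide because their fibres $F_1$, $F_2$ through a generic point of $S$ correspond to the preimages of two different projections and have intersection number $[F_1]\cdot [F_2]= \deg(\mathbf{f})=|G|/n>0$ on $S$. Since there is essentially nothing to prove beyond invoking Theorem~B and Ehresmann, I do not foresee a real obstacle; the only minor point worth spelling out is the distinctness of the two bundle structures, which the argument above handles.
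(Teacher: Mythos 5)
Your proposal is correct and follows essentially the same route as the paper, which states Theorem~\ref{thm:new-Kirby} as a direct consequence of Theorem~\ref{thm:main-geometric} together with the standard fact that a holomorphic Kodaira fibration is, by Ehresmann's theorem, a differentiable surface bundle over its base. The only slip is the identity $\deg(\mathbf{f})=|G|/n$: the degree of the Galois cover is $|G|=32$, not $|G|/n=16$, but this does not affect your distinctness argument, which only needs $[F_1]\cdot[F_2]>0$.
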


Theorem \ref{thm:main-geometric} also implies  the following partial answer
to \cite[Question 3.20]{CaPol19}.

\begin{corollary} \label{cor:min-genus-signature}
  Let $g_{\mathrm{min}}$ and $\sigma_{\mathrm{min}}$ be the minimal
  possible
  fibre genus and signature for a double Kodaira fibration $f \colon
  S \to
  \Sigma_2 \times \Sigma_2$. Then we have
  \begin{equation}
    g_{\mathrm{min}} \leq 41, \quad \sigma_{\mathrm{min}} \leq 16.
  \end{equation}
\end{corollary}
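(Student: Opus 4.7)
The plan is to simply read off upper bounds from the explicit examples already constructed in Theorem~\ref{thm:main-geometric}. By that theorem, for either of the two extra-special groups $G$ of order $32$, any diagonal double Kodaira structure of type $(b,n)=(2,2)$ on $G$ yields, via the Galois cover $\mathbf{f}\colon S\to\Sigma_2\times\Sigma_2$, a genuine double Kodaira fibration $f\colon S\to\Sigma_2\times\Sigma_2$ whose two induced Kodaira fibrations $f_i\colon S\to\Sigma_2$ satisfy $b_1=b_2=2$, $g_1=g_2=41$, and $\sigma(S)=16$.

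Thus, the first step is to invoke Theorem~\ref{thm:main-algebraic}(1), which guarantees the existence of such a structure (in fact, a large number of them), and to note that these structures are strong, so that by Remark~\ref{rmk:sdks-characterization} no Stein factorization is required and the Galois cover $\mathbf{f}$ coincides with $f$. The second step is to apply the fibre genus formula \eqref{eq:expression-gFi} with $|G|=32$, $m_1=m_2=1$, $b=2$, $n=2$, and the signature formula \eqref{eq:slope-signature-S-G} with the same parameters; substituting one obtains $2g_i-2 = 32(2 + 1/2) = 80$, hence $g_i = 41$, and $\sigma(S) = \tfrac{1}{3}\cdot 32\cdot 2\cdot \tfrac{3}{4} = 16$.

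The final step is to observe that, by the very definition of $g_{\min}$ and $\sigma_{\min}$ as minima over all double Kodaira fibrations of $\Sigma_2\times\Sigma_2$, the existence of a single such fibration with fibre genus $41$ and signature $16$ immediately yields the inequalities $g_{\min}\le 41$ and $\sigma_{\min}\le 16$. There is no real obstacle here: the whole content of the corollary is that Theorem~\ref{thm:main-geometric} produces an explicit witness realizing these numerical invariants, and the statement is merely a packaging of that witness as an upper bound for the two a priori unknown minima. The only subtle point worth mentioning is that the notion of ``double Kodaira fibration'' in the corollary is not required to be of diagonal type, so the bounds $g_{\min}\le 41$ and $\sigma_{\min}\le 16$ are genuine upper bounds over a strictly larger class than the one where the examples were constructed; this is exactly why the statement is phrased as a one-sided inequality.
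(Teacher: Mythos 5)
Your proposal is correct and is essentially the paper's own (implicit) argument: the corollary is stated as an immediate consequence of Theorem \ref{thm:main-geometric}, whose explicit examples with $b_1=b_2=2$, $g_1=g_2=41$ and $\sigma(S)=16$ serve as witnesses for the upper bounds. Your numerical checks via \eqref{eq:expression-gFi} and \eqref{eq:slope-signature-S-G} are accurate, and your closing remark about the bounds holding over the larger class of not-necessarily-diagonal fibrations matches the paper's discussion following the corollary.
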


In fact, it is an interesting question whether $16$ and $41$ are the minimum
possible values
for the signature and the fibre genus of a (non necessarily diagonal)
double Kodaira fibration
$f \colon S \to \Sigma_2 \times \Sigma_2$, but we will not address this
problem here.

\begin{remark} \label{rmk:comparison-with-loenne}
  Constructing (double) Kodaira fibrations with small signature is
  a rather difficult problem. As far as we know, before our work the only
  examples with signature $16$ were the one described in \cite[Theorem 1.1]{BD02} and the ones listed in \cite[Table 3, Cases
      6.2, 6.6,
    6.7 (Type 1), 6.9]{LLR17}. The examples provided by Theorem
  \ref{thm:main-geometric}
  are new, since both the base genera and the fibre genera are different. Note
  that our results also show that \emph{every} curve of genus $2$
  (and not only some special curve with extra automorphisms)
  is the base of a double Kodaira fibration with signature $16$. Thus,
  we obtain two families
  of dimension $3$ of such fibrations that, to the best of our knowledge,
  provide the first examples of a positive-dimensional families
  of double Kodaira fibrations with small signature.
\end{remark}

\section*{Acknowledgements}
F. Polizzi was partially supported by GNSAGA-INdAM. He thanks Andrea Causin
for drawing the figures and Z\"{o}nke Rollenske for answering some of his
questions via e-mail. Both authors thank the anonymous referee for constructive criticism and suggestions that helped to improve the presentation of these results. They are also grateful
to Ian Agol, Yves de Cornulier, ``Jonathan", Derek Holt, Max Horn, Moishe
Kohan, Roberto Pignatelli, ``Primoz", Geoff Robinson, John Shareshian,
Remy van Dobben de Bruyn, Will Sawin  for their precious answers and comments
in the MathOverflow threads \\ \\
\noindent \url{https://mathoverflow.net/questions/357453} \\
\url{https://mathoverflow.net/questions/366044} \\
\url{https://mathoverflow.net/questions/366771} \\
\url{https://mathoverflow.net/questions/368628} \\
\url{https://mathoverflow.net/questions/371181} \\
\url{https://mathoverflow.net/questions/379272} \\
\url{https://mathoverflow.net/questions/380292} \\
\url{https://mathoverflow.net/questions/390447}

\newpage

\section*{Appendix. Non abelian groups of order \texorpdfstring{$24$}{24}
  and \texorpdfstring{$32$}{32}} \label{Appendix_A}

\begin{table}[H]
  \begin{center}
    \begin{tabularx}{\textwidth}{@{}ccc@{}}
      \hline
      $\mathrm{IdSmallGroup}(G)$ & $G$ &
      $\mathrm{Presentation}$ \\
      \hline \hline
      $G(24, \, 1)$ & $\mathsf{D}_{8, \, 3, \, -1}$ &
      $\langle x, \, y \; | \;
      x^8=y^3=1, \, xyx^{-1}=y^{-1} \rangle$	\\
      \hline
      $G(24, \, 3)$ & $\mathsf{SL}(2, \, \mathbb{F}_3)$
      & $\langle x, \, y, \,
      z \; | \;  \, x^3=y^3=z^2=xyz \rangle$ \\
      \hline
      $G(24, \, 4)$ & $\mathsf{Q}_{24}$ & $\langle x, \,
      y, \, z \; | \;  \,
      x^6=y^2=z^2=xyz \rangle$ \\
      \hline
      $G(24, \, 5)$ & $\mathsf{D}_{2, \, 12, \, 5}$ &
      $\langle x, \, y \; | \;
      x^2=y^{12}=1, \, xyx^{-1}=y^5 \rangle$ \\
      \hline
      $G(24, \, 6)$ & $\mathsf{D}_{24}$  &  $\langle x,
      \, y \; | \; x^2=y^{12}=1,
      \, xyx^{-1}=y^{-1} \rangle$ \\
      \hline
      $G(24, \, 7)$ & $\mathbb{Z}_2 \times \mathsf{D}_{4,
      \, 3, \, -1}$
      &  $\langle z \; | \; z^2 =1 \rangle \times \langle
      x, \, y \; | \; x^4=y^3=1,
      \, xyx^{-1}=y^{-1} \rangle$\\
      \hline
      $G(24, \, 8)$ & $((\mathbb{Z}_2)^2 \times \mathbb{Z}_3
      )\rtimes \mathbb{Z}_2$
      & $\langle x,\, \,  y,\, z, \, w \; | \;
      x^2=y^2=z^2=w^3=1,$ \\
      $$ & $$
      & $[y,z]=[y,w]=[z,w]=1,$
      \\
      $$ & $$ & $xyx^{-1}=y,\; xzx^{-1}=zy, \;
      xwx^{-1}=w^{-1} \rangle$ \\
      \hline
      $G(24, \, 10)$ & $\mathbb{Z}_3 \times \mathsf{D}_8$
      & $\langle z \; | \;
      z^3=1 \rangle \times \langle x, \, y \; | \;  x^2=y^4=1, \,
      xyx^{-1}=y^{-1} \rangle$	 \\
      \hline
      $G(24, \, 11)$ & $\mathbb{Z}_3 \times \mathsf{Q}_8$
      & $\langle z \; | \;
      z^3=1 \rangle \times \langle i, \, j, \, k \; | \;
      i^2=j^2=k^2=ijk \rangle$ \\
      \hline
      $G(24, \, 12)$ & $\mathsf{S}_4 $ & $\langle x, \,
      y \; | \; x=(12), \,
      y=(1234) \rangle$  \\
      \hline
      $G(24, \, 13)$ & $\mathbb{Z}_2 \times \mathsf{A}_4 $
      & $\langle z \; | \;
      z^2 =1 \rangle \times \langle x, \, y \; | \;
      x=(12)(34), y=(123) \rangle$ \\
      \hline
      $G(24, \, 14)$ & $(\mathbb{Z}_2)^2 \times
      \mathsf{S}_3$ & $\langle z, \,
      w \; | \; z^2=w^2=[z, \, w]=1 \rangle$ \\
      & & $\times \langle x, \, y \; | \; x=(12), \,
      y=(123) \rangle$ \\
      \hline
    \end{tabularx}
  \end{center}
  \caption[caption]{Nonabelian groups of
    order $24$.\\ \hspace{\textwidth}Source:
  \url{groupprops.subwiki.org/wiki/Groups_of_order_24}}
  \label{table:24-nonabelian}
\end{table}

\begin{table}[H]
  \begin{center}
    \begin{tabularx}{\textwidth}{@{}ccc@{}}
      \hline
      $\mathrm{IdSmallGroup}(G)$ & $G$ &
      $\mathrm{Presentation}$ \\
      \hline \hline
      $G(32, \, 2)$ & $(\mathbb{Z}_4 \times \mathbb{Z}_2)
      \rtimes \mathbb{Z}_4$
      & $\langle x, \, y, \, z \; | \; x^4=y^4=z^2=1,$	\\
      & & $[x, \, y]=z, \, [x, \, z]=[y, \, z]=1 \rangle$\\
      \hline
      $G(32, \, 4)$ & $\mathsf{D}_{4, \, 8, \, 5}$ &
      $\langle x, \, y \; | \;
      x^4=y^8=1, xyx^{-1}=y^5 \rangle$ \\
      \hline
      $G(32, \, 5)$ &  $(\mathbb{Z}_8 \times	\mathbb{Z}_2)
      \rtimes \mathbb{Z}_2$ & $\langle x, \, y, \, z \;|\;
      x^8=y^2=z^2=1, \,
      [x, \, y]=1, $\\
      & & $zxz^{-1}=x^5y, \, zyz^{-1}=y \rangle$  \\
      \hline
      $G(32, \, 6)$ & $(\mathbb{Z}_2)^3  \rtimes
      \mathbb{Z}_4 $ & $\langle x, \,
      y, \, z, \, w \mid x^2 = y^2 = z^2 = w^4 = 1,$ \\
      & &
      $[x, \, y]=1, \, [x, \, z]=1, \, [y,\, z]=1, $ \\
      & & $wxw^{-1} = x, \, wyw^{-1} = xy, \, wzw^{-1}
      = yz \rangle$  \\
      \hline
      $G(32, \, 7)$ & $(\mathbb{Z}_8 \rtimes \mathbb{Z}_2)
      \rtimes \mathbb{Z}_2$
      &  $\langle x, \, y, \, z, \, u, \, w \mid
      y^2=z^2=w^2=1, $\\
      & & $u^2=w^{-1}, \, x^2=u, \, (yz)^2=1, \,
      (yu^{-1})^2=1,$ \\
      & & $uzu^{-1}=z^{-1}, \, xyzx^{-1}=y^{-1} \rangle$ \\
      \hline
      $G(32, \, 8)$ & $(\mathbb{Z}_2)^2  \,. \,(\mathbb{Z}_4
      \times \mathbb{Z}_2)  $ & $\langle x, \, y, \,
      z \mid x^8=y^2=1, \,
      z^2=x^4,$ \\
      & & $xy=yx^5, \, [y, \, z]=1, \, xz=zxy^{-1}
      \rangle$ \\
      \hline
      $G(32, \, 9)$ & $(\mathbb{Z}_8 \times  \mathbb{Z}_2)
      \rtimes \mathbb{Z}_2$ & $\langle x, \, y, \, z \;|\;
      x^8=y^2=z^2=1, \,
      [x, \, y]=1, $\\
      & & $zxz^{-1}=x^3y, \, zyz^{-1}=y \rangle$  \\
      \hline
      $G(32, \, 10)$ & $\mathsf{Q}_8	 \rtimes
      \mathbb{Z}_4$ & $\langle i, \, j,
      \, k, \, x \mid
      i^2=j^2=k^2=ijk, \, x^4=1,$ \\
      & & $xix^{-1}=j, \, xjx^{-1}=i, \, xkx^{-1}=k^{-1}
      \rangle$ \\
      \hline
      $G(32, \, 11)$ & $(\mathbb{Z}_4)^2  \rtimes
      \mathbb{Z}_2
      $ &  $\langle x, \, y, \, z \mid x^4=y^4=[x, \,
      y]=1, \, z^2=1,$	\\
      & & $zxz^{-1}=y, \, zyz^{-1}=x \rangle$ \\
      \hline
      $G(32, \, 12)$ & $\mathsf{D}_{8, \, 4, \, 3}$ &
      $\langle x, \, y \; | \;
      x^8=y^4=1, xyx^{-1}=y^3 \rangle$	\\
      \hline
      $G(32, \, 13)$ & $\mathsf{D}_{4, \, 8, \, 3}$ &
      $\langle x, \, y \; | \;
      x^4=y^8=1, xyx^{-1}=y^3 \rangle$	\\
      \hline
      $G(32, \, 14)$ & $\mathsf{D}_{4, \, 8, \, -1}$ &
      $\langle x, \, y \; | \;
      x^4=y^8=1, xyx^{-1}=y^{-1} \rangle$ \\
      \hline
      $G(32, \, 15)$ & $\mathbb{Z}_4	 \, . \,
      \mathsf{D}_8 $ & $\langle x, \,
      y, \, z, \, u, \, w \mid w^2=1, \, z^2=u^2=w^{-1},
      $ \\
      & & $x^2=u, \, y^2=z, \, xzx^{-1}=z^{-1}, $ \\
      & &  $[y, \, u]=1, \, xyxu=y^{-1} \rangle$ \\
      \hline
      $G(32, \, 17)$ & $\mathsf{D}_{2, \, 16, \, 9}$&
      $\langle x, \, y \mid
      x^2=y^{16}=1, \, xyx^{-1}=y^9 \rangle$ \\
      \hline
      $G(32, \, 18)$ & $\mathsf{D}_{32} $ & $\langle x,
      \, y \mid x^2=y^{16}=1,
      xyx^{-1}=y^{-1} \rangle$ \\
      \hline
      $G(32, \, 19)$ & $\mathsf{QD}_{32} $ & $\langle x,
      \, y \mid x^2=y^{16}=1,
      xyx^{-1}=y^7 \rangle$ \\
      \hline
      $G(32, \, 20)$ & $\mathsf{Q}_{32} $ & $ \langle x,
      \, y, \, z \mid
      x^8=y^2=z^2=xyz \rangle$\\
      \hline
      $\color{black}{G(32, \, 22)}$ &
      $\color{black}{\mathbb{Z}_2 \times
	((\mathbb{Z}_4
      \times \mathbb{Z}_2) \rtimes \mathbb{Z}_2) }$ &
      $\langle w \mid w^2=1 \rangle
      \times $\\
      & & $\langle x, \, y, \, z \mid x^4=y^2=z^2=1, \,
      [x, \, y]=1, $ \\
      & & $zxz^{-1}=xy, \, zyz^{-1}=y \rangle$ \\
      \hline
      $G(32, \, 23)$ & $\mathbb{Z}_2 \times \mathsf{D}_{4,
      \, 4, \, 3}$ &
      $\langle z \mid z^2=1 \rangle \times \langle x, \,
      y \mid x^4=y^4=1, \,
      xyx^{-1}=y^3 \rangle$ \\
      \hline
      $\color{black}{G(32, \, 24)}$ &
      $\color{black}{(\mathbb{Z}_4)^2
      \rtimes \mathbb{Z}_2 }$ & $\langle x, \, y, \,
      z \mid x^4=y^4=z^2=1,  $	\\
      & & $[x, \, y]=1, \, zxz^{-1}=x, \, zyz^{-1}=x^2y
      \rangle$ \\
      \hline
      $\color{black}{G(32, \, 25)}$ &
      $\color{black}{\mathbb{Z}_4 \times
      \mathsf{D}_8}$
      & $\langle z \mid z^4=1 \rangle \times \langle x,
      \, y \mid x^2=y^4=1, \,
      xyx^{-1}=y^{-1} \rangle$ \\
      \hline
      $\color{black}{G(32, \, 26)}$ &
      $\color{black}{\mathbb{Z}_4 \times
      \mathsf{Q}_8}
      $ & $\langle z \mid z^4=1 \rangle \times \langle i,
      \, j, \, k \mid
      i^2=j^2=k^2=ijk  \rangle$ \\
      \hline
      $\color{black}{G(32, \, 27)}$ &
      $\color{black}{(\mathbb{Z}_2)^3  \rtimes
      (\mathbb{Z}_2)^2}
      $ & $\langle x, \, y, \, z, \, a, \, b \mid $ \\
      & & $x^2=y^2=z^2=a^2=b^2=1, $ \\
      & & $[x, \, y]=[y, \, z]=[x, \, z]=[a, \, b]=1,$ \\
      & & $axa^{-1}=x, \, aya^{-1}=y, \, aza^{-1}=xz, $\\
      & & $bxb^{-1}=x, \, byb^{-1}=y, \, bzb^{-1}=yz
      \rangle$\\
      \hline
      $G(32, \, 28)$ & $(\mathbb{Z}_4 \times
      (\mathbb{Z}_2)^2) \rtimes \mathbb{Z}_2$
      & $\langle x, \, y, \, z, \, w \mid x^4=y^2=z^2=w^2=1,
      $  \\
      & & $[x, y]=[x, \, z]=[y, \, z]=1, $\\
      & & $wxw^{-1}=x^{-1}, \, wyw^{-1}=z, \, wzw^{-1}=y
      \rangle$ \\
      \hline
      $G(32, \, 29)$ & $(\mathbb{Z}_2 \times \mathsf{Q}_8)
      \rtimes  \mathbb{Z}_2 $
      &  $\langle x, \, i, \, j, \, k, \, z \mid x^2=z^2=1,
      \, i^2=j^2=k^2=ijk,$  \\
      & & $[x, \, i]=[x, \, j]=[x, \, k]=1, $ \\
      & & $zxz^{-1}=x, \, ziz^{-1}=i, \, zjz^{-1}=xj^{-1}
      \rangle$	\\
      \hline
      $G(32, \, 30)$ & $(\mathbb{Z}_4 \times
      (\mathbb{Z}_2)^2) \rtimes \mathbb{Z}_2$
      & $\langle x, \, y, \, z, \, w \mid x^4=y^2=z^2=w^2=1,
      $  \\
      & & $[x, y]=[x, \, z]=[y, \, z]=1, $\\
      & & $wxw^{-1}=xy, \, wyw^{-1}=y, \, wzw^{-1}=x^2z
      \rangle$ \\
      \hline
    \end{tabularx}
  \end{center}
\end{table}

\newpage

\begin{table}[H]
  \begin{center}
    \begin{tabularx}{\textwidth}{@{}ccc@{}}
      \hline
      $\mathrm{IdSmallGroup}(G)$ & $G$ &
      $\mathrm{Presentation}$ \\
      \hline \hline
      $G(32, \, 31)$ & $(\mathbb{Z}_4)^2  \rtimes
      \mathbb{Z}_2
      $ &  $\langle x, \, y, \, z \mid x^4=y^4=[x, \,
      y]=1, \, z^2=1,$	\\
      & & $zxz^{-1}=xy^2, \, zyz^{-1}=x^2y \rangle$ \\
      \hline
      $G(32, \, 32)$ & $(\mathbb{Z}_2)^2
      \, . \,  (\mathbb{Z}_2)^3$
      & $\langle x, \, y, \, z, \, u, \, w \mid u^2=w^2=1,
      $    \\
      & & $u=z^2, \, u=x^{-2}, \, w=y^{-2}, $\\
      & & $yxy^{-1}=x^{-1}, \, [y, \, z]=1, \, xzxwz=1
      \rangle$ \\
      \hline
      $G(32, \, 33)$ &	$(\mathbb{Z}_4)^2  \rtimes
      \mathbb{Z}_2
      $ &  $\langle x, \, y, \, z \mid x^4=y^4=[x, \,
      y]=1, \, z^2=1,$	\\
      & & $zxz^{-1}=xy^2, \, zyz^{-1}=x^2y^{-1} \rangle$ \\
      \hline
      $G(32, \, 34)$ &	$(\mathbb{Z}_4)^2  \rtimes
      \mathbb{Z}_2
      $ &  $\langle x, \, y, \, z \mid x^4=y^4=[x, \,
      y]=1, \, z^2=1,$	\\
      & & $zxz^{-1}=x^{-1}, \, zyz^{-1}=y^{-1} \rangle$ \\
      \hline
      $G(32, \, 35)$ &	$\mathbb{Z}_4 \rtimes \mathsf{Q}_8$
      & $\langle x, \, i,
      \, j, \, k \mid
      x^4=1, \, i^2=j^2=k^2=ijk,$ \\
      & & $ixi^{-1}=x^{-1}, \, jxj^{-1}=x^{-1}, \,
      kxk^{-1}=x \rangle$ \\
      \hline
      $G(32, \, 37)$ & $(\mathbb{Z}_8 \times	\mathbb{Z}_2)
      \rtimes \mathbb{Z}_2$ & $\langle x, \, y, \, z \;|\;
      x^8=y^2=z^2=1, \,
      [x, \, y]=1, $\\
      & & $zxz^{-1}=x^5, \, zyz^{-1}=y \rangle$  \\
      \hline
      $G(32, \, 38)$ &	$(\mathbb{Z}_8 \times  \mathbb{Z}_2)
      \rtimes \mathbb{Z}_2$ & $\langle x, \, y, \, z \;|\;
      x^8=y^2=z^2=1, \,
      [x, \, y]=1, $\\
      & & $zxz^{-1}=x, \, zyz^{-1}=x^4y \rangle$  \\
      \hline
      $G(32, \, 39)$ & $\mathbb{Z}_2 \times \mathsf{D}_{16}$
      & $\langle z \mid z^2=1
      \rangle \times \langle x, \, y \mid x^2=y^8=1,
      xyx^{-1}=y^{-1} \rangle$	\\
      \hline
      $G(32, \, 40)$ &	$\mathbb{Z}_2 \times
      \mathsf{QD}_{16}$ & $\langle z \mid
      z^2=1 \rangle \times \langle x, \, y \mid x^2=y^8=1,
      xyx^{-1}=y^3 \rangle$  \\
      \hline
      $G(32, \, 41)$ & $\mathbb{Z}_2 \times \mathsf{Q}_{16}
      $ &  $\langle w \mid
      w^2=1 \rangle \times \langle x, \, y, \, z \mid
      x^4=y^2=z^2=xyz \rangle$\\
      \hline
      $G(32, \, 42)$ & $(\mathbb{Z}_8 \times	\mathbb{Z}_2)
      \rtimes \mathbb{Z}_2$ & $\langle x, \, y, \, z \;|\;
      x^8=y^2=z^2=1, \,
      [x, \, y]=1, $\\
      & & $zxz^{-1}=x^3, \, zyz^{-1}=x^4y \rangle$  \\
      \hline
      $G(32, \, 43)$ & $\mathbb{Z}_8 \rtimes
      (\mathbb{Z}_2)^2$ & $\langle x, \,
      y, \, z \mid x^8=1, \, y^2=z^2=[y, \, z]=1,$\\
      & & $yxy^{-1}=x^{-1}, \, zxz^{-1}=x^5 \rangle$ \\
      \hline
      $G(32, \, 44)$ & $(\mathbb{Z}_2 \times \mathsf{Q}_8)
      \rtimes  \mathbb{Z}_2 $
      &  $\langle x, \, i, \, j, \, k, \, z \mid x^2=z^2=1,
      \, i^2=j^2=k^2=ijk,$  \\
      & & $[x, \, i]=[x, \, j]=[x, \, k]=1, $ \\
      & & $zxz^{-1}=xi^2, \, ziz^{-1}=j, \, zjz^{-1}=i
      \rangle$ \\
      \hline
      $G(32, \, 46)$ & $(\mathbb{Z}_2)^2
      \times \mathsf{D}_8$ & $\langle z, \, w \mid
      z^2=w^2=[z, \, w]=1 \rangle$ \\
      &  & $\times \langle x, \, y \mid x^2=y^4=1, \,
      xyx^{-1}=y^{-1} \rangle$	 \\
      \hline
      $G(32, \, 47)$ & $(\mathbb{Z}_2)^2
      \times \mathsf{Q}_8$ & $\langle z, \, w \mid
      z^2=w^2=[z, \, w]=1 \rangle$ \\
      &  & $\times \langle i, \, j, \, k \mid
      i^2=j^2=k^2=ijk  \rangle$  \\
      \hline
      $G(32, \, 48)$ & $(\mathbb{Z}_4 \times
      (\mathbb{Z}_2)^2) \rtimes \mathbb{Z}_2$
      & $\langle x, \, y, \, z, \, w \mid x^4=y^2=z^2=w^2=1,
      $  \\
      & & $[x, y]=[x, \, z]=[y, \, z]=1, $\\
      & & $wxw^{-1}=x, \, wyw^{-1}=y, \, wzw^{-1}=x^2z
      \rangle$ \\
      \hline
      $G(32, \, 49)$ & $\mathsf{H}_5(\mathbb{Z}_2)$ &
      $\langle	\mathsf{r}_1,
      \, \mathsf{t}_1,
      \,\mathsf{r}_2,\, \mathsf{t}_2, \, \z \mid
      \mathsf{r}_{j}^2 =
      \mathsf{t}_{j}^2=\mathsf{z}^2=1,$ \\
      & & $[\mathsf{r}_{j}, \, \mathsf{z}]  =
      [\mathsf{t}_{j}, \, \mathsf{z}]= 1,$ \\
      & & $[\mathsf{r}_j, \, \mathsf{r}_k]= [\mathsf{t}_j,
      \, \mathsf{t}_k] = 1,$ \\
      & & $[\mathsf{r}_{j}, \,\mathsf{t}_{k}] =\mathsf{z}^{-
      \delta_{jk}} \,
      \rangle,$ \quad see \eqref{eq:H5} \\
      \hline
      $G(32, \, 50)$ & $\mathsf{G}_5(\mathbb{Z}_2)$ &
      $\langle \,  \mathsf{r}_1,
      \, \mathsf{t}_1,
      \, \mathsf{r}_2,\, \mathsf{t}_2, \, \z \mid
      ,\mathsf{r}_{1}^2 =
      \mathsf{t}_{1}^2=\mathsf{z}^2=1,$ \\
      & &  $\mathsf{r}_{2}^2 =\mathsf{t}_{2}^2=\mathsf{z},$
      \\
      & & $[\mathsf{r}_{j}, \, \mathsf{z}]  =
      [\mathsf{t}_{j}, \, \mathsf{z}]= 1,$ \\
      & & $[\mathsf{r}_j, \, \mathsf{r}_k]= [\mathsf{t}_j,
      \, \mathsf{t}_k] = 1,$ \\
      & & $[\mathsf{r}_{j}, \,\mathsf{t}_{k}] =\mathsf{z}^{-
      \delta_{jk}} \,
      \rangle,$ \quad see \eqref{eq:G5} \\
      \hline
    \end{tabularx}
  \end{center}
  \caption[caption]{Nonabelian groups of order $32$.\\
    \hspace{\textwidth}Source:
  \url{groupprops.subwiki.org/wiki/Groups_of_order_32}}
  \label{table:32-nonabelian}
\end{table}

\section*{Research Data Policy and Data Availability Statements}
Data sharing not applicable to this article as no datasets were generated or analysed during the current study.

\end{document}